\numberwithin{figure}{section}
\numberwithin{table}{section}
\newtheorem{theorem}{Theorem}[section]
\newtheorem{lemma}[theorem]{Lemma}
\newtheorem{prop}[theorem]{Proposition}
\newtheorem{conjecture}[theorem]{Conjecture}
\newtheorem{cor}[theorem]{Corollary}
\theoremstyle{definition}
\newtheorem{definition}[theorem]{Definition}
\theoremstyle{remark}
\newtheorem{remark}[theorem]{Remark}
\def\G{\Gamma}
\def\t{\widetilde}
\def \sgn{\mathrm{sgn }}
\begin{document}

\title[Cluster algebras from surfaces and extended affine Weyl groups]{Cluster algebras from surfaces \\ and extended affine Weyl groups}
\author{Anna Felikson, John W. Lawson, Michael Shapiro, and Pavel Tumarkin}
\address{Department of Mathematical Sciences, Durham University, Science Laboratories, South Road, Durham, DH1 3LE, UK}
\email{anna.felikson@durham.ac.uk, pavel.tumarkin@durham.ac.uk}
\thanks{The research was partially supported by EPSRC grant EP/N005457/1 (A.F.), EPSRC PhD scholarship (J.W.L.), NSF grant DMS-1702115 and International Laboratory of Cluster Geometry NRU HSE, RF Government grant (M.S.), and Leverhulme Trust grant RPG-2019-153 (P.T.)}

\address{Codeplay Software Ltd, 3 Lady Lawson Street, Edinburgh, EH3 9DR, UK}
\email{john@codeplay.com}

\address{Department of Mathematics, Michigan State University, East Lansing, MI 48824, USA
  \and
National Research University Higher School of Economics, Russia}
\email{mshapiro@math.msu.edu}

\begin{abstract}
  We characterize mutation-finite cluster algebras of rank at least 3 using positive semi-definite quadratic forms. In particular, we associate with every unpunctured bordered surface a positive semi-definite quadratic space $V$, and with every triangulation a basis in $V$, such that any mutation of a cluster (i.e., a flip of a triangulation) transforms the corresponding bases into each other by partial reflections. Furthermore, every triangulation gives rise to an extended affine Weyl group of type $A$, which is invariant under flips. The construction is also extended to exceptional skew-symmetric mutation-finite cluster algebras of types $E$.
  
\end{abstract}

\dedicatory{To the memory of Ernest Borisovich Vinberg}

\maketitle
\setcounter{tocdepth}{1}
\tableofcontents

\section{Introduction and main results}

Cluster algebras were introduced by Fomin and Zelevinsky~\cite{FZ} in an effort to describe dual canonical bases in the universal enveloping algebras of Borel subalgebras of simple complex Lie algebras. {A cluster algebra possesses the distinguished set of generators, \emph{cluster variables}, organized in the groups of the same cardinality called \emph{clusters} which form a combinatorial structure described by an \emph{exchange graph}, where clusters correspond to vertices of the exchange graph. The generators of the neighboring clusters are algebraically dependent, where the corresponding relations are encoded by \emph{exchange matrix} or, equivalently, by an \emph{exchange quiver}.} In the famous paper~\cite{FZ2} {  Fomin and Zelevinsky} obtained Cartan-Killing type classification of all cluster algebras of finite type, i.e. cluster algebras having only finitely many distinct cluster variables. A wider class of cluster algebras is formed by cluster algebras {\em of finite mutation type} which have finitely many exchange matrices (but are allowed to have infinitely many cluster variables). These algebras found various applications, including ones in quantum field theories (see e.g.~\cite{CV,CV2}).

Skew-symmetric cluster algebras {  (i.e., with skew-symmetric exchange matrices)} of finite mutation type were classified in~\cite{FeSTu}: it was shown there that such algebra either has rank at most two, or corresponds to a triangulated surface, or belongs to one of finitely many exceptional mutation classes. 
The approach in~\cite{FeSTu} was based on a computer assisted analysis of the combinatorial  structure of the exchange graph of mutation-finite cluster algebras. This { paper} is written in an effort to develop a more conceptual characterization of the mutation finite phenomenon. { We consider the property of mutation finiteness to be related to the existence} of some positive semi-definite symmetric form similar to the classification of finite type cluster algebras \cite{FZ2} and to the classification of reflection groups of finite and affine types~\cite{Co}.

In the present paper, we follow the path started in \cite{BGZ,Se1,Se,S2}  characterizing mutation-finite cluster algebras of rank at least 3 using associated quadratic forms called {\em quasi-Cartan companions}.

The notion of a quasi-Cartan companion of a skew-symmetric matrix $B$ (or, equivalently, of the corresponding quiver) was introduced in~\cite{BGZ} as a symmetric matrix whose off-diagonal entries have the same moduli as ones of $B$ (we recall  the precise definitions in Section~\ref{semi-def}). It was proved in~\cite{BGZ} that a matrix $B$ defines a cluster algebra of finite type if and only if it has a positive definite quasi-Cartan companion, and all the cycles in the associated to $B$ quiver are cyclically oriented.    

This result was extended to the case of algebras of affine type in~\cite{Se}, where it was proved that a matrix defines a cluster algebra of affine type if and only if it has a positive semi-definite quasi-Cartan companion of corank one satisfying some additional {\em admissibility conditions} (see Section~\ref{adm-sec}).

Our first construction provides the following result (for brevity, we formulate everything in terms of quivers, see Section~\ref{background} for details).

\setcounter{section}{3}
\setcounter{theorem}{13}
\begin{theorem}
Let $Q$ be a connected quiver of finite mutation type with at least $3$ vertices.
Then $Q$ has a positive semi-definite quasi-Cartan companion.
\end{theorem}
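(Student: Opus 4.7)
The plan is to invoke the classification of~\cite{FeSTu}: every connected mutation-finite quiver with at least three vertices is either of finite type, of affine type, associated with a (tagged) triangulation of a bordered surface, or lies in one of finitely many exceptional mutation classes. I would handle these families separately.

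For quivers of finite type there is nothing to do, since~\cite{BGZ} already furnishes a positive definite (in particular positive semi-definite) quasi-Cartan companion. For quivers of affine type the same holds by~\cite{Se}, which provides a positive semi-definite companion of corank one.

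The substantive case is that of surface-type quivers. Given a triangulation $T$ of an unpunctured bordered surface, I would construct a quadratic space $V$ together with a collection of vectors $\{e_\gamma\}$ indexed by the internal arcs of $T$, normalized so that $(e_\gamma,e_\gamma)=2$, $(e_\gamma,e_\delta)=\pm 1$ when $\gamma$ and $\delta$ bound a common triangle of $T$ (with the sign read off from $T$), and $(e_\gamma,e_\delta)=0$ otherwise. By construction the Gram matrix is a quasi-Cartan companion of the signed adjacency matrix $B_T$. The heart of the argument is to show that a flip at $\gamma$ sends the collection $\{e_\delta\}$ for $T$ to the corresponding collection for the flipped triangulation $T'$ via a partial reflection in $e_\gamma$, which is an isometry of $V$. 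Once this invariance is in place, positive semi-definiteness needs only to be checked for a single convenient triangulation — for instance, one identifying $V$ with a natural sublattice of the root lattice of an extended affine Weyl group of type $A$, on which the standard invariant form is manifestly positive semi-definite — and then propagates along flips to every quiver in the mutation class. Punctured surfaces would be handled analogously by replacing arcs with tagged arcs and flips with their tagged counterparts.

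The main obstacle is verifying flip-invariance: one must track the signs of the off-diagonal Gram entries and check that the partial reflection in $e_\gamma$ transforms the remaining $e_\delta$'s precisely according to the signed adjacency rule for $B_{T'}$. This is a local check around the quadrilateral of the flip, but it must be carried out in each of the topologically distinct local configurations (arcs adjacent to the boundary, arcs that form loops in a self-folded triangle, and so on), so some care is needed in assigning signs consistently. For the remaining finite list of exceptional mutation classes, including the $E$-type ones treated elsewhere in the paper, I would exhibit a positive semi-definite companion for one representative per class by direct computation and extend it to every other quiver in the class by partial reflections along flip paths, reducing the argument to a bounded, finite case check.
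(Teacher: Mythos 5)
Your case split (classification into finite/affine/surface/exceptional) matches the paper's, and the finite, affine and exceptional cases are handled essentially as the paper does. The surface case, however, is where your strategy diverges, and there it has a genuine gap. You propose to verify positive semi-definiteness for one convenient triangulation and then propagate it to all others by arguing that a flip acts on the companion basis by a partial reflection. But the mutation of a quasi-Cartan companion is \emph{not} in general a quasi-Cartan companion of the mutated quiver (the paper exhibits a three-vertex counterexample in Fig.~\ref{ex-mut}); making the propagation work requires a compatibility condition at every flip along the path. Establishing that such a globally mutable companion exists is essentially the content of Theorem~\ref{c-ind}, which is the paper's \emph{main} result, is proved only for unpunctured surfaces via the admissibility machinery of Section~\ref{adm-sec}, and cannot be waved through for punctured surfaces: the quiver of the twice-punctured torus (Fig.~\ref{killhope}) admits no fully compatible quasi-Cartan companion at all, so the ``replace arcs by tagged arcs and proceed analogously'' sentence cannot be made to work as stated. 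The same objection applies to your plan of extending the exceptional companions ``by partial reflections along flip paths.''

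The paper avoids all of this by a direct construction that works for \emph{every} triangulation at once, with no propagation: take a Euclidean space with an orthonormal basis $e_1,\dots,e_t$ indexed by the \emph{triangles} of the triangulation, and assign to each arc lying in triangles $T_i$ and $T_j$ the vector $e_i+e_j$ or $e_i-e_j$ according to a small local table (Fig.~\ref{triang}) that handles self-folded triangles and punctures. The Gram matrix of these vectors is a quasi-Cartan companion by a local check inside each triangle, and positive semi-definiteness is automatic because the vectors live in a Euclidean space. To repair your argument you would either need to adopt this direct construction, or prove the flip-compatibility statement for the specific paths you use --- which is a substantially harder theorem than the one being proved.
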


As a corollary, we obtain the following characterization of finite mutation classes.

\begin{cor}
A connected quiver $Q$ with at least three vertices is mutation-finite if and only if every quiver in the mutation class of $Q$ has a positive semi-definite quasi-Cartan companion. 
\end{cor}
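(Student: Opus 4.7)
The plan is to deduce both directions directly from Theorem 3.14 together with one elementary observation about quasi-Cartan companions; there is no serious combinatorial content beyond the theorem.

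For the forward direction, suppose $Q$ is a connected mutation-finite quiver on $n \geq 3$ vertices. Every quiver $Q'$ obtained from $Q$ by a finite sequence of mutations shares the mutation class of $Q$, so $Q'$ is itself mutation-finite; moreover $Q'$ is connected (mutation preserves connectedness, since if $\mu_k(Q)$ were disconnected then $Q = \mu_k(\mu_k(Q))$ would also be disconnected) and has the same number $n \geq 3$ of vertices. Theorem 3.14 therefore applies to each such $Q'$ and produces a positive semi-definite quasi-Cartan companion of $Q'$.

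For the reverse direction I would argue by contrapositive. The key observation is that if $C$ is any quasi-Cartan companion of a quiver $Q'$, then $C_{ii} = 2$ and $|C_{ij}|$ equals the number of arrows between vertices $i$ and $j$ in $Q'$. When $C$ is positive semi-definite, every principal $2 \times 2$ submatrix $\begin{pmatrix} 2 & C_{ij} \\ C_{ij} & 2 \end{pmatrix}$ is positive semi-definite as well, forcing $4 - C_{ij}^2 \geq 0$ and hence $|C_{ij}| \leq 2$. Consequently, if every quiver in the mutation class of $Q$ admits a positive semi-definite quasi-Cartan companion, then every such quiver has at most two arrows between any pair of vertices.

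But on a fixed vertex set of size $n$ there are only finitely many quivers in which each pair of vertices supports at most two arrows (each of the $\binom{n}{2}$ pairs allows only five configurations), so the mutation class of $Q$ must be finite, i.e.\ $Q$ is mutation-finite. I do not anticipate any real obstacle here: all the substance sits inside Theorem 3.14, while the reverse implication rests solely on the observation that positive semi-definiteness of the companion caps the multiplicity of each arrow at $2$, after which a pigeonhole argument finishes the job.
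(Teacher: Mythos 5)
Your proof is correct and follows essentially the same route as the paper: the forward direction is Theorem 3.14 applied across the mutation class, and the reverse direction rests on the observation that positive semi-definiteness of a quasi-Cartan companion forces every arrow multiplicity to be at most $2$ via the $2\times 2$ principal minors. The only difference is that where you finish with the elementary counting argument (there are finitely many quivers on $n$ vertices with all multiplicities at most $2$), the paper instead cites the Derksen--Owen criterion that a mutation-infinite quiver always reaches a quiver with an arrow of weight at least $3$ --- these are contrapositives of the same statement, so your argument simply inlines what the paper delegates to a reference.
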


A quasi-Cartan companion $A$ of a quiver $Q$ can be mutated along with the quiver (we recall the definition given in~\cite{BGZ} and our geometric interpretation of it in Section~\ref{semi-def}). Understanding $A$ as a matrix of a quadratic form on a real vector space, and thus as a Gram matrix of a certain basis (called a {\em companion basis}~\cite{P1,P2}), the mutation corresponds to a change of basis (we call this procedure a {\em mutation of a basis}). However, the mutated matrix $\mu_k(A)$ may not be a quasi-Cartan companion of the mutated quiver $\mu_k(Q)$. A notion of $k$-{\em compatible} companion was introduced in~\cite{BGZ} to guarantee that $\mu_k(A)$ is again a quasi-Cartan companion of $\mu_k(Q)$. We introduce a notion of a {\em fully compatible} companion which is $k$-compatible for every vertex $k$, and thus its mutation in every direction leads to a quasi-Cartan companion of the mutated quiver. We then prove the following result.  

\setcounter{theorem}{16}
\begin{theorem}
Let $Q$ be a mutation-finite quiver with at least $3$ vertices. If $Q$ is not
the quiver shown in Fig.~\ref{killhope} then $Q$ has a fully compatible positive semi-definite quasi-Cartan companion.
\end{theorem}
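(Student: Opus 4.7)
The plan is to upgrade Theorem~3.14 by combining its existence statement with a case analysis guided by the Felikson--Shapiro--Tumarkin classification of mutation-finite quivers of rank at least three, invoked from \cite{FeSTu}. Recall from that classification that any such quiver $Q$ is either the adjacency quiver of a triangulated bordered surface (possibly with punctures) or is mutation equivalent to one of a finite list of exceptional quivers (types $E_6, E_7, E_8$ together with their affine and elliptic analogues, plus $X_6, X_7$). Full compatibility of a quasi-Cartan companion is a sign condition on its off-diagonal entries that must hold, relative to every chordless cycle of $Q$, simultaneously at every vertex; the strategy is therefore to exhibit, in each class, a sign pattern realising this.

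For the surface case I would invoke the geometric construction developed earlier in the paper: a triangulation $T$ yields a companion basis in the positive semi-definite quadratic space $V$ associated with the surface, and the Gram matrix of this basis is a quasi-Cartan companion of the adjacency quiver $Q(T)$. Since flips of $T$ act on companion bases by partial reflections of $V$, mutation of this companion at any vertex still produces a quasi-Cartan companion---of the flipped triangulation---so the companion is fully compatible by construction. This disposes of the (infinite) surface portion of the classification uniformly.

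For the exceptional classes I would pick a standard representative quiver in each mutation class and exhibit a fully compatible positive semi-definite companion explicitly. Because mutating a fully compatible companion at any vertex again yields a fully compatible companion of the mutated quiver (this is the whole point of the definition), it suffices to verify the property on one representative per class. Each class is small enough that the sign choices can be written down directly, using the positive semi-definite companion supplied by Theorem~3.14 as a starting point and adjusting signs, possibly combined with the diagonal gauge of rescaling basis vectors by $\pm 1$.

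The main obstacle, and the reason for the exclusion, is the quiver of Fig.~\ref{killhope}. For this quiver I would show that the sign constraints imposed by $k$-compatibility at different vertices $k$ are globally inconsistent: each chordless cycle through $k$ prescribes a product of signs of off-diagonal entries, and the full system of such constraints admits no simultaneous solution modulo the $\pm 1$ gauge freedom. The hardest part is carrying out this combinatorial obstruction cleanly---ruling out \emph{every} sign assignment on \emph{every} positive semi-definite companion, rather than only the specific one provided by Theorem~3.14---and verifying that the phenomenon is genuinely intrinsic to this quiver and does not occur in any other exceptional class.
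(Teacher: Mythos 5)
There are two genuine gaps, both stemming from the same misconception about how full compatibility behaves under mutation. First, your surface-case argument is circular and its premise is false: the companion basis produced by Theorem~\ref{thm fin mut type} is \emph{not} automatically fully compatible, and flips do not automatically act on it by partial reflections --- Fig.~\ref{ex-mut} in the paper shows a mutation taking a quasi-Cartan companion to something that is not a companion of the mutated quiver, and the excluded quiver of Fig.~\ref{killhope} is itself a surface quiver (the twice-punctured torus), so any argument claiming that ``the companion is fully compatible by construction'' for all surface quivers would prove too much. Full compatibility is precisely the \emph{hypothesis} one needs in order to conclude (via Lemma~\ref{l k-comp}) that mutation yields a companion of the mutated quiver; you cannot derive it from that conclusion. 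The paper instead has to do real work here: it establishes full compatibility for surface quivers by an induction on the number of non-oriented triangles in a block decomposition, using a ``disjoint support'' gluing lemma to assemble companions block by block, with the blocks of types I, III, IV, V handled by separate reductions.

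Second, your reduction of the exceptional classes to ``one representative per class'' rests on the claim that mutating a fully compatible companion at any vertex again yields a fully compatible companion of the mutated quiver. This is false: Lemma~\ref{l k-comp} only guarantees that $\mu_k$ of a \emph{$k$-compatible} companion is $k$-compatible, and the paper exhibits an explicit counterexample to preservation of full compatibility in Fig.~\ref{ex_puncture}. (The statement of Theorem~\ref{thm fully} does not require this propagation anyway --- it is an existence statement for each individual quiver --- but the affine and elliptic exceptional classes are infinite as sets of quivers, so ``write down the sign pattern for each member'' is not a finite check either; the paper handles them via Theorem~\ref{ST} for acyclic representatives and the radical-extension trick of Remark~\ref{ell}.) Your treatment of the excluded quiver of Fig.~\ref{killhope} is essentially right in spirit --- the paper also disposes of it by a finite sign-inspection modulo the $\pm1$ gauge of Remark~\ref{sign} --- but note that since all quasi-Cartan companions of a fixed quiver differ only in signs of off-diagonal entries, one does not need to range over ``every positive semi-definite companion'' separately; the obstruction is purely a sign condition independent of positivity.
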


Although every mutation of a fully compatible positive semi-definite quasi-Cartan companion provided by Theorem~\ref{thm fully} is again a positive semi-definite quasi-Cartan companion, it may not be fully compatible, and thus further mutation may not lead to a quasi-Cartan companion. So, the main question we want to explore is when we are able to mutate a quasi-Cartan companion throughout the whole mutation class of a quiver {  (we call such a quasi-Cartan companion a {\em symmetric twin} of a quiver)}. This is the case, for example, for quivers without oriented cycles: it was proved in~\cite{ST} that a quasi-Cartan companion of such a quiver with all off-diagonal entries being non-positive can be mutated along any mutation sequence to produce a quasi-Cartan companion again.

Our main result is the following. For every unpunctured surface we pick a specific representative from its mutation class of quivers, and then construct a companion basis ${\bf u}$ belonging to an extended affine root system of type $A_{n-n_0}^{[n_0]}$ (we recall the definitions in Section~\ref{eawg}), which agrees with the results of~\cite{CdZ} (where $n$ is the rank of the quiver, and $n_0$ is the dimension of the kernel of the corresponding quadratic form which can be expressed in terms of Euler characteristics of $S$). Then the next theorem states that this collection of roots gives rise to a positive semi-definite {  symmetric twin} for every quiver in the mutation class.     

\setcounter{section}{5}
\setcounter{theorem}{4}
\begin{theorem}
Given a quiver $Q$ constructed from a triangulation of an unpunctured surface, there exists a companion basis ${\bf u}$ for $Q$ such that for any mutation sequence $\mu$ the Gram matrix of vectors $\mu(\bf u)$ is a positive semi-definite quasi-Cartan companion for $\mu(Q)$.   

\end{theorem}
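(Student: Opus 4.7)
The plan is to build the companion basis $\mathbf{u}$ at a fixed triangulation $T$ representing $Q$ inside the extended affine root system of type $A_{n-n_0}^{[n_0]}$ (in agreement with~\cite{CdZ}), and then argue that every flip is realized by a partial reflection which stabilizes both the root system and the companion-basis property, so that positive semi-definiteness propagates automatically along any mutation sequence $\mu$.

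For the initial construction, I would assign to each arc of $T$ a real root $u_i$ of the extended affine root system. The assignment is built locally: each triangle of $T$ contributes an $A_2$-triple of roots summing to zero (modulo null roots), and the gluings along shared arcs paste these local triples into a global basis. The corank $n_0$ is read off from the topology of $S$, and the null directions are used to absorb the "holonomy" carried by non-contractible cycles of the surface, so that the Gram matrix of $\mathbf{u}$ is indeed a positive semi-definite quasi-Cartan companion of $Q$ (its positive semi-definiteness being inherited from the ambient extended affine root space).

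The inductive step is to show that a single flip $\mu_k$, realized on the basis as a partial reflection
\[
u_i \longmapsto u_i - \epsilon_{i,k}(u_i,u_k)\,u_k \quad (i\ne k), \qquad u_k \longmapsto -u_k,
\]
with each $\epsilon_{i,k}\in\{0,1\}$ chosen from the cyclic structure of $Q$ at $k$, sends $\mathbf{u}$ to a companion basis of $\mu_k(Q)$ that again lies inside the same extended affine root system. Containment in the root system is a general property of partial reflections in extended affine Weyl groups, and it immediately yields positive semi-definiteness of the new Gram matrix. The companion-basis identity $|(u_i',u_j')|=|b_{ij}'|$ then reduces to a local check around the flipped arc, comparing the matrix-mutation rule with the change of inner products under the partial reflection. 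Induction on the length of $\mu$ closes the argument, using that for unpunctured surfaces every mutation of $Q$ is realized by a flip of the underlying triangulation.

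The principal obstacle is proving the existence of a coherent global choice of coefficients $\epsilon_{i,k}$ that matches the matrix-mutation rule on all pairs $(i,j)$ simultaneously and remains well-defined after arbitrary compositions of flips. The geometric source of this coherence is the surface itself: the cyclic order around a flipped arc and the orientations of the adjacent triangles provide a canonical distribution of signs, while the null roots of $A_{n-n_0}^{[n_0]}$ supply the "slack" required to reconcile sign choices around non-contractible loops, which would otherwise obstruct full compatibility (as they do in the exceptional quiver of Theorem~\ref{thm fully}). Working through the local configurations that can occur in an unpunctured triangulation---interior flips, flips adjacent to a boundary component, and arrangements of arcs crossing a handle---forms the technical core of the proof and is what ultimately forces the use of the extended (rather than merely affine) root system.
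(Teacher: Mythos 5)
Your initial construction matches the paper's: Section~\ref{triang-sp} also builds ${\bf u}$ inside an extended affine root system of type $A_{n-n_0}^{[n_0]}$ from a specially chosen triangulation, and you are right that once the mutated vectors are known to form a companion basis, positive semi-definiteness is automatic, since mutation is just a change of basis in the same quadratic space (Remark~\ref{geometric realisation}). The gap is in the inductive step. First, there is no freedom in the coefficients $\epsilon_{i,k}$: the definition of companion mutation forces $\epsilon_{i,k}=1$ exactly when $b_{ik}>0$, so there is no ``coherent global choice'' to be made at mutation time. The question is whether the \emph{forced} partial reflection produces a companion basis of $\mu_k(Q)$, and this can genuinely fail (Fig.~\ref{ex-mut}). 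Second, the failure is not detected by ``a local check around the flipped arc'': the entries that can go wrong are $a'_{ij}$ for $i,j\ne k$, and whether they match $|b'_{ij}|$ is governed by sign conditions on \emph{chordless cycles} of the mutated quiver passing near $k$ --- these cycles can have arbitrary length and be non-contractible on the surface, so no bounded local analysis suffices.

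What is missing is precisely the mechanism the paper supplies: Seven's \emph{admissibility} condition (a sign condition on all chordless cycles) and a proof that it is preserved under every single mutation (Proposition~\ref{adm}). The paper's argument takes an arbitrary chordless cycle $Q'_c$ of $\mu_k(Q)$, forms $\t Q'_c=Q'_c\cup\{k\}$, and uses surface topology (cutting along the arc for $k$ turns $\t Q'_c$ into an annulus or polygon plus one gluing or one attached triangle) to show that $\t Q'_c$ is of finite or affine type, or comes from a $3$-holed sphere or a one-holed torus. The affine and finite cases are settled by Seven's results, and the two exceptional small surfaces by the explicit case analysis of Lemma~\ref{ell-inv}. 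Your appeal to ``null roots supplying slack around non-contractible loops'' does not substitute for this: the null directions play no role in whether $|(u_i',u_j')|=|b'_{ij}|$ holds, and without an invariant such as admissibility that propagates along the whole mutation class, the induction on the length of $\mu$ cannot close.
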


{   
Moreover, we claim in Corollary~\ref{all-un} that the choice of such basis ${\bf u}$ is essentially unique up to a linear isometry and sign changes of vectors in ${\bf u}$.
}

We approach Theorem~\ref{c-ind} in two different ways. The first one is through the groups constructed in~\cite{FeTu} by quivers originating from unpunctured surfaces. With any such quiver we can then associate two groups, whose generators are involutions indexed by vertices of the quiver: a group $G$ constructed in~\cite{FeTu} (which is a quotient of a certain Coxeter group), and an extended affine Weyl group $W$ generated by reflections. These groups turned out to be related in the following way.       

\setcounter{section}{5}
\setcounter{theorem}{0}
\begin{theorem}
There exists a surjective homomorphism  $\varphi: G\to W$ taking generating involutions of $G$ to generating reflections of $W$.
\end{theorem}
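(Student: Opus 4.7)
The plan is to exhibit $\varphi$ on generators and then verify that the defining relations of $G$ are satisfied by the images. Recall that $G$ is presented by involutions $s_v$ indexed by the vertices of $Q$, modulo a list of relations read off from the triangulated surface (Coxeter-type relations for each pair of vertices, together with extra surface/cycle relations from \cite{FeTu}). On the $W$-side, Theorem~5.5 supplies a companion basis $\mathbf{u}=\{u_v\}$ carrying a positive semi-definite form, and $W$ is by definition generated by the corresponding reflections $r_{u_v}$. So I set
\[
\varphi(s_v) := r_{u_v}
\]
and the task reduces to checking that every relation of $G$ maps to the identity of $W$, since surjectivity of $\varphi$ is then immediate from the fact that the $r_{u_v}$ generate $W$.

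Because each $r_{u_v}$ is an involution, the relations $s_v^2 = 1$ are automatic. For the order of a pairwise product $r_{u_v}r_{u_w}$, I would use that the Gram matrix of $\mathbf{u}$ is a positive semi-definite quasi-Cartan companion of $Q$: the entry $(u_v,u_w)$ lies in the finite list of values $\{0, \pm 1/2, \pm 1\}$ controlled by the edge between $v$ and $w$ (absent, simple, or part of a higher-weight subquiver), which forces $r_{u_v}r_{u_w}$ to have exactly the order prescribed by the corresponding relation in $G$. Hence all pairwise Coxeter-type relations of $G$ are realized in $W$.

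The core of the argument is then to check the surface-type relations of $G$ beyond the pairwise ones — the cycle relations attached to the triangles of the triangulation and to non-contractible loops. Here I would use that each such cycle relation in $G$ corresponds, in the geometric model, to a closed path of flips, and by Theorem~5.5 the Gram matrix is mutation-invariant along any sequence of flips. So the composition of the corresponding reflections returns the basis $\mathbf{u}$ to itself (up to the sign changes permitted by Corollary~\ref{all-un}), and the verification of each relation in $W$ reduces to a direct computation in the finite-dimensional reflection representation built from $\mathbf{u}$. Going through the relations of $G$ listed in~\cite{FeTu} one by one and matching them with the reflection identities satisfied by the $r_{u_v}$ gives the well-definedness of $\varphi$.

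The main obstacle I anticipate is the bookkeeping for the cycle relations: one has to rule out the possibility that the image of a relation in $G$ is a nontrivial sign change rather than the identity in $W$. I would handle this by choosing, for each mutation class, the specific representative used in the construction of $\mathbf{u}$, pinning down the signs of the basis vectors once and for all, and then checking the closing condition around each elementary triangle and around the generators of $\pi_1$ of the surface. Once this is done for the generating relations, all remaining cycle relations follow by compositions, and surjectivity of $\varphi$ is clear from the definition of $W$.
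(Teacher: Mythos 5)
Your overall strategy --- define $\varphi$ on generators by $s_v\mapsto r_{u_v}$, observe that surjectivity is automatic since the $r_{u_v}$ generate $W$, and reduce everything to checking that the defining relations of $G$ hold among the reflections --- is exactly the paper's, and your treatment of (R1) and (R2) is essentially fine (modulo the slip that the off-diagonal Gram entries of a companion basis are $0,\pm1,\pm2$ rather than $0,\pm 1/2,\pm1$; with the normalization $\langle u_v,u_v\rangle=2$ one still gets order $2$ or $3$ for $r_{u_v}r_{u_w}$ in the cases covered by (R2)).

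The gap is in your treatment of the cycle and handle relations (R3)--(R5). First, these relations are fixed words such as $(s_1\,s_2s_3s_2)^2$ attached to specific subquivers of $Q$ (Fig.~\ref{rel}); they are not ``closed paths of flips,'' and Theorem~\ref{c-ind} --- which describes how the companion basis transforms under mutation --- says nothing about whether such a word acts as the identity. Second, and more seriously, even a complete verification that the word fixes each $u_v$ (equivalently, returns the basis to itself up to signs) would not prove the relation: $W$ acts on $V\oplus V_0^*$, and an element of $W$ can fix $V$ pointwise while translating the vectors $\delta_j^*$ --- this is precisely how the infinite ``translation'' part of an extended affine Weyl group arises. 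So the failure mode is not merely ``a nontrivial sign change,'' as you suggest, and no argument that only sees the Gram matrix of $\mathbf{u}$ can rule it out. The paper closes this by writing the five relevant reflections as explicit $8\times 8$ matrices in a basis that includes the coordinates $\delta_1^*,\delta_2^*$ and multiplying them out (see the appendix), and by citing Saito--Takebayashi for the $\widetilde A_3$-relation (R4). Your proposal defers exactly this computation, and the mechanism you offer in its place does not substitute for it.
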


In finite and affine types the groups $G$ and $W$ are actually isomorphic, which gives rise to the following conjecture.

\begin{conjecture}
The map $\varphi$ in Theorem~\ref{homo} is an isomorphism.
  \end{conjecture}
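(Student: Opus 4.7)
The plan is to construct an explicit inverse homomorphism $\psi: W \to G$ and thereby promote the surjection $\varphi$ of Theorem~\ref{homo} to an isomorphism. Since $\varphi$ sends the generating involutions of $G$ bijectively onto the generating reflections of $W$, the natural candidate for $\psi$ is defined on generators by the inverse correspondence and extended multiplicatively. Such a $\psi$ is a well-defined homomorphism precisely when every defining relation of $W$ is satisfied by the corresponding generators of $G$. Once this is verified, $\psi\circ\varphi$ and $\varphi\circ\psi$ agree with the identity on generators and hence on the whole groups, yielding $G\cong W$. The whole task therefore reduces to lifting a presentation of $W$ to $G$.

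To carry this out, I would first fix a concrete presentation of the extended affine Weyl group $W(A_{n-n_0}^{[n_0]})$ in which the generators are the reflections in the companion basis vectors ${\bf u}$ produced by Theorem~\ref{c-ind}. The defining relations split into two classes. The first class consists of pairwise Coxeter-type relations (commutativity and braid relations) read off from the inner products of the basis vectors: because $G$ is built in \cite{FeTu} as a quotient of a Coxeter group whose Coxeter data comes from the same triangulation that yields ${\bf u}$, these pairwise relations should match almost tautologically. The second class consists of the \emph{global} relations that encode the translation lattice of $W$ and the additional cyclic factors responsible for the ``extended'' part; these involve long products of generators and carry the actual content of the conjecture.

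To verify the global relations inside $G$, I would exploit the mutation-invariance of the companion basis provided by Theorem~\ref{c-ind}: a long word in the generators that acts trivially on ${\bf u}$, and hence is trivial in $W$, corresponds to a closed loop of flips of the triangulation. Using the combinatorial description of such loops developed in \cite{FeTu}, one aims to reduce them to the identity using only the relations already present in $G$. The finite and affine cases, known to give isomorphisms, would serve as the base of an induction on the topological complexity of the surface, where the inductive step is carried out by cutting along an arc and reducing the problem to a sub-surface whose quiver sits inside the original one as a mutation-compatible full subquiver.

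The main obstacle is precisely this last verification: the global relations have no local origin in the triangulation and need not follow automatically from the Coxeter presentation of \cite{FeTu} alone. An attractive alternative is to prove directly that the tautological action of $G$ on the quadratic space $V$ through the companion basis is \emph{faithful}: faithfulness would immediately imply that $G$ embeds into the group generated by the corresponding partial reflections, which by construction is $W$, and combined with the surjectivity of $\varphi$ this forces $G\cong W$. However, the quadratic form on $V$ is only positive semi-definite (of corank $n_0\ge 1$) and $G$ carries additional relations beyond the Coxeter ones, so proving faithfulness here is considerably more delicate than the classical faithfulness of the Tits representation, and this is where I expect the bulk of the technical work to lie.
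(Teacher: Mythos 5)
The statement you are attempting is Conjecture~\ref{iso}, which the paper explicitly leaves \emph{open}: the authors state ``As we do not have a proof of Conjecture~\ref{iso}, we prove Theorem~\ref{c-ind} in a different way,'' and they only verify the isomorphism in the finite and affine cases (Remark~\ref{aff}) and for the elliptic types $E_6^{(1,1)}$, $E_7^{(1,1)}$, $E_8^{(1,1)}$ by comparing known presentations (Remark~\ref{iso-el}). Your proposal is likewise not a proof: it is a strategy outline whose decisive step --- verifying that the global relations of $W$ (those encoding the translation lattice and the extended part) hold among the generators $s_i$ of $G$, or equivalently that the representation $G\to \mathrm{GL}(V\oplus V_0^*)$ is faithful --- is exactly the content of the conjecture, and you candidly leave it unresolved. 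Identifying the difficulty is not the same as overcoming it; as written, both of your routes (lifting a presentation of $W$, or proving faithfulness) terminate at the point where the actual mathematics would have to begin.

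Two further cautions on the details of your sketch. First, the correspondence between the pairwise Coxeter data of $G$ and the inner products of $\mathbf{u}$ is not quite ``tautological'': $G$ is not a Coxeter group in general but a quotient by the cycle, $\widetilde A_3$, and handle relations (R3)--(R5), and one must check that these extra relations of $G$ do not collapse the group below $W$ --- that is precisely what Theorem~\ref{homo} establishes (surjectivity plus well-definedness of $\varphi$), and it gives no information in the reverse direction. Second, your proposed induction by cutting along arcs mirrors the amalgamated-product argument of Lemma~\ref{2}, but that argument controls $G$ only; to make it prove the conjecture you would need a matching amalgamated decomposition of the extended affine Weyl group $W$ along the corresponding parabolic-type subgroups, which is not supplied by the paper and is not obviously available, since the ``extended'' part of $W$ (the action of the radical $V_0$ and its dual) is a global feature that does not localize to sub-surfaces. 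If you want to make partial progress, the most tractable concrete target suggested by the paper is to push the presentation-comparison of Remark~\ref{iso-el} (using the elliptic Weyl group presentations of Saito--Takebayashi) to surfaces of higher genus or with more boundary components, but this remains open.
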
 

  We note that both groups $G$ and $W$ can also be defined for all $9$ exceptional mutation-finite classes of types $E$ (see Fig.~\ref{exceptional-fig}), and the conjecture holds in these cases (see Remark~\ref{iso-el}).

Assuming Conjecture~\ref{iso}, Theorem~\ref{c-ind} holds easily (see Prop.~\ref{ind}).

As we do not have a proof of Conjecture~\ref{iso}, we prove Theorem~\ref{c-ind} in a different way by using the notion of an {\em admissible} quasi-Cartan companion introduced in~\cite{Se}. The admissibility condition is stronger than full compatibility, so Theorem~\ref{c-ind} can be deduced from the following result which we prove in Section~\ref{adm-sec}.
  
\setcounter{section}{6}
\setcounter{theorem}{1}
\begin{prop}\footnote{While preparing the paper we were informed by Ahmet Seven that he has obtained an independent proof of Proposition~\ref{adm}.}
Given a quiver $Q$ constructed from a triangulation of an unpunctured surface, there exists an admissible quasi-Cartan companion $A$ of $Q$ such that for any mutation sequence $\mu$ the matrix $\mu(A)$ is an admissible quasi-Cartan companion of $\mu(Q)$. { In particular, $A$ is a symmetric twin of $Q$.} 
\end{prop}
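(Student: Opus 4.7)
The plan is to reduce the statement to a local verification: choose a concrete starting triangulation $T_0$ of the surface $S$ together with an explicit quasi-Cartan companion $A_0$ for the quiver $Q_0=Q(T_0)$, check by hand that $A_0$ is admissible, and then propagate admissibility along every flip by a bounded case analysis. Since admissibility (as recalled in Section~\ref{adm-sec}) is a condition on the signs of the entries of $A$ restricted to the chordless cycles of $Q$, and in a quiver coming from an unpunctured triangulation the chordless cycles come in a very restricted list (oriented $3$-cycles arising from interior triangles, together with certain $4$- and $5$-cycles around self-folded-free configurations), the verification is essentially combinatorial.

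Concretely, I would first exploit Theorem~\ref{c-ind}: take the companion basis ${\bf u}$ produced in Section~\ref{eawg} inside the extended affine root system of type $A_{n-n_0}^{[n_0]}$, and let $A_0$ be its Gram matrix with respect to a specific reference triangulation (for instance, a fan triangulation of a polygon with prescribed handles/crosscaps, where the chordless cycles are easy to enumerate). A choice of signs for the basis vectors can be made so that every oriented $3$-cycle in $Q_0$ carries exactly one positive off-diagonal entry in $A_0$; one then checks directly that this sign pattern matches Seven's admissibility conditions on each of the short cycles listed above. This is the content of the initial step, and it is essentially the same calculation that appears for the affine case in~\cite{Se}, adapted to the list of chordless cycles produced by an unpunctured triangulation.

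The inductive step is the heart of the argument. Given a triangulation $T$ with quiver $Q=Q(T)$ and an admissible quasi-Cartan companion $A$, consider a flip at an arc $k$, corresponding to mutation $\mu_k$. Admissibility of $A$ implies in particular that $A$ is $k$-compatible, so by~\cite{BGZ} the matrix $\mu_k(A)$ is a quasi-Cartan companion of $\mu_k(Q)$. What must be shown is that the signs on every chordless cycle of $\mu_k(Q)$ satisfy the admissibility condition. Because a flip alters only the triangles adjacent to $k$, every chordless cycle of $\mu_k(Q)$ either coincides with a chordless cycle of $Q$ (inheriting admissibility) or passes through one of the finitely many local configurations around $k$. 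One enumerates these configurations using the block decomposition of surface quivers (quadrilateral, pentagon, and bigon configurations corresponding to arcs bounding two triangles in $S$), and checks for each one that the sign changes prescribed by the mutation rule of~\cite{BGZ} turn an admissible sign pattern on the incoming cycles into an admissible sign pattern on the outgoing cycles.

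The main obstacle I expect is the bookkeeping in this local case analysis: one has to show that the sign-change rule for $A$ under $\mu_k$ does not create a ``bad'' chordless cycle anywhere, including cycles that are not entirely contained in the triangles adjacent to $k$ but that pass through $k$ and reroute after the flip. To keep this tractable I would use the geometric reformulation from the earlier sections, viewing the mutation as a partial reflection in the ambient quadratic space: then the sign of an off-diagonal entry is controlled by the Gram pairing of two basis vectors, and admissibility becomes a statement about the relative positions of the corresponding roots in the extended affine root system of type $A_{n-n_0}^{[n_0]}$. This reduces the infinitely many potential cycle configurations to the handful of root subsystem types that can arise in $A^{[n_0]}$, at which point the verification closes and the proposition follows by induction on the length of the flip sequence.
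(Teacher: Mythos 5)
Your overall strategy --- verify admissibility for an explicit starting companion and then propagate it through a single mutation by examining chordless cycles --- is the same top-level induction as in the paper, and your base case is fine (the companion of Section~\ref{triang-sp} has no chordless cycles of length greater than $3$ and is fully compatible, hence admissible; note that invoking Theorem~\ref{c-ind} here would be circular, since that theorem is deduced from this proposition). The gap is in the inductive step. Your reduction to ``finitely many local configurations around $k$'' does not hold: a chordless cycle $Q'_c$ of $\mu_k(Q)$ can be a non-oriented cycle of arbitrary length, need not be a chordless cycle of $Q$ (mutation at $k$ creates and destroys chords and reroutes cycles through $k$), and the admissibility condition is a \emph{global} sign condition --- the cyclic product of signs over all of $Q'_c$ --- so checking the sign pattern only in the triangles adjacent to $k$ cannot certify it. Your proposed escape via ``relative positions of roots in the extended affine root system'' is asserted but not carried out; nothing in the proposal explains why the unboundedly many configurations of a long non-oriented cycle together with an external vertex $k$ reduce to a finite list, and this is exactly where the content of the proof lies.

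The paper closes this gap with a topological argument that is absent from your proposal. For a chordless cycle $Q'_c$ of $\mu_k(Q)$, it passes to the subquiver $\t Q'_c=Q'_c\cup\{k\}$ and observes that, since $Q'_c$ corresponds to a triangulated polygon or annulus and adjoining the vertex $k$ amounts to gluing two boundary segments or attaching a single triangle, the surface of $\t Q'_c$ is a polygon, an annulus, a three-holed sphere, or a one-holed torus. The first two cases are of finite or affine type and are handled by Seven's uniqueness and mutation-invariance results for admissible companions; the last two are handled by a separate lemma (Lemma~\ref{ell-inv}) whose proof needs further structural input you would also require: Seven's parity statement (Proposition~\ref{non-or}) forcing $k$ to meet a non-oriented cycle in $2$ or $4$ arrows, the valence bound for quivers of unpunctured surfaces, the exclusion of mutation-finite quivers with two non-oriented cycles and no oriented one, and explicit checks of the residual families (which still contain non-oriented cycles of arbitrary length and so are not literally finite in number). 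Without some substitute for this reduction, your induction does not close.
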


\bigskip

The paper is organized as follows. Section~\ref{background} contains the essential facts about mutations of quivers, and about finite mutation classes. In Section~\ref{semi-def}, we first recall the basics on quasi-Cartan companions, and then construct positive semi-definite quasi-Cartan companions for all mutation-finite quivers. In Section~\ref{group},  we discuss the group constructed from a quiver in~\cite{FeTu}: after recalling the presentation, we show that the group depends on three numerical parameters only, namely, on the topological type of the surface (genus and the number of boundary components), and the number of marked points. In other words, the group turns out to be independent on the distribution of marked points amongst the boundary components (note that if Conjecture~\ref{iso} is true, the group would depend on two numerical parameters only). In Section~\ref{eawg}, we associate  with every unpunctured surface an extended affine Weyl group of type $A$, and then discuss the relations between this group and the one constructed above. Finally, Section~\ref{adm-sec} is devoted to the proof of Proposition~\ref{adm} and Theorem~\ref{c-ind} by using admissible quasi-Cartan companions. We also discuss the geometric interpretation of the admissibility condition.

\subsection*{Acknowledgements} The authors are grateful to the anonymous referee for valuable comments. The authors would like to express their gratitude to the Research Institute for Mathematical Sciences, Kyoto, and the organizers of the program on Cluster Algebras at RIMS in the Spring of 2019. M.S. is also grateful to the Research in Pairs Program at the Mathematisches Forschungsinstitut Oberwolfach (Summer 2019) and Mathematical Science Research Institute, Berkeley (Fall 2019) for their hospitality and outstanding working conditions they provided.

  \setcounter{section}{1}
  
\section{Quivers of finite mutation type}
\label{background}
 In this section, we recall the essential notions on mutations of quivers of finite, affine, and finite mutation type.
For details see~\cite{FST}.

\subsection{Quivers and mutations}
\label{dm}
An $n\times n$ skew-symmetric integer matrix $B$ can be encoded by a {\em quiver} $Q$ which is a (multi)-graph with oriented edges (called {\it arrows}). 
Vertices of $Q$ are labeled by $[1,\dots,n]$. If $b_{ij}>0$, we join vertices $i$ and $j$ by $b_{ij}$ arrows directed from $i$ to $j$. 
Throughout the paper we assume that all diagrams are connected (equivalently, matrix $B$ is assumed to be indecomposable). 

For every vertex $k$ of a quiver $Q$ one can define an involutive operation  $\mu_k$ called {\it mutation of $Q$ in direction $k$}. This operation produces a new quiver  denoted by $\mu_k(Q)$ which can be obtained from $Q$ in the following way (see~\cite{FZ}): 
\begin{itemize}
\item
orientations of all arrows incident to a vertex $k$ are reversed; 
\item
for every pair of vertices $(i,j)$ such that $Q$ contains arrows directed from $i$ to $k$ and from $k$ to $j$ the number of arrows joining $i$ and $j$ changes as described in Figure~\ref{quivermut}.
\end{itemize} 

\begin{figure}[!h]
\begin{center}
\epsfig{file=./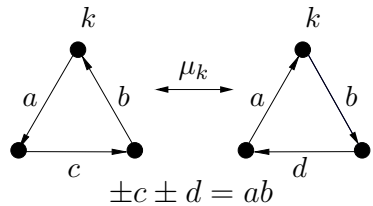,width=0.3\linewidth}
\put(-132,20){\small $a$}
\put(-96,20){\small $b$}
\put(-46,20){\small $a$}
\put(-10,20){\small $b$}
\put(-115,-7){\small $c$}
\put(-30,-7){\small $d$}
\put(-110,50){\small $k$}
\put(-25,50){\small $k$}
\put(-73,32){\small $\mu_k$}\\
$\pm{c}\pm{d}={ab}$
\caption{Mutations of quivers. The sign before ${c}$ (resp., ${d}$) is positive if the three vertices form an oriented cycle, and negative otherwise. Either $c$ or $d$ may vanish. If $ab$ is equal to zero then neither value of $c$ nor orientation of the corresponding arrow does change.}
\label{quivermut}

\end{center}
\end{figure}

Given a quiver $Q$, its {\it mutation class} is the set of all quivers obtained from
the given one by all sequences of iterated mutations. All quivers from one mutation class are called {\it mutation-equivalent}.

\subsection{Finite type}

A quiver is of {\it finite type} if it is mutation-equivalent to an orientation of a simply-laced Dynkin diagram.
So, a quiver of finite type is of one of the following mutation types: 
$A_n$, $D_n$, $E_6$, $E_7$ or $E_8$.

It is shown in~\cite{FZ2} that mutation classes of quivers of finite type are in one-to-one correspondence
with skew-symmetric cluster algebras of finite type. In particular, this implies that any subquiver of a quiver of finite type is also of finite type.

\subsection{Affine type}
A quiver is of {\it affine type} if it is mutation-equivalent to an orientation of a simply-laced affine Dynkin diagram different from an oriented cycle.
A quiver of affine type is of one of the following mutation types: 
$\widetilde A_{k,n-k}$, $0<k<n$ (see Remark~\ref{a}), $\widetilde D_n$, $\widetilde E_6$, $\widetilde E_7$ or $\widetilde E_8$.

\begin{remark}
\label{a}
Let $\widetilde D$ be an affine Dynkin diagram different from $\widetilde A_n$. Then all orientations of $\widetilde D$ are mutation-equivalent. The orientations of  $\widetilde A_{n-1}$ split into $[n/2]$ mutation classes $\widetilde A_{k,n-k}$ (where by $[x]$ we mean the integer part of $x$): each class contains a cyclic representative with only two changes of orientations, with $k$ consecutive arrows in one direction and $n-k$ in the other, $0<k<n$.

\end{remark} 

We will heavily use the following statement. 
\begin{prop}[\cite{BMR,Z}]
\label{subd of aff}
Any subquiver of a quiver of affine type is either of finite or of affine type.

\end{prop}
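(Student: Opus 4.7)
The plan is to combine the locality of quiver mutations with the positive semi-definite characterizations of finite and affine type from~\cite{BGZ} and~\cite{Se}.

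First I would observe that the mutation rule in Figure~\ref{quivermut} is purely local: the arrows created or removed when mutating at vertex $k$ only depend on pairs $(i,j)$ with $i\to k$ and $k\to j$ both present. Consequently, if $Q' = Q|_S$ is the full subquiver of $Q$ on a vertex subset $S$ and $k\in S$, then $(\mu_k Q)|_S = \mu_k(Q|_S)$. Iterating, every mutation sequence on $Q'$ lifts to one on $Q$, and the mutation class of $Q'$ is obtained by restricting the mutation class of $Q$ to $S$. In particular, since $Q$ is of affine type and hence mutation-finite, so is $Q'$.

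Next, I would exploit quasi-Cartan companions. By~\cite{Se}, a quiver of affine type admits an admissible positive semi-definite quasi-Cartan companion $A$ of corank one. The principal submatrix $A|_S$ is then automatically a quasi-Cartan companion of $Q'$, since its off-diagonal entries have the correct absolute values; and by Cauchy interlacing of eigenvalues it is positive semi-definite of corank at most one. Moreover, admissibility is a condition on chordless cycles, and every chordless cycle of the full subquiver $Q'$ is also a chordless cycle of $Q$ (any chord in $Q$ among vertices of the cycle would already lie in the full subquiver $Q'$). Hence $A|_S$ inherits admissibility from $A$, and the characterizations of~\cite{BGZ} (corank zero) and~\cite{Se} (corank one) identify $Q'$ as finite or affine type respectively.

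The main obstacle I foresee is the clean transfer of admissibility to the restricted companion, and in particular confirming that the corank-one case of the Seven characterization applies verbatim to $A|_S$ when the restriction lowers corank or alters the shape of the defining cycles. As a more elementary alternative that avoids the companion machinery proved later in the paper, one can use the locality step alone to conclude that $Q'$ is mutation-finite and then appeal to the explicit classification of mutation-finite quivers from~\cite{FeSTu}: the non-(finite/affine) mutation-finite classes, namely the exceptional elliptic types, $X_6$, $X_7$, and the surface-type quivers, cannot arise as full subquivers of any representative of an affine Dynkin mutation class, since their characteristic local features (for instance certain multiple-arrow or triangle configurations) never appear within such a class. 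This route is longer but more self-contained.
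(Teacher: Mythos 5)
The paper itself offers no proof of Proposition~\ref{subd of aff}: it is quoted as known, with a citation to~\cite{BMR,Z}, where it is obtained by representation-theoretic methods (via cluster categories of tame hereditary algebras). So your argument is necessarily a different route, and its first half is correct and standard: since the mutation rule of Figure~\ref{quivermut} at a vertex $k\in S$ only involves arrows among $S$, mutation commutes with restriction to the full subquiver on $S$, the mutation class of $Q|_S$ consists of restrictions of quivers in the mutation class of $Q$, and $Q|_S$ is mutation-finite. The observations that a chordless cycle of a full subquiver is chordless in the ambient quiver, that admissibility therefore restricts, and that interlacing bounds the corank of $A|_S$ by one, are all fine.

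The gaps are in the final step and in the fallback. First, the finite-type criterion of~\cite{BGZ} is not ``a positive definite quasi-Cartan companion exists'' but ``a positive companion exists \emph{and} every chordless cycle is cyclically oriented''; a non-oriented $4$-cycle, which is of affine type $\widetilde A_3$, does admit a positive definite quasi-Cartan companion (take the Gram matrix of the $D_4$ roots $e_1-e_2$, $e_2-e_3$, $e_3-e_4$, $e_2+e_3$), so the second condition cannot be dropped. Your $A|_S$ is admissible, and admissibility together with positive definiteness does force all chordless cycles to be oriented (an admissibly signed non-oriented chordless cycle has a corank-one Gram matrix), but this needs to be said. Second, and more seriously, the ``if'' direction of the corank-one characterization in~\cite{Se} is precisely the kind of statement whose proof already uses the closure property you are trying to establish, so invoking it here risks circularity; you also correctly sense that its exact hypotheses (in particular the role of the sincere radical vector) must be checked before it is applied verbatim to $A|_S$, and that check is not done. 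Third, the ``more self-contained'' alternative is only asserted: ruling out the elliptic, $X_6$, $X_7$ and general surface classes as full subquivers of representatives of the affine classes \emph{is} the content of that route, and vertex counts do not suffice for $\widetilde A_{k,n-k}$ and $\widetilde D_n$, whose ranks are unbounded. For those two families one should argue via block decompositions (subquivers of block-decomposable quivers are block-decomposable, and removing an arc cuts an annulus or a twice-punctured disk into surfaces of finite or affine type), and for $\widetilde E_6,\widetilde E_7,\widetilde E_8$ via a finite check; none of this is carried out. Finally, a subquiver of a connected quiver need not be connected, so the statement must be read component by component.
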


\subsection{Finite mutation type }

A quiver  is called {\it mutation-finite} (or {\it of finite mutation type}) if its mutation class is finite.

As it is shown in~\cite{FeSTu}, a quiver of finite mutation type
either has only two vertices, or corresponds to a triangulated surface (see Section~\ref{triang-sec}), or belongs to one of finitely many exceptional mutation classes.

\begin{theorem}[\cite{FeSTu}]
\label{class}
Let $\G$ be a mutation-finite diagram with at least $3$ vertices. Then either $\G$ arises from a triangulated surface, or $\G$ is mutation-equivalent to one of $18$ exceptional diagrams
$E_6,E_7,E_8, \widetilde E_6,\widetilde E_7,\widetilde E_8,E_6^{(1,1)}\!,E_7^{(1,1)}\!,E_8^{(1,1)}\!,X_6,X_7$
shown in Fig.~\ref{exceptional-fig}.

\end{theorem}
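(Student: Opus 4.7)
The plan is to follow the standard ``local-to-global'' philosophy used for Cartan-Killing type classifications: identify a list of \emph{forbidden minimal mutation-infinite} subquivers, and then argue that anything avoiding them either fits into a known combinatorial template (triangulations of surfaces in the sense of Fomin--Shapiro--Thurston) or lies in a finite sporadic list. The first step I would carry out is to establish the hereditary principle: if $Q'$ is a full subquiver of a mutation-finite $Q$, then $Q'$ is mutation-finite. This follows because any mutation sequence performed at vertices of $Q'$, when executed inside $Q$, induces the same sequence on the full subquiver $Q'$; infinitely many quivers in the mutation class of $Q'$ would produce infinitely many distinct subquivers, hence infinitely many distinct quivers, in the mutation class of $Q$.

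Next I would classify the $3$-vertex mutation-finite quivers. Every $3$-vertex quiver is either acyclic or an oriented cycle. For the acyclic ones the subquiver test reduces to listing orientations of $A_3$, and multi-arrows can be ruled out by noting that $Q = (1 \xrightarrow{a} 2 \xrightarrow{b} 3)$ is mutation-infinite as soon as $ab \geq 3$. For oriented triangles with weights $(a,b,c)$, the quantity $a^{2}+b^{2}+c^{2}-abc$ is a Markov-type mutation invariant, and one checks directly that the only mutation-finite triples are $(1,1,1)$ (giving $\widetilde A_{1,2}$), $(2,2,2)$ (a surface-type triangle) and the degenerate cases with a zero, together with the single oriented $3$-cycle with weight $2$ on one side. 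This small list is the source of all forbidden configurations at larger ranks.

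I would then bootstrap: every $3$-vertex subquiver of a mutation-finite quiver must appear on this short list, so all arrows in $Q$ have multiplicity $\leq 2$, weight-$2$ arrows can only sit in weight-$(2,2,2)$ triangles, and local pairs of adjacent triangles must match up in one of a few ways. These constraints exactly reproduce the axioms of a \emph{block decomposition} of Fomin--Shapiro--Thurston, in which a quiver is built by gluing a finite number of ``blocks'' (types I--V) along chosen outlet vertices; such block-decomposable quivers are precisely those arising from triangulated (unpunctured or punctured) bordered surfaces. So the main dichotomy becomes: either $Q$ is block-decomposable, and hence comes from a triangulation, or some local obstruction forces $Q$ into a small rank regime.

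The main obstacle, and the place where a conceptual proof becomes a finite computation, is the residual case where $Q$ satisfies all the $3$-vertex local constraints but fails to admit a block decomposition. To handle this I would enumerate mutation classes of small rank (say up to rank $8$--$9$) by computer, checking finiteness of each class, and show by an inductive rank-reduction argument that any mutation-finite quiver of rank $\geq 10$ that is not block-decomposable would contain a rank $\leq 9$ mutation-finite non-block-decomposable subquiver not in the enumerated list, yielding a contradiction. The enumeration would produce exactly the eleven exceptional classes listed (plus orientations giving the stated count of $18$ diagrams including the $\widetilde E$ and $E^{(1,1)}$ series), so the hardest part is really (a) keeping the inductive subquiver analysis tight enough to reduce to bounded rank, and (b) carrying out the bounded-rank enumeration in a way that both produces the exceptional list and certifies that no additional exceptions exist.
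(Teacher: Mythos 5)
First, a point of comparison: the paper does not prove Theorem~\ref{class} at all --- it is quoted as a known result from~\cite{FeSTu} --- so there is no in-paper argument to measure your proposal against. Your outline does reproduce the architecture of the published proof in~\cite{FeSTu}: hereditary mutation-finiteness of full subquivers, local constraints extracted from rank-$3$ subquivers, block decomposability in the sense of~\cite{FST}, a computer-assisted enumeration in bounded rank, and an inductive rank reduction. As a road map it points in the right direction.

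As a proof, however, it has a genuine gap precisely at the step you pass over most quickly. The assertion that the $3$-vertex local constraints ``exactly reproduce the axioms of a block decomposition'' is false: every tree quiver (in particular $E_6$, $E_7$, $E_8$) vacuously satisfies all triangle constraints yet is not block-decomposable, and block-decomposability is not a local condition in any bounded radius. The actual content of~\cite{FeSTu} is the global inductive statement that a mutation-finite quiver of sufficiently large rank, all of whose vertex-deleted subquivers are block-decomposable, is itself block-decomposable; proving this requires a long analysis of how the (generally non-unique) block decompositions of two overlapping corank-one subquivers can be chosen compatibly on their intersection. Nothing in your sketch supplies that argument, and it is the heart of the theorem. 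Your quantitative bounds are also off: $E_8^{(1,1)}$ has $10$ vertices, so an enumeration ``up to rank $8$--$9$'' misses an exceptional class, and the induction can only take over above rank $10$. Finally, two factual slips in the rank-$3$ analysis: the oriented $(1,1,1)$ triangle is of finite type $A_3$ (mutation at any vertex removes the third arrow), not of type $\widetilde A_{1,2}$; the class $\widetilde A_{2,1}$ is represented by the non-oriented simply-laced triangle and the oriented $(1,1,2)$ triangle (cf.\ Section~\ref{subd}). Moreover $a^2+b^2+c^2-abc$ is invariant only while the quiver stays cyclic, so the acyclic branch of the rank-$3$ classification must be handled separately.
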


\begin{figure}[!h]
\begin{center}
\epsfig{file=./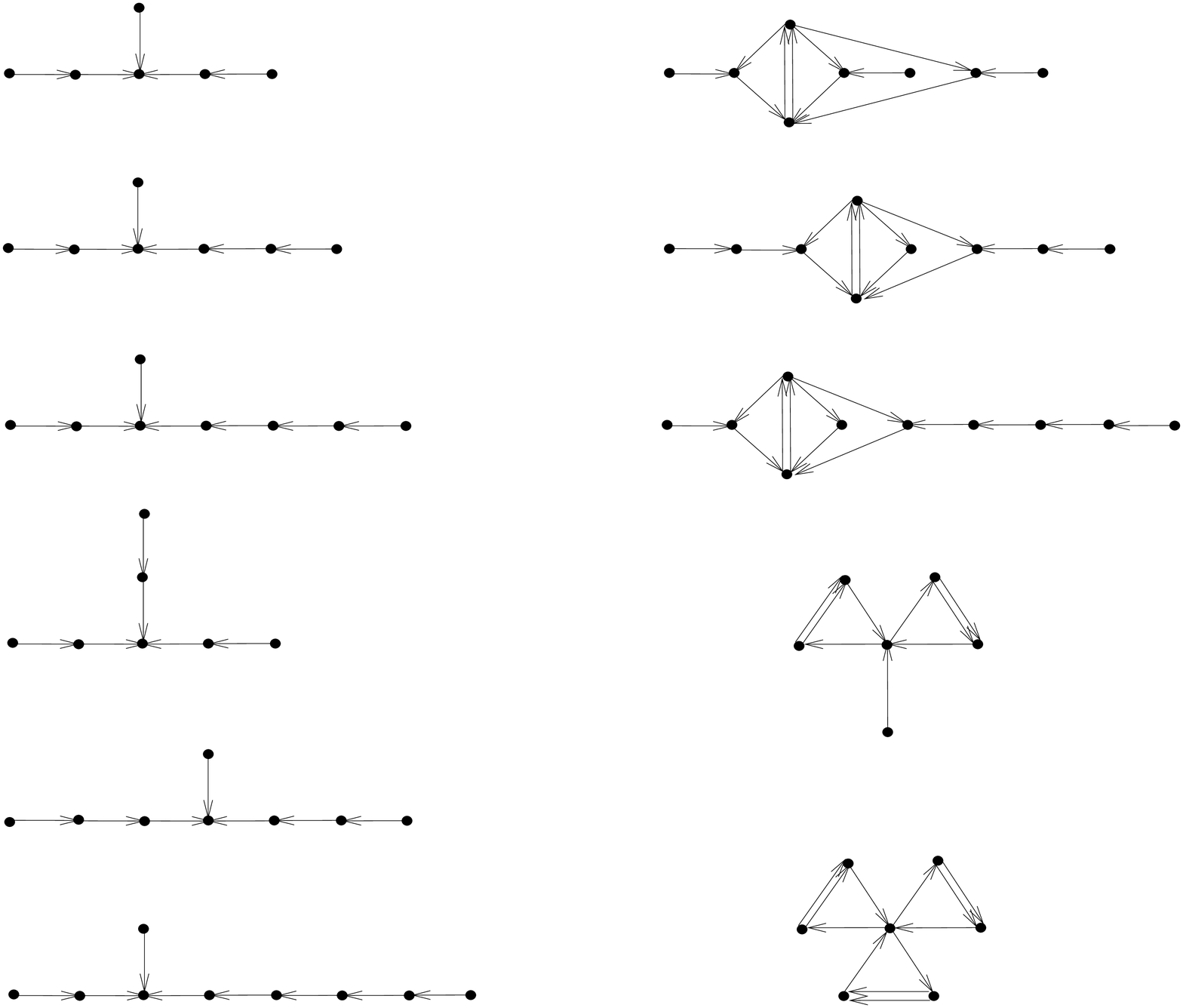,width=0.89\linewidth}
\put(-420,305){$E_6$}
\put(-420,245){$E_7$}
\put(-420,185){$E_8$}
\put(-420,110){$\t E_6$}
\put(-420,48){$\t E_7$}
\put(-420,-10){$\t E_8$}
\put(-210,305){$E_6^{(1,1)}$}
\put(-210,245){$E_7^{(1,1)}$}
\put(-210,185){$E_8^{(1,1)}$}
\put(-160,110){$X_6$}
\put(-160,10){$X_7$}
\end{center}
\caption{Exceptional finite mutation classes}
\label{exceptional-fig}
\end{figure}

\subsection{Triangulated surfaces and block-decomposable quivers}
\label{triang-sec}

The correspondence between quivers of finite mutation type and triangulated surfaces  is developed in~\cite{FST}. Here we briefly remind the basic definitions.

By a {\it surface} we mean a genus $g$ orientable surface with $r$ boundary components and a finite set of marked points, with at least one marked point at each boundary component. A non-boundary marked point is called a {\it puncture}. 

An (ideal) {\it triangulation} of a surface is a triangulation with vertices of triangles in the marked points. We allow self-folded triangles and  follow~\cite{FST} considering triangulations as {\it tagged triangulations} (however, we are neither reproducing nor using all the details in this paper).

Given a triangulated surface, one constructs a quiver in the following way:
\begin{itemize}
\item vertices of the quiver correspond to the (non-boundary) edges of a triangulation;
\item two vertices are connected by an arrow if they correspond to two sides of the same triangle (i.e., there is one simple arrow between given two vertices for every such triangle);
inside the triangle orientations of the arrow are arranged counter-clockwise (with respect to some orientation of the surface);
\item two arrows with different directions connecting the same vertices cancel out;
two arrows in the same direction result in a double arrow;
\item for a self-folded triangle (with two sides identified), two vertices of the quiver corresponding to the sides of this triangle are disjoint;
a vertex corresponding to the ``inner'' side of the triangle is connected to other vertices in the same way as the vertex corresponding to the outer side of the triangle.

\end{itemize}

It is shown in~\cite{FST} that any surface can be cut into {\it elementary surfaces}, we list their quivers in Fig.~\ref{blocks-list}.  
We use {\it white} color for the vertices corresponding to the ``exterior'' edges of these elementary surfaces (such vertices are called {\em open}) and {\it black} for the vertices corresponding to ``interior'' edges. The quivers in  Fig.~\ref{blocks-list} are called {\it blocks}.
Depending on a block, we call it {\it a block of type} ${\rm{I}}$, ${\rm{II}}$ etc. 

As elementary surfaces are glued to each other to form a triangulated surface, the blocks are glued to form a {\it block-decomposition} of a bigger quiver. 
A connected quiver $Q$ is called {\it block-decomposable} (or simply, {\it decomposable})
if it can be obtained from a collection of blocks by identifying white (i.e., open) vertices of different blocks along some partial matching (matching of vertices of the same block is not allowed), where two simple arrows with the same endpoints and opposite directions cancel out, and two arrows with the same endpoints and the same directions form a double arrow. A non-connected quiver $Q$ is called  block-decomposable if every connected component of $Q$ is either decomposable or a single vertex. 

\begin{figure}[!h]
\begin{center}
\epsfig{file=./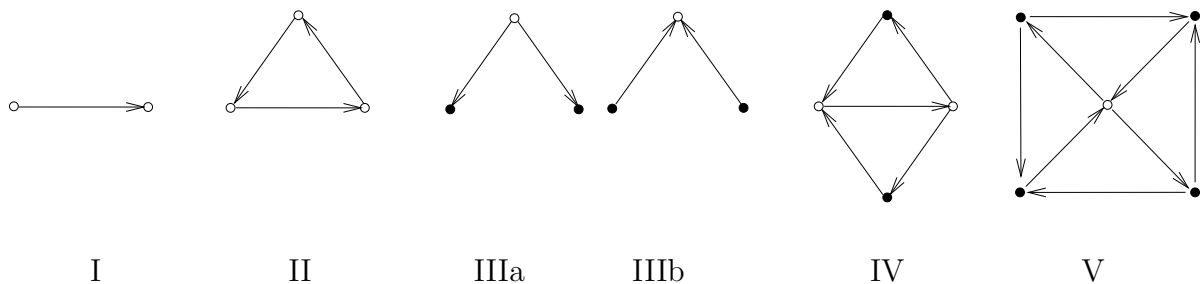,width=0.99\linewidth}
\put(-420,-30){I}
\put(-345,-30){II}
\put(-275,-30){IIIa}
\put(-215,-30){IIIb}
\put(-125,-30){IV}
\put(-45,-30){V}
\caption{Blocks used to obtain quivers from triangulations}
\label{blocks-list}
\end{center}
\end{figure}

Block-decomposable quivers are in one-to-one correspondence with adjacency matrices of arcs of ideal (tagged) triangulations of bordered two-dimensional surfaces with marked points (see~\cite[Section~13]{FST} for the detailed explanations). Mutations of block-decomposable quivers correspond to flips of (tagged) triangulations. In particular, this implies that mutation class of any block-decomposable quiver is finite, and any subquiver of a block-decomposable diagram is block-decomposable too.

Theorem~\ref{class} shows that block-decomposable quivers almost exhaust mutation-finite ones.

We will use the surface presentations of block-decomposable quivers of finite and affine type, see Table~\ref{surface realizations}.

\begin{table}[!h]
\begin{center}
\caption{Surfaces corresponding to quivers of finite and affine type}
\label{surface realizations}
\begin{tabular}{|c|c|c|}
\hline
\multicolumn{3}{|c|}{Finite types}\\
\hline
  \raisebox{0mm}{$A_n$, $n\ge 1$} &\raisebox{-1mm}{\epsfig{file=./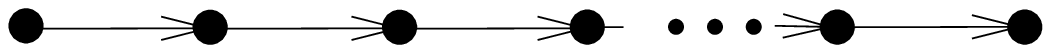,width=0.25\linewidth}} &\raisebox{0mm}{\small disk}\\
\hline
\raisebox{5mm}{$D_n$, $n\ge 4$}&\epsfig{file=./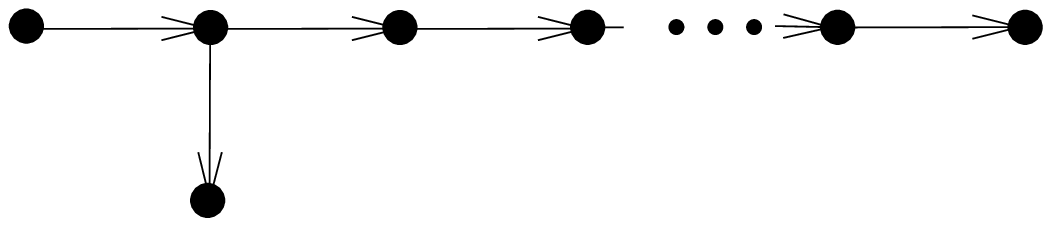,width=0.25\linewidth} &\raisebox{4mm}{\small punctured disk}\\
                                                  \hline
  \multicolumn{3}{|c|}{Affine types}\\
  \hline
\raisebox{4mm}{$\widetilde A_{k,n-k}$, $n>k\ge 1$} & \raisebox{0mm}{\epsfig{file=./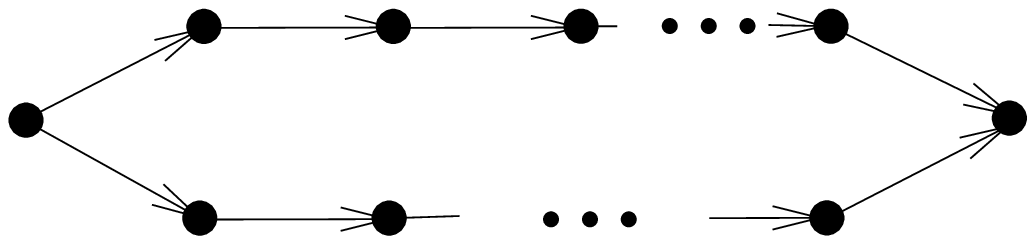,width=0.25\linewidth}}&\raisebox{4mm}{\small annulus}\\
\hline
\raisebox{5mm}{$\widetilde D_n$, $n\ge 4$}& \epsfig{file=./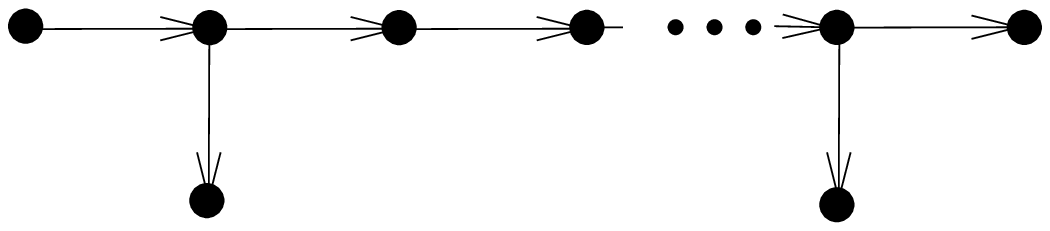,width=0.25\linewidth}&\raisebox{5mm}{\small twice punctured disk}\\
\hline
\end{tabular}
\end{center}
\end{table}

\begin{remark}
A mutation class $\widetilde A_{k,n-k}$ (of affine type $\widetilde A_{n-1}$) corresponds to an annulus with $k$ marked points on one boundary component and $n-k$ on the other.

\end{remark}

\subsection{Subquivers of mutation-finite quivers}
\label{subd}

In this section, we list some technical facts we are going to use in the sequel. 

\subsubsection*{Oriented cycles in mutation-finite quivers}

It is easy to see that there are two types of mutation-finite oriented chordless cycles: simply-laced cycles (they are of finite type $D_n$) and a cycle of length three with $(1,1,2)$ arrows (of type $\t A_{2,1}$). Note that a quiver of type $D_n$ for $n\ge 4$ corresponds to a punctured disk, so these will not appear in quivers constructed from  unpunctured surfaces.    

\subsubsection*{Non-oriented cycles in mutation-finite quivers}

It is also a well known fact that all non-oriented cycles in mutation-finite quivers are simply-laced (and thus of type $\t A_{k,n-k}$ for some $k$).

We will also use the following statement proved by Seven in~\cite{Se1}.

\begin{prop}[Proposition~2.1(iv),~\cite{Se1}]
\label{non-or}
Let $Q$ be a simply-laced mutation-finite quiver and let $C\subset Q$ be a non-oriented chordless cycle.
Then for each vertex $v\in Q$ the number of arrows connecting $v$ with $C$ is even.

\end{prop}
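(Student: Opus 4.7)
The plan is to split the argument according to the classification in Theorem~\ref{class}: first the block-decomposable case (quivers from triangulated surfaces), then the finitely many exceptional mutation classes.

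First, the case $v \in C$ is immediate: chordlessness of $C$ forces $v$ to have exactly its two cyclic neighbors in $C$, giving $2$ arrows between $v$ and $C$. So assume $v \notin C$ throughout, and let $\alpha$ denote the arc corresponding to $v$ in the triangulation.

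For block-decomposable $Q$, the core of the argument is a combinatorial counting on the triangulation. Label the cyclic vertices of $C$ as arcs $a_1, \ldots, a_n$ and let $T_i$ be the triangle containing both $a_i$ and $a_{i+1}$ (indices cyclic mod $n$). The $T_i$ are pairwise distinct: if $T_i = T_{i+1}$, the triangle would contain three consecutive arcs of $C$, producing a chord of $C$ for $n \ge 4$ and an oriented $3$-cycle for $n = 3$ (by the counterclockwise convention for arrows inside a triangle block), either of which contradicts non-orientation of $C$. Since each internal arc lies in exactly two triangles of the triangulation and $a_i$ lies in $T_{i-1}$ and $T_i$, any arc $\alpha$ outside $C$ that shares a triangle with some $a_i$ must appear as the ``third'' (non-$C$) arc of $T_{i-1}$ or of $T_i$. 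Let $I_\alpha$ be the set of cyclic indices $j$ for which $\alpha$ is the third arc of $T_j$, so $|I_\alpha| \le 2$. For each $i$, each $T_j \in \{T_{i-1}, T_i\}$ containing $\alpha$ as third arc contributes one arrow between $\alpha$ and $a_i$ in $Q$; if both contribute, the counterclockwise convention in each triangle makes the two arrows oppositely directed, and simply-lacedness of $Q$ forces them to cancel rather than combine into a double arrow. Hence the number of arrows between $\alpha$ and $a_i$ equals $1$ if exactly one of $i-1$, $i$ lies in $I_\alpha$ and $0$ otherwise. Summing over cyclic $i$ yields the total number of arrows between $\alpha$ and $C$, which equals the number of transitions of the indicator function of $I_\alpha$ on the cyclic sequence of length $n$; this number is always even, because the transitions of any cyclic binary sequence alternate between $0 \to 1$ and $1 \to 0$.

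For the finitely many exceptional mutation classes in Theorem~\ref{class} (types $E_n$, $\widetilde E_n$, $E_n^{(1,1)}$ for $n \in \{6,7,8\}$, plus $X_6$ and $X_7$), each class contains only finitely many quivers, so the statement reduces to a finite case check (and many of these classes in fact admit no non-oriented chordless cycle at all). I expect the main obstacle to be the verification of the cancellation claim for punctured surfaces, where blocks IV and V of Fig.~\ref{blocks-list} may produce degenerate configurations in which two distinct ``triangles'' of a tagged triangulation share more than one arc (or self-fold); one must check that the counterclockwise convention still forces opposite orientations of the contributed arrows in these cases, so that the transition-count argument goes through uniformly.
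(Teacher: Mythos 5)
The paper does not actually prove this statement: it is imported verbatim from Seven's paper \cite{Se1} (Proposition~2.1(iv) there), so there is no internal proof to compare against. Seven's argument is a direct local mutation analysis (violating the parity condition lets one mutate to an arrow of multiplicity at least $3$, contradicting mutation-finiteness), and in particular it does not pass through the classification Theorem~\ref{class}. Your route through the classification is logically admissible (no circularity arises), and the core parity count for unpunctured surfaces is essentially sound: the number of arrows between $\alpha$ and $a_i$ is congruent mod $2$ to $|I_\alpha\cap\{i-1,i\}|$, and summing over $i$ gives $2|I_\alpha|$, which is even whether the two contributions cancel, survive singly, or would double. However, your stated justification for the cancellation --- that the counterclockwise convention forces the two contributed arrows to be oppositely directed --- is false as a geometric fact: two triangles sharing two arcs can contribute arrows in the \emph{same} direction, which is exactly how the double arrow of $\widetilde A_{1,1}$ arises from the annulus with one marked point on each boundary component. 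Only the simply-laced hypothesis on $Q$ rules out that outcome here, and the argument should be phrased that way; similarly, several of your chordlessness deductions ("a shared triangle produces a chord") ignore the possibility that an arrow contributed by one triangle is cancelled by another, and need the same care.

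The genuine gap is the one you flag yourself in the last paragraph: punctured surfaces. The proposition is stated for \emph{all} simply-laced mutation-finite quivers, but your combinatorial bookkeeping (each internal arc lies in exactly two triangles, the $T_i$ are well defined and distinct, the "third arc" of $T_j$ is unambiguous) relies on ideal triangulations of unpunctured surfaces with no self-folded triangles; for tagged triangulations with self-folded triangles and blocks IIIa, IIIb, IV, V the quiver is wired differently (the inner arc of a self-folded triangle inherits the adjacencies of the outer one), and you give no argument there, only an expectation that one can be found. As written, the proposal therefore proves only the unpunctured case plus a promissory note, and deferring the nine exceptional classes to "a finite case check" hides a computer verification over thousands of quivers for $E_8^{(1,1)}$. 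Either restrict the statement to unpunctured surfaces (which is all the paper ever uses), complete the tagged-triangulation analysis, or reproduce Seven's direct mutation argument, which treats all cases uniformly.
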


\section{Quasi-Cartan companions}
\label{semi-def}

In~\cite{BGZ}, Barot, Geiss and Zelevinsky introduced a notion of quasi-Cartan companion of a skew-symmetrizable matrix and defined its mutation. As we restrict ourselves to quivers, we reproduce below their definitions for skew-symmetric matrices. 

\subsection{Definitions and basic properties}

\begin{definition}[Quasi-Cartan companion]
Let $B$ be an $n\times n$  skew-symmetric matrix. An $n\times n$  symmetric matrix $A$ is a {\it quasi-Cartan companion} of $B$
if $|a_{ij}|=|b_{ij}|$ and $a_{ii}=2$.  
  
\end{definition}  

\begin{remark}
  A quasi-Cartan companion contains the same information as the skew-symmetric matrix $B$ together with the choice of signs assigned to each (unordered) pair of indices $(i,j)$, $1\le i,j \le n$ with non-zero $b_{ij}$ (sign of the entry $a_{ij}=a_{ji}$ in $A$).
  
Pictorially, we will represent a quasi-Cartan companion by labelling the arrows of a quiver with the signs of the corresponding elements.  

We will also say that a quasi-Cartan companion of a skew-symmetric matrix is a quasi-Cartan companion of the corresponding quiver.
\end{remark}  

\begin{definition}
Given a quiver $Q$ and its quasi-Cartan companion $A$, consider a quadratic vector space $V$ defined by the quadratic form $A$. Let ${\bf v}= \{v_1,\dots,v_n\}$ be basis vectors in $V$ for which $A$ serves as the Gram matrix, i.e. $(v_i,v_j)=a_{ij}$. Generalizing the definition of Parsons~\cite{P1,P2}, we will call the set of vectors $\bf v$ a {\it companion basis of $Q$}.

\end{definition}

\begin{definition}[Mutation of  quasi-Cartan companions]
Let $A$ be a quasi-Cartan companion of $B$.
A mutation $\mu_k$ of $A$ is defined as $\mu_k(A)=A'$ where
$$
a_{ij}'=\begin{cases}
2 & \text{if $i=j$; }\\
\sgn(b_{ik})a_{ik}  & \text{if $j=k$; }\\  
-\sgn(b_{kj})a_{kj}  & \text{if $i=k$; }\\  
a_{ij}-\sgn(a_{ik}a_{kj})[b_{ik}b_{kj}]_+  & \text{otherwise. }\\  
\end{cases}
$$
  
\end{definition}




\begin{remark} 
\label{geometric realisation}
There is a geometric interpretation of the mutation of quasi-Cartan companions, as follows.
Let $\bf v$ be a companion basis for $Q$. 
Then it is straightforward to check that the elements $a_{ij}'$ of $\mu_k(A)=A'$ satisfy $a_{ij}'=\langle v_i',v_j'\rangle$, where
%
$$
v_i'=\begin{cases}
-v_i &  \text{if $i=k$; }\\
v_i-\langle v_i,v_k\rangle v_k     &  \text{if $b_{i,k}>0$; }\\
v_i  &  \text{otherwise}.\\
\end{cases}  
$$
In other words, a mutation of a quasi-Cartan companion corresponds to the reflection of some of the vectors of the companion basis. In particular, $\mu_k(A)$ and $A$ define the same quadratic form (written in different bases).

\end{remark}

Note that the result of a mutation of a quasi-Cartan companion is not always a quasi-Cartan companion of the mutated matrix, see Fig.~\ref{ex-mut}.

\begin{figure}[!h]
\begin{center}
\epsfig{file=./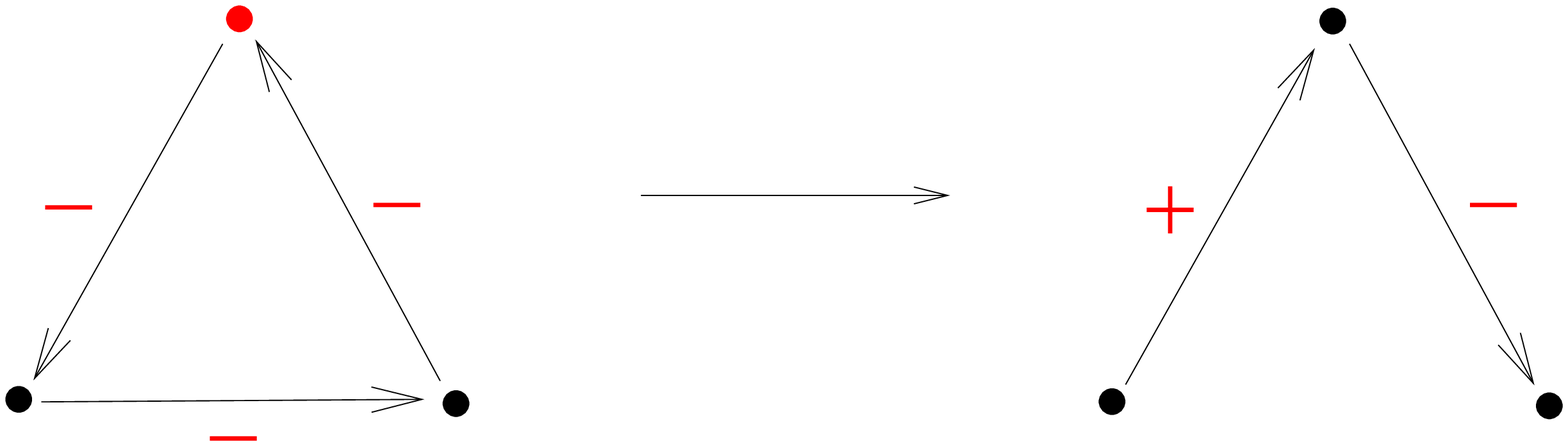,width=0.4\linewidth}
\put(-100,31){$\mu_2$}
\put(-210,-12){$e_1-e_2$}    
\put(-140,-12){$e_2-e_3$}    
\put(-150,45){$e_3-e_1$}    
\put(-80,-12){$e_1-e_2$}    
\put(-10,-12){$e_2-e_1$}    
\put(-20,45){$e_1-e_3$}    
\put(30,20){\scriptsize $\mu_2(A)=\begin{pmatrix}2& 1 & -2 \\ 1& 2 & -1 \\ -2 & -1 & 2 \end{pmatrix}   $}
\put(-290,20){\scriptsize $A=\begin{pmatrix}2& -1 & -1 \\ -1& 2 & -1 \\ -1 & -1 & 2 \end{pmatrix}   $}
\caption{A mutation transforming a quasi-Cartan companion of the quiver to a matrix which is not a quasi-Cartan companion of the mutated quiver.}
\label{ex-mut}
\end{center}
\end{figure}

In~\cite{BGZ},  Barot, Geiss and Zelevinsky described a sufficient condition for a quasi-Cartan companion to ensure that the result of the mutation $\mu_k$  is a quasi-Cartan companion of the mutated matrix. This is provided by the notion of $k$-compatibility.

\begin{definition}[$k$-compatibility]
 A quasi-Cartan companion $A$ is {\it $k$-compatible } if for every $i,j\ne k$  one has
 $$
 \begin{cases}
 a_{ij}a_{jk}a_{ki}>0  & \text{if $(i,j,k)$ form an oriented cycle,}\\
 a_{ij}a_{jk}a_{ki}\le 0  & \text{otherwise}.\\
 \end{cases}  
 $$
\end{definition}  

\begin{lemma}[\cite{BGZ}]
\label{l k-comp}   
Let $A$ be a $k$-compatible quasi-Cartan companion for $B$. Then $\mu_k(A)$ is a $k$-compatible quasi-Cartan companion for $\mu_k(B)$.
  
\end{lemma}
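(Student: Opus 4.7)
The plan is to exploit the geometric interpretation of Remark~\ref{geometric realisation}. Fix a companion basis $\{v_1,\dots,v_n\}$ with Gram matrix $A$, and define $\{v_i'\}$ by negating $v_k$ and reflecting each $v_i$ with $b_{ik}>0$ in the hyperplane orthogonal to $v_k$. The remark identifies $\mu_k(A)$ with the Gram matrix of $\{v_i'\}$, so $\mu_k(A)$ is automatically symmetric, and since each $v_i'$ is an isometric image of $v_i$, one has $(\mu_k(A))_{ii}=\langle v_i',v_i'\rangle=2$.

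To verify that $\mu_k(A)$ is a quasi-Cartan companion of $\mu_k(B)$, I would check $|a_{ij}'|=|b_{ij}'|$ by cases. If exactly one of $i,j$ equals $k$, the mutation formulas yield $|a_{ij}'|=|a_{ij}|=|b_{ij}|=|b_{ij}'|$ immediately. If $i,j\neq k$ and $b_{ik}b_{kj}\leq 0$, both $a_{ij}$ and $b_{ij}$ are unchanged, and there is nothing to prove. The substantive case is $b_{ik}b_{kj}>0$, where $a_{ij}'=a_{ij}-a_{ik}a_{kj}$ and $b_{ij}'=b_{ij}+\sgn(b_{ik})b_{ik}b_{kj}$. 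Here whether $(i,j,k)$ is oriented in $B$ is determined by the sign of $b_{ij}$ relative to $\sgn(b_{ik})$, and $k$-compatibility of $A$ pins down $\sgn(a_{ij}a_{jk}a_{ki})$ so that $|a_{ij}'|=|b_{ij}|+|b_{ik}b_{kj}|$ in the non-oriented case and $|a_{ij}'|=\bigl||b_{ij}|-|b_{ik}b_{kj}|\bigr|$ in the oriented case, both matching $|b_{ij}'|$.

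For $k$-compatibility of $\mu_k(A)$ with respect to $\mu_k(B)$, fix distinct $i,j\neq k$. The mutation formulas give $a_{ki}'=\sgn(b_{ik})a_{ik}$ and $a_{jk}'=\sgn(b_{jk})a_{jk}$, hence $a_{ij}'a_{jk}'a_{ki}'=\sgn(b_{ik}b_{jk})\,a_{ij}'\,a_{ik}a_{jk}$. When $b_{ik}b_{kj}\leq 0$ the $(i,j)$-arrow is unchanged by the mutation, and the sign of the product follows directly from the $k$-compatibility of $A$ together with the observation that reversing the arrows at $k$ acts on $a_{jk}'a_{ki}'$ in a controlled way. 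When $b_{ik}b_{kj}>0$, substituting $a_{ij}'=a_{ij}-a_{ik}a_{kj}$ reduces the product to an expression of the form $\pm\bigl((a_{ik}a_{jk})^2-a_{ij}a_{ik}a_{jk}\bigr)$. The comparison of $|b_{ij}|$ with $|b_{ik}b_{kj}|$ then simultaneously determines the sign of this expression (via $|a_{ij}a_{ik}a_{jk}|$ versus $(a_{ik}a_{jk})^2$) and whether the arrow between $i$ and $j$ reverses under $\mu_k$, which in turn fixes whether $(i,j,k)$ is oriented in $\mu_k(B)$; the two effects are consistent.

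The main obstacle is the bookkeeping in the last step: one must track how the obligatory flip of arrows at $k$, combined with the conditional reversal of the $i\leftrightarrow j$ arrow, changes the orientation status of the triangle $(i,j,k)$, and match this against the sign of the competitive expression $(a_{ik}a_{jk})^2-a_{ij}a_{ik}a_{jk}$. The saving grace is that a single inequality between $|b_{ij}|$ and $|b_{ik}b_{kj}|$ simultaneously governs both phenomena, so the enumeration of sub-cases closes up consistently and the verification reduces to a finite sign check.
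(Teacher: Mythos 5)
The paper does not prove this lemma at all --- it is quoted from~\cite{BGZ} --- so there is no internal argument to compare against; your proposal supplies the standard direct verification, and it is correct. Both halves check out: the quasi-Cartan property reduces, in the only nontrivial case $b_{ik}b_{kj}>0$, to noting that $k$-compatibility forces $\sgn(a_{ij})=\sgn(a_{ik}a_{kj})$ when $(i,j,k)$ is oriented and the opposite sign otherwise, so $|a_{ij}-a_{ik}a_{kj}|$ agrees with $\bigl||b_{ij}|-|b_{ik}b_{kj}|\bigr|$ or $|b_{ij}|+|b_{ik}b_{kj}|$ exactly as $|b'_{ij}|$ does. For the preserved $k$-compatibility, the one place where your write-up stays at the level of ``the two effects are consistent'' can be made completely explicit: with $b_{ik},b_{kj}>0$ one gets $a'_{ij}a'_{jk}a'_{ki}=(a_{ik}a_{jk})^2-a_{ij}a_{ik}a_{jk}$; if $(i,j,k)$ is not oriented in $B$ this is $\ge (a_{ik}a_{jk})^2>0$ while $b'_{ij}=b_{ij}+b_{ik}b_{kj}>0$ makes the new triangle oriented, and if $(i,j,k)$ is oriented in $B$ then $a_{ij}a_{ik}a_{jk}=|b_{ij}|\,|b_{ik}b_{kj}|$ gives the identity $a'_{ij}a'_{jk}a'_{ki}=|b_{ik}b_{kj}|\,b'_{ij}$, so the sign of the product literally equals the sign of $b'_{ij}$, which is what decides orientation in $\mu_k(B)$. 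With that identity written down, the ``finite sign check'' you defer to closes with no remaining sub-cases, and the degenerate situations ($b_{ik}b_{kj}\le 0$, or a missing arrow making all products vanish) are handled as you indicate.
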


\begin{definition}[full compatibility]
A quasi-Cartan companion is {\it fully compatible} if it is $k$-compatible for every $k\in \{1,\dots, n \}$.
  
\end{definition}

\begin{remark}
  \label{sign}

  Given a fully compatible quasi-Cartan companion, we can change the sign of any vector in a companion basis to obtain a new fully compatible quasi-Cartan companion. The resulting matrix differs by the signs of off-diagonal entries in a given row and column.

\end{remark}

To construct an example of a fully compatible quasi-Cartan companion, one can take any acyclic quiver and label all its arrows with the negative sign. 

\begin{theorem}[\cite{S2,ST}]
\label{ST}  
Let $Q$ be an acyclic quiver and $A$ be its  quasi-Cartan companion with $a_{ij}\le 0$ for all $i\ne j$. Then for every sequence $\mu$ of mutations the matrix $\mu(A)$ is a fully compatible quasi-Cartan companion of $\mu(Q)$. 

\end{theorem}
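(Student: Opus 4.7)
The plan is to prove the theorem by induction on the length of the mutation sequence $\mu$, with the inductive hypothesis that $\mu(A)$ is a fully compatible quasi-Cartan companion of $\mu(Q)$. Note that this hypothesis is genuinely weaker than keeping the sign condition $a_{ij}\le 0$: after mutations some entries may become positive, yet full compatibility can still be maintained. For the base case, since $Q$ is acyclic no triple $(i,j,l)$ of vertices forms an oriented cycle, so the $j$-compatibility condition at each vertex $j$ reduces to $a_{il}a_{lj}a_{ji}\le 0$. Every factor is non-positive by assumption, so the product is non-positive and $A$ is $j$-compatible for every $j$.

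For the inductive step, suppose $A'=\mu(A)$ is a fully compatible quasi-Cartan companion of $Q'=\mu(Q)$. We must show that $A''=\mu_k(A')$ is fully compatible for $Q''=\mu_k(Q')$. By Lemma~\ref{l k-comp}, $A''$ is already a $k$-compatible quasi-Cartan companion of $Q''$, so the remaining task is to verify $j$-compatibility for every $j\neq k$, i.e., that for each triple $(i,j,l)$ in $Q''$ the sign of $a''_{ij}a''_{jl}a''_{li}$ matches whether $(i,j,l)$ is an oriented cycle of $Q''$. The cleanest way to do this is through the geometric interpretation of Remark~\ref{geometric realisation}: the mutated basis $\mathbf{v}''$ is obtained from $\mathbf{v}'$ by negating $v'_k$ and reflecting those $v'_m$ with $b'_{mk}>0$ across the hyperplane perpendicular to $v'_k$. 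This yields explicit formulas for each $a''_{ij}=\langle v''_i,v''_j\rangle$, which can then be matched against the combinatorial rule describing how the arrows among $i,j,l$ change under $\mu_k$, splitting into the subcases $k\in\{i,l\}$ and $k\notin\{i,j,l\}$.

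The main obstacle is the coordinated sign bookkeeping between (i) the change in the arrow configuration among $\{i,j,l\}$ under $\mu_k$, (ii) the change in the sign of $a''_{ij}a''_{jl}a''_{li}$ dictated by the mutation formula, and (iii) the resulting oriented-cycle pattern in $Q''$. The \emph{full} compatibility of $A'$, rather than merely $k$-compatibility, is essential: the sign of $a'_{ik}a'_{kl}$ appearing in the mutation formula for $a''_{il}$ is governed precisely by the $l$- or $i$-compatibility of $A'$ at the triple $(i,k,l)$, and without this the induction would collapse. Organising the case analysis by counting how many of $i,j,l$ are adjacent to $k$ in $Q'$ reduces the problem to a finite list of local configurations, each settled by a direct sign check. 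This local verification is the bulk of the argument and is what makes the statement nontrivial despite its clean formulation.
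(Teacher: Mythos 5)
This theorem is quoted in the paper from \cite{S2,ST} without proof, so there is no internal argument to compare against; judged on its own terms, your proposal has a genuine gap in the inductive step. You propose to carry only the hypothesis ``$\mu(A)$ is a fully compatible quasi-Cartan companion of $\mu(Q)$'' and to deduce that $\mu_k(\mu(A))$ is again fully compatible by a local sign analysis on triples. But the implication ``fully compatible $\Rightarrow$ every mutation is fully compatible'' is false, and the paper itself exhibits a counterexample (the remark accompanying Fig.~\ref{ex_puncture}): a fully compatible positive semi-definite companion whose mutation contains an oriented triangle violating compatibility. Since your induction uses nothing about $Q'$ beyond full compatibility of $A'$, if your case analysis closed it would prove this false general statement; so the case analysis must break down. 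The concrete failure mode is the one you gloss over: when $i$ and $l$ are both adjacent to $k$ in $Q'$ but $b'_{il}=0$, the triple $(i,k,l)$ satisfies every compatibility condition vacuously (the product $a'_{ik}a'_{kl}a'_{li}$ is zero), so full compatibility of $A'$ puts \emph{no} constraint on $\sgn(a'_{ik}a'_{kl})$, yet this sign determines the new entry $a''_{il}=-\sgn(a'_{ik}a'_{kl})[b'_{ik}b'_{kl}]_+$ and hence whether triples of $Q''$ through the newly created arrow are compatible. That sign is controlled only by conditions on longer chordless cycles through $i,k,l$, which full compatibility does not see.

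The invariant that actually propagates is \emph{admissibility} in the sense of Seven (the sign condition on all chordless cycles, not just triangles, cf.\ Section~\ref{adm-sec}), and even then one must prove that admissibility is preserved under mutation for quivers in the mutation class of an acyclic quiver; this requires structural information about how chordless cycles transform, which is the real content of \cite{S2}. The alternative route in \cite{ST} avoids companion bookkeeping entirely and deduces the statement from sign-coherence of $c$-vectors for acyclic seeds. Your base case is fine, and Lemma~\ref{l k-comp} correctly gives $k$-compatibility of $A''$, but to repair the argument you would need to strengthen the inductive hypothesis from full compatibility to admissibility and supply the cycle analysis that makes admissibility mutation-invariant in the acyclic setting.
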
  

\begin{remark}
\label{fin,aff}  
In particular, Theorem~\ref{ST} can be applied in all finite and affine cases.
If $B$ is of finite type, the quasi-Cartan companion described in Theorem~\ref{ST} is a positive definite quasi-Cartan matrix (cf. Remark~\ref{geometric realisation}).
Similarly, If $B$ is of affine type, then the corresponding quasi-Cartan companion is positive semi-definite.
\end{remark}

\begin{remark}
\label{ell}
One can extend the above construction of a positive semi-definite quasi-Cartan companion to the case of 
the elliptic quivers $E_6^{(1,1)}$,  $E_7^{(1,1)}$,  $E_8^{(1,1)}$ in the following way:
\begin{itemize}
\item[-] Let $Q$ be a quiver of one of the types above, consider a subquiver $Q'$ obtained by removing one of the ends of the double arrow. Quiver $Q'$ is of the type $\widetilde E_6$, $\widetilde E_7$ or $\widetilde E_8$ respectively. Now consider a quasi-Cartan companion $A'$ of $Q'$ (with all arrows  labelled by the negative sign).
\item[-] Let $x$ be the removed node in $Q$, and let $x'$ be the other end of the double arrow. Let $v_i$ be the vector assigned to $x'$ in a companion basis giving rise to $A'$. Assigning to $x$ a copy of $v_i$ we obtain a positive semi-definite quasi-Cartan companion $A$ of $Q$.

\end{itemize}

One can check explicitly by computation that for each mutation sequence $\mu$ the matrix $\mu(A)$  is a fully compatible quasi-Cartan companion of $\mu(Q)$ (this also follows fro Remark~\ref{iso-el} together with Proposition~\ref{adm}).
\end{remark}  

\begin{remark}
\label{basis}
Vectors constructed in Remark~\ref{ell} are clearly linearly dependent. However, we can slightly amend the construction by adding to $v_i$ a new basis vector lying in the kernel of the quadratic form. In this way we obtain a collection of linearly independent vectors which we have right to call a companion basis. We will follow this procedure throughout the paper. 
  \end{remark}



  \subsection{Positive semi-definite companions and mutation finiteness}
  In this section, we prove the following theorem.

\begin{theorem}
\label{thm fin mut type}
Let $Q$ be a connected quiver of finite mutation type with at least 3 vertices.
Then $Q$ has  a positive semi-definite quasi-Cartan companion.
  
\end{theorem}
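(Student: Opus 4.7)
The plan is to use the classification Theorem~\ref{class}, which reduces the problem to two cases: either $Q$ belongs to one of the eleven exceptional mutation classes $E_n, \widetilde E_n, E_n^{(1,1)}$ ($n=6,7,8$), $X_6$, $X_7$ shown in Figure~\ref{exceptional-fig}, or $Q$ arises from a triangulated surface and is therefore block-decomposable. I would treat these two cases separately.

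For each of the six Dynkin and affine Dynkin classes $E_n$ and $\widetilde E_n$, I would pick an acyclic orientation $Q_0$ of the corresponding diagram and take the quasi-Cartan companion $A_0$ of $Q_0$ with all off-diagonal entries equal to $-1$. This $A_0$ is positive (semi-)definite by Remark~\ref{fin,aff}, and Theorem~\ref{ST} ensures that any iterated mutation of $(Q_0,A_0)$ is again a quasi-Cartan companion. Since the mutation of a quasi-Cartan companion amounts to a change of basis in the underlying quadratic space (Remark~\ref{geometric realisation}), the Gram matrix remains positive semi-definite throughout the mutation class. For the three elliptic types $E_n^{(1,1)}$, Remark~\ref{ell} provides a positive semi-definite quasi-Cartan companion of a chosen representative, and the same propagation argument applies. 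The remaining classes $X_6$ and $X_7$ have small mutation classes and can be handled by exhibiting a positive semi-definite quasi-Cartan companion for each of their quivers directly, e.g.\ by finite computation.

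For the block-decomposable case, the plan is to construct a companion basis of $Q$ realized inside a positive semi-definite quadratic space. With each block of types I--V from Figure~\ref{blocks-list} I would associate a companion basis consisting of vectors of the form $e_i - e_j$ in an auxiliary type-$A$ root space, with indices attached to the block in the natural way induced by the corresponding elementary triangulated surface; direct inspection of the five block types shows that, after appropriate sign choices, the Gram matrix of this basis is a quasi-Cartan companion of the block. To assemble the block decomposition of $Q$, I would glue blocks along matched white vertices by identifying the corresponding roots in a common ambient space, extending the ambient space by an additional isotropic kernel vector whenever the identification would force a new linear relation among the $e_i$'s (this produces the extended affine root space of type $A$ discussed later in the paper). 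All vectors then live in a fixed positive semi-definite quadratic space, so their Gram matrix is positive semi-definite and, by construction, is a quasi-Cartan companion of $Q$.

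The main obstacle will be the combinatorial consistency of the gluing procedure for the block-decomposable case: the off-diagonal signs of the glued Gram matrix must match those of a genuine quasi-Cartan companion of $Q$. I plan to resolve this by processing the blocks in the order of a spanning tree of the dual graph of the block decomposition and, at each gluing step, using the sign-change freedom of Remark~\ref{sign} to align the newly attached block with what has been built so far. Positive semi-definiteness is automatic at every intermediate stage because the ambient quadratic space remains positive semi-definite throughout; the real difficulty is purely combinatorial, and amounts to a careful case analysis across the five block types showing that signs can always be consistently matched at shared white vertices.
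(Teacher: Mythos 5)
Your treatment of the eleven exceptional classes is fine and matches the paper (modulo the small point that Theorem~\ref{ST} does not literally apply to the elliptic representatives $E_n^{(1,1)}$, which are not acyclic; the paper falls back on an explicit check there, and you should too). The gap is in the surface case, and it is not where you think it is. You plan to give each block a companion basis of roots $e_i-e_j$ and then glue by \emph{identifying the roots at matched white vertices}, worrying only about signs. But the definition of a quasi-Cartan companion imposes no sign condition at all --- only $|a_{ij}|=|b_{ij}|$ --- so the ``combinatorial consistency of signs'' you plan to fight with Remark~\ref{sign} and a spanning tree is vacuous. The real obstruction is that identifying roots produces the wrong \emph{absolute values} of inner products whenever two blocks are glued along two vertices. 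Concretely: glue two blocks of type II along vertices $u,v$. Inside each block the companion-basis condition forces $|(v_u,v_v)|=1$; after gluing, the two $u$--$v$ arrows either cancel (so $b_{uv}=0$ and you need $(v_u,v_v)=0$, as in the oriented chordless quadrilateral) or combine into a double arrow (so you need $|(v_u,v_v)|=2$, as in the $(1,1,2)$-triangle of type $\widetilde A_{2,1}$). Neither sign changes (which preserve $|(v_u,v_v)|$) nor adjoining isotropic kernel vectors (which change no inner products) can repair this, so in these very common situations the glued Gram matrix is simply not a quasi-Cartan companion of $Q$.

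The fix is essentially the paper's construction: index the orthonormal basis by the \emph{triangles} of the triangulation rather than building a root space block by block, and assign to the arc separating triangles $T_i$ and $T_j$ the vector $e_i+e_j$ or $e_i-e_j$, with the choice made so that two arcs lying in the same pair of triangles receive $e_i+e_j$ and $e_i-e_j$ when their arrows cancel (inner product $0$) and $e_i+e_j$ twice when they form a double arrow (inner product $2$). All vectors lie in a Euclidean space, so positive semi-definiteness is immediate, and no gluing, induction, or sign-matching is needed. If you insist on a block-by-block formulation, the vector at a shared white vertex must be chosen with knowledge of \emph{both} incident blocks, which is exactly what the triangle-indexed construction accomplishes.
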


\begin{proof}
As follows from the classification of quivers of finite mutation type (see~\cite{FeSTu}),
a quiver of finite mutation type is either of rank 2, or arises from a surface, or belongs to one of 11 exceptional mutation classes.   

For the exceptional quivers of finite and affine type the statement follows from Remark~\ref{fin,aff}.

For quivers of the types $E_6^{(1,1)}$,  $E_7^{(1,1)}$,  $E_8^{(1,1)}$ a positive semi-definite quasi-Cartan companion is constructed in Remark~\ref{ell}.

The mutation classes of quivers $X_6$ and $X_7$ are very small (containing $6$ and $2$ quivers respectively), for them one can check the statement directly. 

We are left to consider the case of quivers arising from triangulations of surfaces.
To build (a companion basis for) a quasi-Cartan companion of a quiver $Q$ originating from a given triangulation, we will assign vectors $v_1,\dots, v_n$ to the arcs of the triangulation, and the quasi-Cartan companion $A$ will be constructed as the Gram matrix of these vectors, i.e. $a_{ij}=(v_i,v_j)$. Let $t$ be the number of triangles in the triangulation. Consider a Euclidean  $t$-dimensional space with an orthonormal  basis $e_1,\dots,e_{t}$.
To construct the vectors $v_i$, we first assign the basis vectors $e_1,\dots,e_t$  to the triangles $T_1,\dots,T_t$ of the triangulation. To an arc  contained in the triangles $T_i$ and $T_j$ we assign a vector $e_i+e_j$ or $e_i-e_j$, as in Fig.~\ref{triang}. 
It is straightforward to see that the vectors constructed in this way provide a quasi-Cartan companion of the quiver $Q$. As they all lie in the Euclidean space, the quasi-Cartan companion is positive semi-definite. In view of Remark~\ref{basis}, we can also assume the vectors $v_1,\dots, v_n$ to be linearly independent.
  
\end{proof}  

\begin{figure}[!h]
\begin{center}
\epsfig{file=./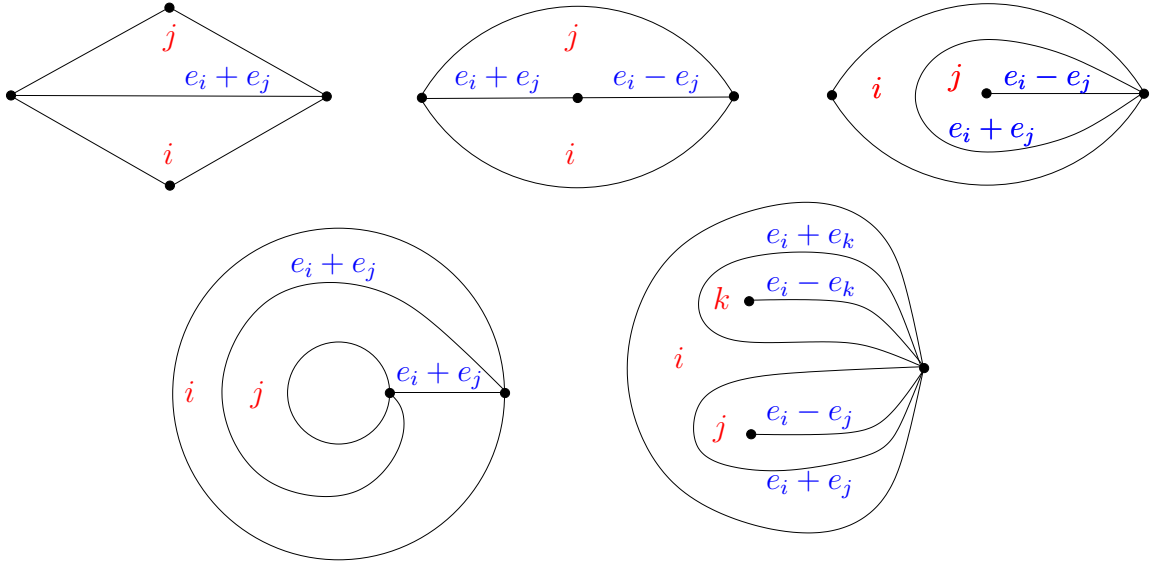,width=0.95\linewidth}
\put(-373,150){\color{red} $i$}
\put(-373,195){\color{red}$j$}
\put(-365,180){\color{blue} $e_i+e_j$}
\put(-221,150){\color{red} $i$}
\put(-221,195){\color{red} $j$}
\put(-203,180){\color{blue} $e_i-e_j$}
\put(-263,180){\color{blue} $e_i+e_j$}
\put(-105,175){\color{red} $i$}
\put(-76,180){\color{red} $j$}
\put(-55,180){\color{blue} $e_i-e_j$}
\put(-76,160){\color{blue} $e_i+e_j$}
\put(-105,175){\color{red} $i$}
\put(-76,180){\color{red} $j$}
\put(-55,180){\color{blue} $e_i-e_j$}
\put(-76,160){\color{blue} $e_i+e_j$}
\put(-365,60){\color{red} $i$}
\put(-340,60){\color{red} $j$}
\put(-325,109){\color{blue} $e_i+e_j$}
\put(-285,68){\color{blue} $e_i+e_j$}
\put(-180,73){\color{red} $i$}
\put(-165,47){\color{red} $j$}
\put(-165,95){\color{red} $k$}
\put(-145,53){\color{blue} $e_i-e_j$}
\put(-145,27){\color{blue} $e_i+e_j$}
\put(-145,102){\color{blue} $e_i-e_k$}
\put(-145,120){\color{blue} $e_i+e_k$}
\caption{Construction of vectors for quivers from triangulations. Triangles $i$ and $j$ are separated by an edge assigned with vector $e_i+e_j$, unless there are two common edges meeting at a puncture, see the configuration in the middle of the top row. An internal edge of self-folded triangle $j$ surrounded by triangle $i$ is assigned with vector $e_i-e_j$.}
\label{triang}
\end{center}
\end{figure}

\begin{cor}
  \label{char}
A connected quiver $Q$ of a rank higher than $2$ is mutation-finite if and only if every quiver in the mutation class of $Q$ has a positive semi-definite quasi-Cartan companion. 

\end{cor}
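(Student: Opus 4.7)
The plan is to prove the two implications separately, using Theorem~\ref{thm fin mut type} for one direction and an elementary semi-definiteness argument for the other.

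For the forward direction, assume $Q$ is mutation-finite. Then every quiver $Q'$ obtained from $Q$ by a sequence of mutations is connected, has the same rank $n \ge 3$, and has the same (finite) mutation class as $Q$; connectedness is preserved because mutation only reverses or adjusts arrows incident to a single vertex. Theorem~\ref{thm fin mut type} then applies to $Q'$ directly and supplies a positive semi-definite quasi-Cartan companion.

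For the converse, the key point is that positive semi-definiteness of a quasi-Cartan companion bounds arrow multiplicities. If $A'$ is a positive semi-definite quasi-Cartan companion of $Q'$ and $m_{ij}$ denotes the number of arrows joining vertices $i \ne j$ in $Q'$, then the principal $2{\times}2$ submatrix of $A'$ indexed by $\{i,j\}$ has diagonal entries $2$, off-diagonal entries $\pm m_{ij}$, and determinant $4 - m_{ij}^2$. As a principal submatrix of a positive semi-definite matrix it is itself positive semi-definite, forcing $m_{ij} \le 2$. Under the hypothesis of the corollary, this bound applies uniformly to every quiver in the mutation class of $Q$.

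To conclude, I would observe that there are only finitely many connected quivers on a fixed labelled vertex set $\{1, \dots, n\}$ in which every pair of vertices carries at most two arrows: each unordered pair admits only five possibilities (no arrow, one arrow in either direction, or two arrows in either direction), so there are at most $5^{\binom{n}{2}}$ candidate quivers. Since the entire mutation class of $Q$ is contained in this finite set, it is finite, and $Q$ is mutation-finite.

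I do not expect any real obstacle. The forward direction is essentially a restatement of Theorem~\ref{thm fin mut type}, while the converse reduces to the elementary $2{\times}2$ minor inequality together with a straightforward counting bound; the only facts invoked beyond the theorem are the preservation of connectedness under mutation and the standard fact that principal submatrices inherit positive semi-definiteness.
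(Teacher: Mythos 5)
Your proof is correct and follows essentially the same route as the paper: the forward direction is Theorem~\ref{thm fin mut type} applied to each quiver in the (finite) mutation class, and the converse rests on the observation that positive semi-definiteness forces every $2\times 2$ principal minor $4-m_{ij}^2$ to be non-negative, i.e.\ all arrow multiplicities to be at most $2$. The only difference is that where you close the argument with an explicit count of labelled quivers with bounded multiplicities, the paper instead invokes the criterion of~\cite[Corollary 8]{DO} in contrapositive form (a mutation-infinite quiver is mutation-equivalent to one containing an arrow of weight at least $3$); the two are logically equivalent, and your version is self-contained.
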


\begin{proof}
  In view of Theorem~\ref{thm fin mut type} it is sufficient to show that every mutation-infinite quiver $Q$ has a mutation-equivalent quiver not admitting a positive semi-definite quasi-Cartan companion. According to the well-known criterion (see e.g.~\cite[Corollary 8]{DO}), we can always find a quiver mutation-equivalent to $Q$ containing an arrow of weight at least $3$. Then any quasi-Cartan companion of such quiver is clearly indefinite.  

\end{proof}

One can ask a natural question whether there exists a {\it fully compatible } positive semi-definite quasi-Cartan companion for every mutation finite quiver.

\begin{remark}  
It is easy to check via case by case inspection (taking into account Remark~\ref{sign}) that
the quiver on Fig.~\ref{killhope} does not admit any fully compatible quasi-Cartan companion.

One can also check that this quiver corresponds to a closed torus with two punctures, and it  has one block decomposition only~\cite{Gu1,Gu2} (which consists of four blocks of type II). In particular, this quiver is not a subquiver of any larger quiver arising from a triangulation.

\end{remark}

\begin{figure}[!h]
\begin{center}
\epsfig{file=./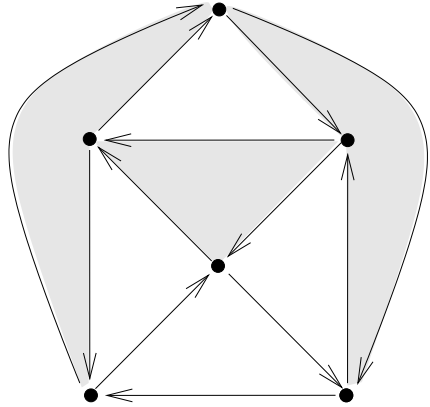,width=0.35\linewidth}
\caption{This quiver admits no fully compatible quasi-Cartan companion. Shaded triangles label non-oriented cycles.}
\label{killhope}
\end{center}
\end{figure}

In the next section we will show that this example is unique in the class of mutation-finite quivers.

\subsection{Fully compatible positive semi-definite companions}

The main result of this section is the following theorem.

\begin{theorem}
\label{thm fully}
Let $Q$ be a mutation-finite quiver with more than $2$ vertices. If $Q$ is not the quiver shown in Fig.~\ref{killhope} then $Q$ has a fully compatible positive semi-definite quasi-Cartan companion.

\end{theorem}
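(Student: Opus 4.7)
The plan is to go case by case along the classification of mutation-finite quivers provided by Theorem~\ref{class}, exhibiting a fully compatible positive semi-definite quasi-Cartan companion for every mutation class other than that of the quiver in Figure~\ref{killhope}.

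The easy cases are already handled by earlier results. For quivers of finite or affine Dynkin type I would pick an acyclic representative in the mutation class and invoke Theorem~\ref{ST} with all off-diagonal entries set to $-1$; positive semi-definiteness then follows from Remark~\ref{fin,aff}. The three elliptic types $E_6^{(1,1)}$, $E_7^{(1,1)}$, $E_8^{(1,1)}$ are covered by Remark~\ref{ell}, and the two remaining exceptional classes $X_6$ and $X_7$ (containing only $6$ and $2$ quivers respectively) can be disposed of by direct inspection.

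The main case is a quiver $Q$ arising from a triangulation $\Delta$ of a surface. I would take the companion basis constructed in the proof of Theorem~\ref{thm fin mut type}: orthonormal vectors $e_1,\dots,e_t$ indexed by the triangles of $\Delta$, and to each arc $\alpha$ shared between triangles $T_i$ and $T_j$ the vector $v_\alpha = e_i+e_j$ (with $v_\alpha = e_i - e_j$ when $\alpha$ is the inner side of a self-folded triangle, together with the further local modifications from Figure~\ref{triang}). The resulting Gram matrix is automatically positive semi-definite, so only full compatibility remains. I would verify this by classifying each chordless $3$-cycle of $Q$ according to the geometric configuration of its three arcs in $\Delta$ --- three sides of a common triangle, three arcs pairwise lying in three distinct triangles around a common marked point, or configurations involving one or two self-folded triangles --- and checking in each case that the counter-clockwise convention inside each triangle makes the cycle oriented and that the pairwise inner products of the corresponding $v_\alpha$ multiply to $+1$, as required by the $k$-compatibility condition. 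Non-oriented chordless cycles, which by Proposition~\ref{non-or} are simply laced and receive only an even number of arrows from each external vertex, are controlled by the same kind of inner-product calculation and give $a_{ij}a_{jk}a_{ki}\le 0$.

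The hard part is arranging the local sign choices so that they glue consistently through the block decomposition of $Q$ into the blocks of Figure~\ref{blocks-list}, and showing that the killhope quiver is the sole mutation-finite obstruction. The strategy is to exploit the freedom in Remark~\ref{sign} (sign changes of individual basis vectors, which flip the signs of a whole row and column) together with, when available, the flexibility of choosing between different block decompositions of $Q$. For every mutation-finite $Q$ other than killhope, I expect this freedom to suffice to arrange the signs compatibly around every $3$-cycle simultaneously. The killhope quiver fails precisely because it corresponds to a closed torus with two punctures and admits a unique block decomposition consisting of four blocks of type II glued cyclically, so that the cyclic constraint forces a sign inconsistency on one of its $3$-cycles that cannot be repaired by any adjustment via Remark~\ref{sign}; pinpointing this rigidity and showing it is the only obstruction is where I would expect the most work to lie.
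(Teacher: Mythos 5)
Your treatment of the exceptional classes (finite, affine, elliptic via Remark~\ref{ell}, and $X_6$, $X_7$ by inspection) matches the paper's Lemma~\ref{exceptional} and is fine. The problem is the surface case, where your proposal stops at exactly the point where the proof actually begins. The companion basis from Theorem~\ref{thm fin mut type} assigns $e_i+e_j$ to (almost) every arc, so any two arcs of a common triangle have inner product $+1$; this makes every oriented $3$-cycle compatible, but for a \emph{non-oriented} chordless $3$-cycle the product $a_{ij}a_{jk}a_{ki}$ equals $+1$ where $k$-compatibility demands it be $\le 0$. So the construction fails outright on every non-oriented triangle, sign adjustments are unavoidable, and the whole content of the theorem is that these local adjustments can be made globally consistent for every surface quiver except the one in Fig.~\ref{killhope}. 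You acknowledge this ("I expect this freedom to suffice") but supply no mechanism for it, and Remark~\ref{sign} by itself is visibly insufficient: flipping the sign of a basis vector changes an entire row and column at once, so fixing one bad cycle can break another, and the killhope quiver is precisely a case where no sequence of such flips works. Asserting that killhope is the only obstruction is the theorem, not an observation.

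For comparison, the paper's route through the surface case is: (i) introduce the notion of a \emph{disjoint support companion basis} and prove a gluing lemma (Lemma~\ref{disjoint}) showing that two such bases on subquivers meeting in an arrowless set of vertices can be merged; (ii) for quivers built from blocks of type II only, induct on the number of non-oriented triangles --- the base case (Lemma~\ref{no_nonor}) is your all-plus construction, and the inductive step (Lemma~\ref{step}) excises the configuration $C$ of three type-II blocks around a chosen non-oriented triangle, exhibits explicit sign choices for $C$ and for the finitely many ways a further triangle can attach to $C$ along one, two, or three vertices, and glues back via the disjoint-support lemma; the three-vertex attachment with no open vertices is exactly killhope; (iii) reduce blocks of types I and IV to type II by substitution, and handle IIIa, IIIb, V by another application of the gluing lemma. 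The disjoint-support condition is the bookkeeping device that makes the local-to-global step legitimate; without it, or some substitute, your per-cycle verification does not assemble into a proof.
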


We need to show the statement for quivers originating from triangulations and for quivers from eleven exceptional mutation classes.

\begin{lemma}
\label{exceptional}
Theorem~\ref{thm fully} holds for quivers from  eleven exceptional mutation classes.

\end{lemma}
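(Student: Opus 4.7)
The plan is to split the proof into three sub-cases matching the structure of the exceptional list, treating finite/affine, elliptic, and the two $X$-types separately, and in each case relying on tools already assembled in Section~\ref{semi-def}.

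First, for the six classes $E_6, E_7, E_8, \widetilde E_6, \widetilde E_7, \widetilde E_8$ I would pick the acyclic orientation of the corresponding (affine) Dynkin diagram as a representative of the mutation class and apply Theorem~\ref{ST} with all arrows labelled by the negative sign. That theorem guarantees that the resulting quasi-Cartan companion stays fully compatible along every mutation sequence, and Remark~\ref{fin,aff} records that it is positive definite in the finite case and positive semi-definite in the affine case. Thus every quiver mutation-equivalent to one of these six exceptional types admits a fully compatible positive semi-definite companion.

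Second, for the elliptic classes $E_6^{(1,1)}, E_7^{(1,1)}, E_8^{(1,1)}$ I would start from the companion basis for $\widetilde E_i$ produced in the first step and apply the duplication construction of Remark~\ref{ell}: to the vertex obtained by adding back the missing end of the double arrow, assign a copy of the vector $v_i$ already attached to its neighbour, and enlarge the ambient space by an orthogonal kernel direction as in Remark~\ref{basis}, so that the resulting companion basis is linearly independent. The Gram matrix is positive semi-definite by construction because all vectors lie in a Euclidean space (extended by a null direction). Full compatibility under every mutation is then an assertion about the finite mutation class of each elliptic quiver, and one verifies it by a finite case analysis (the one alluded to in Remark~\ref{ell}).

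Third, for $X_6$ and $X_7$ the mutation classes are tiny (six and two quivers respectively), so I would simply exhibit, by direct inspection, an assignment of signs to the arrows of a chosen representative making the companion fully compatible, and verify that successive mutations preserve full compatibility by enumerating every vertex of every quiver in the class. Positive semi-definiteness of the mutated matrices is automatic from Remark~\ref{geometric realisation}, since mutation only rewrites the same quadratic form in a new basis.

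The main obstacle is the elliptic sub-case: there we have no conceptual structural argument akin to Theorem~\ref{ST}, and full compatibility has to be established vertex-by-vertex throughout each mutation class. The other two sub-cases are essentially direct applications of already-proved results (acyclic/Seven--Stella for finite and affine $E$-types) or small finite checks (for $X_6, X_7$).
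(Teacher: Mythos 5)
Your proposal is correct and follows essentially the same route as the paper, which disposes of the finite and affine $E$-types via Theorem~\ref{ST} and Remark~\ref{fin,aff}, the elliptic types via the duplication construction of Remark~\ref{ell} (with the same finite computational verification of full compatibility), and $X_6$, $X_7$ by direct inspection of their small mutation classes. The extra detail you supply (linear independence via Remark~\ref{basis}, semi-definiteness via Remark~\ref{geometric realisation}) is consistent with the paper's intent.
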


\begin{proof}
For finite, affine and elliptic quivers, the statement follows from Remarks~\ref{fin,aff} and~\ref{ell}.
For the quiver in mutation classes $X_6$ and $X_7$ the statement can be checked directly.

\end{proof}

Now, we are left to prove the theorem for the case of quivers from triangulations.

As in the proof of Theorem~\ref{thm fin mut type}, we will use vectors of the form $\pm e_i\pm e_j$, where   $e_1,\dots,e_n$ is an orthonormal basis of a Euclidean space (here vectors $\{e_i\}$ correspond to the triangles in the triangulation). In view of Remark~\ref{basis}, we assume the vectors are linearly independent.

We need the following technical definitions.

\begin{definition}
  If a vertex of $Q$ is assigned with a vector $v=\pm e_i \pm e_j$, we say that the set $\{e_i,e_j\}$ is the {\em support} of the vector $v$.
  
\end{definition}

\begin{definition}[Disjoint support companion basis]
Suppose that $Q$ is a quiver decomposed into blocks,  and suppose that vectors $\{v_k\}=\{ \pm e_{i_k} \pm e_{j_k}\}$ provide a companion basis for $Q$.
We say that $\{v_i\}$ is a  {\it  disjoint support companion basis} if for every open vertex $p_k$ of the block decomposition of $Q$ (i.e., an open vertex of some block which is not matched with any other) the following holds: if $p_k$ is not connected to any other open vertex in the block decomposition of $Q$ then the support of the vector  $v_k$ assigned to $p_k$ is not contained in the union of all supports of other vectors $\{v_i\}$ for $i\ne k$.


\end{definition}

We will use the following two technical lemmas concerning disjoint support companion bases.

\begin{lemma}
\label{bl_}
Every block has a fully compatible positive semi-definite quasi-Cartan companion with a disjoint support companion basis.

\end{lemma}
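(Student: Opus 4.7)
The plan is a direct case-by-case analysis of the six block types listed in Fig.~\ref{blocks-list}.

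First, for each block $B$ I would assign a basis vector $e_T$ of a Euclidean space to every (abstract) triangle of the elementary surface from which $B$ arises, and to every vertex of $B$ a vector of the form $\varepsilon\, e_T + \varepsilon'\, e_{T'}$, with $\varepsilon,\varepsilon'\in\{\pm 1\}$, following the prescription of Fig.~\ref{triang} (the self-folded configuration only enters in block V). This guarantees that the inner product $\langle v_i,v_j\rangle$ has the correct absolute value, equal to the number of arrows between $i$ and $j$ in $B$, so the Gram matrix of $\{v_i\}$ is automatically a quasi-Cartan companion of $B$. Since the $v_i$ live in a Euclidean space, the companion is positive semi-definite by construction.

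Next, I would fix the signs $\varepsilon,\varepsilon'$ so that the quasi-Cartan companion is fully compatible. By Remark~\ref{sign}, flipping the overall sign of a single $v_i$ toggles the signs of all off-diagonal entries in row/column $i$ simultaneously, so it is enough to exhibit one consistent sign pattern per block. For blocks I and II the block $B$ is a tree, so there are no $3$-cycles and every quasi-Cartan companion is fully compatible. For blocks IIIa, IIIb, IV, V I would choose the signs so that the two vectors attached to two arcs meeting inside a given triangle $T$ share the same contribution $+e_T$; a direct inspection of the (at most three) chordless $3$-cycles of $B$ shows that with this convention every oriented $3$-cycle has strictly positive triple product $a_{ij}a_{jk}a_{ki}$ of off-diagonal entries, while every non-oriented $3$-cycle has non-positive triple product.

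Finally, to upgrade the companion basis to a disjoint support one, for each open vertex $p_k$ of $B$ which is not joined in $B$ to any other open vertex I would (as in Remark~\ref{basis}) adjoin a fresh auxiliary basis vector $f_k$, orthogonal to everything previously constructed, and replace $v_k$ by $v_k+f_k$. This keeps all Gram pairings with the other vectors unchanged, so positive semi-definiteness and full compatibility persist, while $\mathrm{supp}(v_k)$ now contains the element $f_k$ which appears in no other support. The hard part will be the sign bookkeeping for blocks IV and V, where several chordless $3$-cycles coexist; however, the flexibility granted by Remark~\ref{sign} reduces the question to a finite combinatorial check, and the uniform convention above dispatches each block.
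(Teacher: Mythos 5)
Your overall strategy is the same as the paper's: the printed proof of Lemma~\ref{bl_} consists of nothing but the explicit lists of vectors $\pm e_i\pm e_j$ for the six blocks given in Fig.~\ref{blocks}, and your case-by-case derivation from the prescription of Fig.~\ref{triang} would reproduce essentially those lists. Two of your steps are wrong as written, though. Block II is an oriented $3$-cycle, not a tree, so it is not true that every quasi-Cartan companion of it is fully compatible: the companion basis $e_1-e_2$, $e_2-e_3$, $e_3-e_1$ has $a_{12}a_{23}a_{31}=-1<0$, which violates $k$-compatibility for an oriented cycle (this is precisely the example of Fig.~\ref{ex-mut}). Block II therefore must be covered by your sign convention (the all-plus choice $e_1+e_2$, $e_2+e_3$, $e_1+e_3$ works); it is instead blocks IIIa and IIIb that are trees, since their two black vertices are orthogonal ($e_2+e_3$ and $e_2-e_3$) and hence non-adjacent.

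The disjoint-support step is also problematic. Replacing $v_k$ by $v_k+f_k$ with $f_k$ a fresh vector orthogonal to everything else gives $\langle v_k+f_k,v_k+f_k\rangle=2+\|f_k\|^2$, so unless $f_k$ is isotropic (a radical vector as in Remark~\ref{basis}) you lose the condition $a_{kk}=2$; and even with an isotropic $f_k$ the vector is no longer of the form $\pm e_i\pm e_j$, so the notion of support as defined in the paper, and the gluing of Lemma~\ref{disjoint} (which identifies Euclidean basis vectors at matched open vertices), would have to be reworked. The paper's construction avoids this entirely: an open vertex, whose arc lies in only one triangle $T$ of the elementary surface, is assigned $e_T+e_{T'}$ where $e_{T'}$ is a fresh orthonormal basis vector playing the role of the missing second triangle. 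This keeps every vector of the form $e_i\pm e_j$ with norm $2$ and automatically places a basis vector in the support of that open vertex and of no other vertex, which is exactly the disjoint-support condition.
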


\begin{proof}
\label{bl}
The required companion bases are provided in Fig.~\ref{blocks}.

\end{proof}

\begin{figure}[!h]
\begin{center}
\epsfig{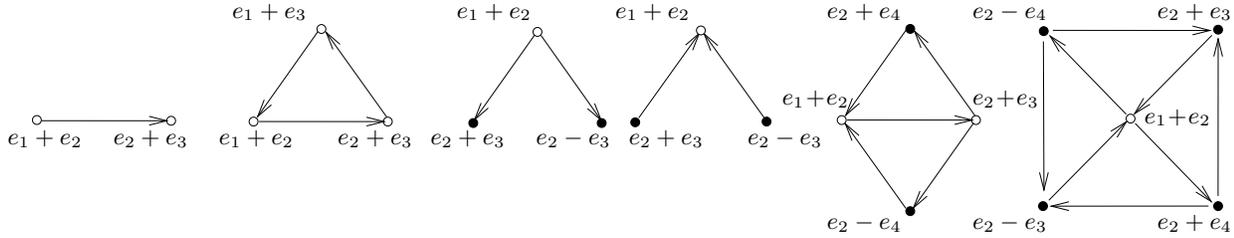}
\put(-460,27){\scriptsize $e_1+e_2$}
\put(-420,27){\scriptsize $e_2+e_3$}
\put(-380,27){\scriptsize $e_1+e_2$}
\put(-335,27){\scriptsize $e_2+e_3$}
\put(-375,75){\scriptsize $e_1+e_3$}
\put(-300,27){\scriptsize $e_2+e_3$}
\put(-260,27){\scriptsize $e_2-e_3$}
\put(-290,75){\scriptsize $e_1+e_2$}
\put(-225,27){\scriptsize $e_2+e_3$}
\put(-180,27){\scriptsize $e_2-e_3$}
\put(-230,75){\scriptsize $e_1+e_2$}
\put(-167,42){\scriptsize $e_1\!+\!e_2$}
\put(-95,42){\scriptsize $e_2\!+\!e_3$}
\put(-150,75){\scriptsize $e_2+e_4$}
\put(-150,-5){\scriptsize $e_2-e_4$}
\put(-95,75){\scriptsize $e_2-e_4$}
\put(-95,-5){\scriptsize $e_2-e_3$}
\put(-25,75){\scriptsize $e_2+e_3$}
\put(-25,-5){\scriptsize $e_2+e_4$}
\put(-30,35){\scriptsize $e_1\!+\!e_2$}
\caption{Fully compatible quasi-Cartan companions for blocks}
\label{blocks}
\end{center}
\end{figure}

\begin{lemma}
\label{disjoint}
Let $Q$ be a quiver decomposed into blocks. Suppose that $Q_1$ and $Q_2$ are subquivers of $Q$ such that $Q= Q_1\cup Q_2$, every block of the decomposition of $Q$ lies entirely either in $Q_1$ or in $Q_2$, and the intersection $Q_1\cap Q_2$ contains no arrows.
Suppose also that $Q_1$ and $Q_2$ have quasi-Cartan companions with  disjoint support companion bases.
Then $Q$ also has a quasi-Cartan companion with a disjoint support companion basis.

\end{lemma}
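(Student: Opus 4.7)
The plan is to build the companion basis for $Q$ by gluing ${\bf v}^{(1)}$ and ${\bf v}^{(2)}$ through a controlled identification of basis vectors across $U_1$ and $U_2$, one identification per shared vertex. Let $U_1, U_2$ denote the Euclidean ambient spaces with orthonormal bases $\{e_i^{(1)}\}$ and $\{e_j^{(2)}\}$ housing the given disjoint support companion bases. The goal is to produce a Euclidean space $U$ with orthonormal basis $\{e_i\}$ together with a companion basis $\{v_i\}$ for $Q$ consisting of vectors $\pm e_i \pm e_j$ that satisfies the disjoint support property.

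For each shared vertex $p \in P := Q_1 \cap Q_2$, write $v_p^{(1)} = \varepsilon_1 e_a^{(1)} + \varepsilon_2 e_b^{(1)}$ and $v_p^{(2)} = \varepsilon_3 e_c^{(2)} + \varepsilon_4 e_d^{(2)}$; we must arrange for these to coincide in $U$ by identifying pairs of basis vectors across $U_1, U_2$ (up to sign). We perform these identifications simultaneously across all $p \in P$ to define a quotient Euclidean space $U$, in which the remaining vectors of ${\bf v}^{(1)}$ and ${\bf v}^{(2)}$ yield the desired companion basis for $Q$.

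We next check that the identifications are compatible and preserve the required Gram-matrix structure. Pairwise compatibility across different $p, p' \in P$ follows from the orthogonality of the family $\{v_p^{(\ell)}\}_{p \in P}$ in each $U_\ell$, which is a consequence of the hypothesis that $Q_1 \cap Q_2$ contains no arrows. For cross-pairs $(i,j)$ with $i \in Q_1 \setminus Q_2$ and $j \in Q_2 \setminus Q_1$, no arrow exists in $Q$ (any block lies entirely in $Q_1$ or $Q_2$), and the construction preserves their orthogonality by ensuring the identifications only involve basis vectors already associated to shared vertices. Hence $\{v_i\}$ is a companion basis for $Q$.

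The final step is to verify the disjoint support property: for each open vertex $q$ of $Q$'s block decomposition not adjacent to any other open vertex of $Q$, the support of $v_q$ must contain an element not in the union of the supports of the other vectors. A key observation is that the no-arrows-in-$P$ hypothesis forces the shared vertices $P$ to form an independent set in every block, restricting how a non-shared open vertex $q \in Q_1$ can interact with shared vertices in its block. Combining this with the block-level disjoint support supplied by Lemma~\ref{bl_}, we argue that the exclusive basis element for $q$ (guaranteed by the structure of its block, extended across block boundaries via $Q_1$'s disjoint support condition) persists in $U$, because the identifications only rename basis vectors previously exclusive to shared vertices. The main technical obstacle is precisely this last verification: one must perform a case analysis on the block containing $q$ and its shared neighbors to ensure that an exclusive basis element survives the gluing in every configuration permitted by the hypotheses.
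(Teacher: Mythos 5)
Your proposal follows essentially the same route as the paper: the paper's proof likewise forms the sum $U+W$ of the two ambient spaces, identifies the basis vectors appearing in the supports of $v_{p_m}^{(1)}$ and $v_{p_m}^{(2)}$ at each shared vertex $p_m$, declares all remaining cross pairs orthogonal, and asserts that the result is a disjoint support companion basis for $Q$. The verification you single out as the ``main technical obstacle'' --- checking that cross inner products between $Q_1\setminus Q_2$ and $Q_2\setminus Q_1$ vanish and that an exclusive basis element survives for each isolated open vertex, which rests on identifying the exclusive basis vector on each side with the non-exclusive one on the other --- is not spelled out in the paper either, so your sketch matches the published argument in both strategy and level of detail.
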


\begin{proof}
  Suppose that $\{p_1,\dots, p_k\}=Q_1\cap Q_2$ are vertices in the intersection. Consider disjoint support companion bases ${\bf w}=\{\pm w_i\pm w_j\}$ and ${\bf u}=\{\pm u_i\pm u_j\}$ of $Q_1$ and $Q_2$. Let $W$ and $U$ be vector spaces spanned by vectors $\{w_i\}$ and $\{u_i\}$ respectively. Denote by $w_{i_m}$ ($u_{i_m}$ resp.) the vectors showing up in the expressions assigned to vertices $p_m$, $m=1,\dots,k$. Now take the vector space $U+W$ by identifying  $w_{i_m}=u_{i_m}$, and extend the quadratic forms from $U$ and $W$ to $U+W$ by $(w_i,u_j)=0$ for all the remaining $i,j$. This provides a required  disjoint support companion basis.

\begin{figure}[!h]
\begin{center}
\epsfig{file=./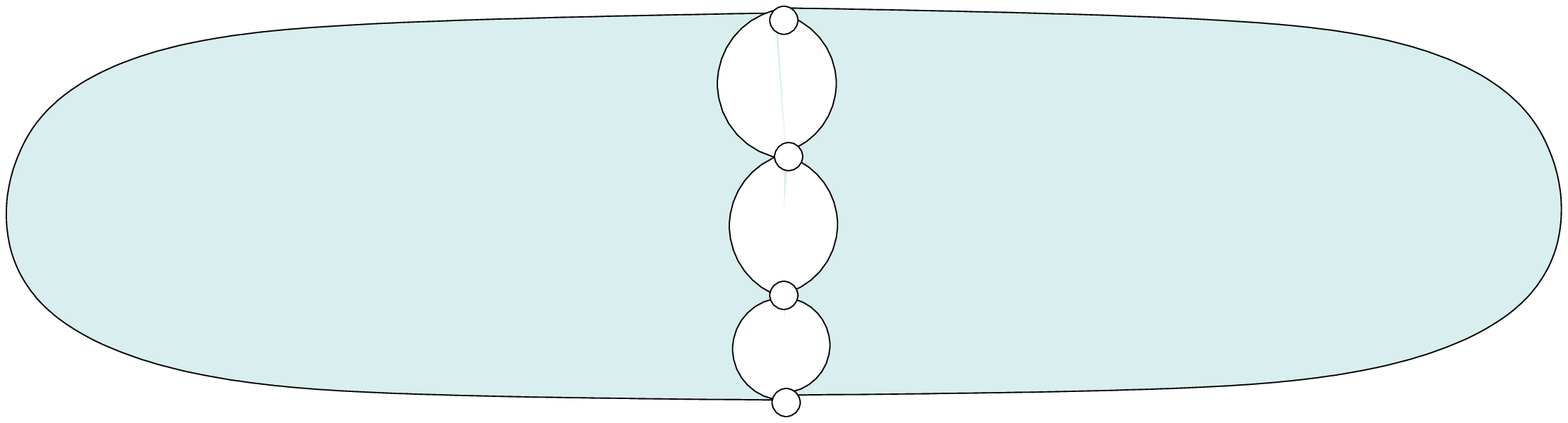,width=0.5\linewidth}
\put(-170, 40){$Q_1$}
\put(-80, 40){$Q_2$}
\put(-190, 20){\small $\pm w_i\pm w_j$}
\put(-80, 20){\small $\pm u_i\pm u_j$}
\caption{To the proof of Lemma~\ref{disjoint}.}
\label{disj}
\end{center}
\end{figure}

\end{proof}

To prove Theorem~\ref{thm fully}, we show first the statement for quivers with a block decomposition containing blocks of type II only. More precisely, for such quivers we will prove existence of a quasi-Cartan companion  with all required properties and additionally a disjoint support companion basis. This will be done by induction on the number of non-oriented triangles in the quiver (see Lemma~\ref{no_nonor} for the base of induction, i.e. the case all triangles are oriented, and Lemma~\ref{step} for the induction step).
We then show that one can also include blocks of type I and IV, and then finally we use Lemmas~\ref{bl_} and~\ref{disjoint} to conclude the theorem for block decompositions containing blocks of remaining types IIIa, IIIb and V.

\begin{lemma}
\label{no_nonor}
Let $Q$ be a quiver decomposed into several copies of block II. Suppose that $Q$ contains no non-oriented cycles of length 3. Then
$Q$ has a fully compatible positive semi-definite quasi-Cartan companion with a disjoint support companion basis.

\end{lemma}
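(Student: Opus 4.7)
The plan is to build a companion basis using the same general scheme as in the proof of Theorem~\ref{thm fin mut type}. For each block II $T$ in the decomposition I would introduce an orthonormal basis vector $e_T$ in a Euclidean space, and for each vertex $p$ of $Q$, which is a matched open vertex between two blocks $T_1(p)$ and $T_2(p)$, set $v_p := e_{T_1(p)} + e_{T_2(p)}$. The Gram matrix $A$ of the vectors $\{v_p\}$ will be the proposed quasi-Cartan companion.

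I would then verify the three required properties in turn. Positive semi-definiteness is automatic from the Euclidean construction. For $A$ to actually be a quasi-Cartan companion, the entry $\langle v_p, v_q\rangle$ must equal the arrow multiplicity $|b_{pq}|$ in $Q$; observe that $\langle v_p, v_q\rangle$ always equals the number of blocks jointly containing $p$ and $q$, which coincides with $|b_{pq}|$ in the ``standard'' cases (one common block giving a single arrow, or two common blocks with matching intra-block orientations giving a double arrow). For full compatibility, every off-diagonal entry $a_{pq}$ is a non-negative integer, so for an oriented $3$-cycle $(p,q,r)$ in $Q$ the product $a_{pq}a_{qr}a_{rp}$ is a product of positive integers and hence strictly positive; for any non-$3$-cycle triple one factor vanishes, making the product zero; and non-oriented $3$-cycles are excluded by hypothesis. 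Finally, the disjoint support condition is vacuous in this setting, since every open vertex of a block II lies in the same triangle as two other open vertices and is therefore connected to them in $Q$, so the clause ``not connected to any other open vertex'' is never met.

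The main obstacle is handling the ``canceling orientations'' configuration: two blocks sharing two matched vertices whose intra-block arrows oppose, so that the arrows cancel in $Q$ (giving $b_{pq} = 0$) while our construction still yields $\langle v_p, v_q\rangle = 2$, breaking the quasi-Cartan companion property. I expect this configuration to be incompatible with the standing hypotheses---the absence of non-oriented $3$-cycles together with the mutation finiteness coming from Theorem~\ref{thm fully}---because it would produce a fully oriented chordless cycle of length at least $4$ in $Q$, whose known block-decomposable surface realizations (as in the $D_n$ family) require non-block-II blocks. Proving that such cancellation cannot occur under the hypotheses of the lemma is the key technical point on which the argument rests.
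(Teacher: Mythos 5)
The pivot of your argument --- that the ``cancelling orientations'' configuration is incompatible with the hypotheses --- is false, and this is a genuine gap rather than a deferred technicality. Two blocks of type II glued along two vertices with opposing intra-block arrows yield an oriented chordless quadrilateral; this quiver is decomposed into copies of block II and contains no non-oriented $3$-cycles at all, so it satisfies the hypotheses of the lemma. (On the surface side it comes from two triangles sharing two edges that meet at a puncture, e.g.\ the two central triangles of a suitably triangulated once-punctured square; since the lemma feeds into Theorem~\ref{thm fully} for arbitrary mutation-finite quivers, punctured configurations cannot be excluded.) Your assertion that an oriented chordless cycle of length at least $4$ would force non-block-II blocks is precisely the false step: the oriented $4$-cycle is realized by two blocks of type II. The paper's proof handles this configuration instead of excluding it, by departing from the all-plus labelling exactly here: following the construction of Theorem~\ref{thm fin mut type} (Fig.~\ref{triang}), one of the two arcs shared by the two triangles receives $e_i-e_j$ rather than $e_i+e_j$, so the two vectors become orthogonal and the vanishing arrow is matched by a vanishing Gram entry. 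Full compatibility survives because the two vertices involved sit in an oriented quadrilateral and belong to no triangle of $Q$, so every arrow inside a triangle is still labelled ``$+$''.

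Two further points. Your construction assigns a vector only to matched vertices; an unmatched open vertex of a block $T$ must also receive one (in the paper's construction it is $e_T+e_{T'}$, where $T'$ is the adjacent triangle having two boundary sides --- effectively a fresh orthonormal basis vector). Relatedly, the disjoint support condition is not vacuous: ``open vertex of the block decomposition'' means \emph{unmatched}, and an unmatched vertex whose two block-mates are both matched to other blocks is adjacent to no other open vertex, so the clause genuinely applies to it and must be verified --- which is exactly what the fresh basis vector $e_{T'}$ provides.
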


\begin{proof}
We will use the same construction as in the proof of Theorem~\ref{thm fin mut type}, see Fig.~\ref{triang}.
Since we only have copies of block II in the decomposition, the arcs labelled by anything different from $e_i+e_j$ for some $i,j$ will only occur when two copies of block II are attached along two vertices to create a vanishing arrow. However, the corresponding vertex of the quiver is not a part of any triangle in the quiver (it is a vertex in an oriented quadrilateral).
By assumption, all  triangles in $Q$ are oriented, and as we see now, every arrow in a triangle is labelled by ``+''. Therefore the quasi-Cartan companion is fully compatible.

Furthermore, observe that any open vertex of the decomposition of $Q$ corresponds to an arc of a triangle $T_k$ such that two other arcs of $T_k$ lie at the boundary. Therefore, one arc of $T_k$ only corresponds to a vertex of $Q$, and thus the basis vector $e_k$ belongs to the support of the only vector corresponding to this open vertex, so we get  a disjoint support companion basis. 

%

\end{proof}

\begin{lemma}
\label{step}
Let $Q$ be a quiver decomposed into several copies of block II. Suppose that $Q$ is not the quiver  shown in Fig.~\ref{killhope}.  Then
$Q$ has a fully compatible positive semi-definite quasi-Cartan companion with a disjoint support companion basis.

\end{lemma}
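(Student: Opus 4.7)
The plan is to argue by induction on the number $N$ of non-oriented chordless $3$-cycles in $Q$; the base case $N=0$ is Lemma~\ref{no_nonor}, so assume $N>0$ and that $Q$ is not the killhope quiver of Fig.~\ref{killhope}.

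I would first reformulate the problem in terms of a sign function. In the companion-basis construction of Theorem~\ref{thm fin mut type}, each arc $\alpha$ of the underlying triangulation, separating triangles $T_i$ and $T_j$, is assigned a vector $\sigma_i(\alpha)e_i+\sigma_j(\alpha)e_j$, and Remark~\ref{sign} lets us treat the overall sign as a gauge, so the essential free parameter per arc is $\tau(\alpha)=\sigma_i(\alpha)\sigma_j(\alpha)\in\{\pm 1\}$. A direct check shows: (i) a $3$-cycle of $Q$ whose three vertices are the three arcs of a single triangle of the triangulation has arrow-sign product automatically equal to $+1$, irrespective of the $\tau$'s, so it is always full-compatible; (ii) every other $3$-cycle is formed by three arcs pairwise bounding three distinct triangles, and its arrow-sign product equals $\tau(\alpha)\tau(\beta)\tau(\gamma)$, which must be $+1$ for oriented and $-1$ for non-oriented such cycles. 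Full compatibility thereby reduces to solving a system of affine equations over $\mathbf{F}_2$ in the $\tau(\alpha)$.

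The inductive step would then consist of selecting a single arc $\alpha$ in some non-oriented $3$-cycle $C$ and flipping $\tau(\alpha)$ so as to strictly decrease the number of violated equations. The core structural input is Proposition~\ref{non-or}, which forces the non-oriented $3$-cycles sharing $\alpha$ to appear in controlled local configurations; combined with the block-II-only structure this lets one either find an arc $\alpha$ whose flip resolves $C$ without breaking any other equation, or, failing that, group several flips into a single coherent move with net effect of decreasing $N$. Induction then completes the construction.

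The disjoint-support property of Lemma~\ref{no_nonor} is preserved automatically since sign flips do not alter the supports of the basis vectors. The main obstacle, and the place where the killhope exclusion is used, is proving the existence of a suitable arc (or combined flip) --- equivalently, proving consistency of the $\mathbf{F}_2$-system whenever $Q\neq\mathrm{killhope}$. I expect this to follow from a finite case analysis of the ways type-II blocks can be glued around interlocking non-oriented $3$-cycles, showing that the unique obstruction class to solvability is realized precisely by the four-block gluing into the twice-punctured torus pictured in Fig.~\ref{killhope}.
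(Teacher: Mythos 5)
Your reduction of full compatibility to an affine system over $\mathbf{F}_2$ in the parameters $\tau(\alpha)=\sigma_i(\alpha)\sigma_j(\alpha)$ is correct (the two computations (i) and (ii) do hold for block-II-only quivers, modulo the extra constraints coming from pairs of triangles glued along two arcs), and it is a genuinely different framing from the paper's. But the proof has a real gap exactly where you put all the weight: the consistency of that system for every block-II quiver other than the one in Fig.~\ref{killhope} is asserted, not proved. Two concrete problems. First, your mechanism does not match the induction you announce: flipping $\tau(\alpha)$ does not change the number $N$ of non-oriented $3$-cycles of $Q$ --- that is an invariant of the quiver --- so what you are actually running is a greedy descent on the number of violated equations starting from the all-plus assignment. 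For a general affine system over $\mathbf{F}_2$ such a descent can stall at a nonzero local minimum even when the system is solvable, and the fallback of ``grouping several flips into a coherent move'' is left unspecified, so nothing guarantees termination at a solution. Second, solvability is equivalent to the target vector pairing trivially with every relation in the left kernel of the incidence matrix of arcs versus (non-single-block) $3$-cycles; this is a global condition on the cycle space, and Proposition~\ref{non-or} (evenness of the number of arrows from a vertex to a non-oriented chordless cycle) does not visibly control it. The ``finite case analysis'' you defer --- classifying those relations and checking the sign of the target on each, and showing the unique failure is Fig.~\ref{killhope} --- is the entire content of the lemma.

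For contrast, the paper sidesteps the global solvability question by a structural induction on $N$: it isolates an explicit neighbourhood $C$ of a chosen non-oriented triangle (the three type-II blocks carrying its edges, possibly enlarged by triangles attached along two or three of its vertices, as in Figs.~\ref{3blocks}, \ref{2vert}, \ref{3vert}), writes down a disjoint-support companion for each such configuration by hand, applies the induction hypothesis to $Q\setminus C$, which has strictly fewer non-oriented triangles, and glues the two companions with Lemma~\ref{disjoint}. The exclusion of Fig.~\ref{killhope} enters through the observation that the two bad configurations in the ``attached along three vertices'' case have no open vertices and so cannot occur as proper subquivers of a larger surface quiver --- an ingredient your sketch has no analogue of. Your linearization is attractive and could plausibly be completed (e.g.\ by identifying the $\mathbf{F}_2$-relations with cycles in the dual graph of the triangulation), but as written the key existence statement is missing, and your point that sign flips preserve supports, while true, only settles the easy half of the conclusion.
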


\begin{proof}
To show the statement we will proceed by induction on the number of non-oriented triangles in $Q$. 




Lemma~\ref{no_nonor} constitutes the base of the induction (no non-oriented triangles in $Q$). Suppose that the statement is known for every quiver with less than $n$ non-oriented triangles, and consider a quiver $Q$  with $k$ non-oriented triangles.

Let $p,q,r$ be vertices of some non-oriented triangle in $Q$. Clearly, each of the edges $pq,qr,rp$ belongs to its own block of type II  (two of these blocks may have a second vertex in common), so the configuration of blocks forming the triangle $pqr$ looks like one of the three configurations shown in  Fig.~\ref{3blocks}, up to a symmetry obtained by changing the direction of all arrows (the orientations of edges of the non-oriented triangle determine all other arrows in the three blocks, furthermore two of the three open vertices may be attached to each other, this gives 4 possibilities, two of which coincide up to reversing all arrows). Denote this configuration by $C$. Notice that as shown in  Fig.~\ref{3blocks}, the configuration itself has a fully compatible positive semi-definite quasi-Cartan companion with disjoint support companion basis.

\begin{figure}[!h]
\begin{center}
\epsfig{file=./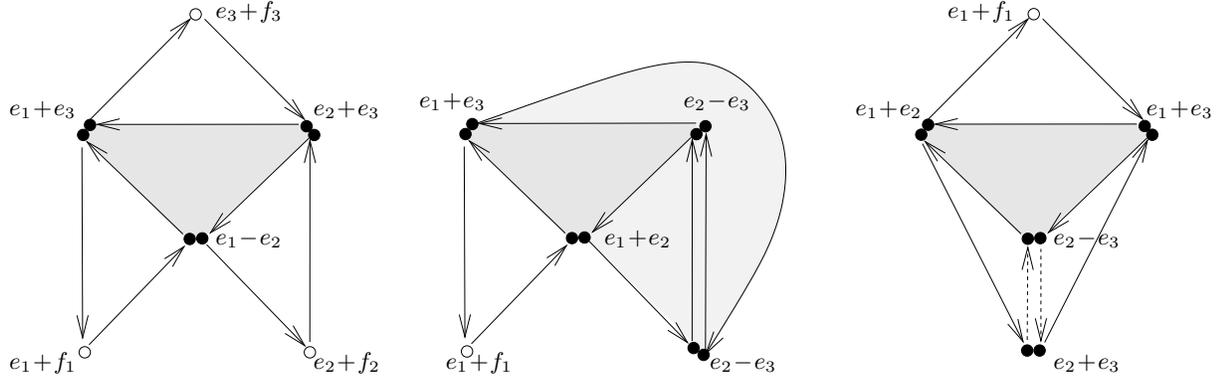,width=0.9\linewidth}
\put(-435,93){\scriptsize $e_1\!+\!e_3$}
\put(-320,93){\scriptsize $e_2\!+\!e_3$}
\put(-357,44){\scriptsize $e_1\!-\!e_2$}
\put(-357,130){\scriptsize $e_3\!+\!f_3$}
\put(-435,-3){\scriptsize $e_1\!+\!f_1$}
\put(-320,-3){\scriptsize $e_2\!+\!f_2$}
\put(-280,96){\scriptsize $e_1\!+\!e_3$}
\put(-180,96){\scriptsize $e_2\!-\!e_3$}
\put(-210,44){\scriptsize $e_1\!+\!e_2$}
\put(-270,-3){\scriptsize $e_1\!+\!f_1$}
\put(-170,-3){\scriptsize $e_2\!-\!e_3$}
\put(-80,130){\scriptsize $e_1\!+\!f_1$}
\put(-40,-3){\scriptsize $e_2\!+\!e_3$}
\put(-115,93){\scriptsize $e_1\!+\!e_2$}
\put(-5,93){\scriptsize $e_1\!+\!e_3$}
\put(-40,44){\scriptsize $e_2\!-\!e_3$}

\caption{Configurations containing a non-oriented triangle. }
\label{3blocks}
\end{center}
\end{figure}

An additional triangle attached to that configuration may be attached along one, two or three vertices.  
We will consider these three cases.

\medskip
\noindent
{\bf Case 1:}
First, suppose that every  triangle attached to $C$ is attached by at most one vertex. Then every edge of $Q$ either belongs to $C$ or two $Q\setminus C$ (here we understand $Q\setminus C$ as a subquiver spanned by all vertices contained in at least one block not lying in $C$). Notice that $Q\setminus C$ is a quiver from triangulation containing open vertices (so it is different from the quiver shown in  Fig.~\ref{killhope}).  Also  $Q\setminus C$ contains smaller number of non-oriented triangles than $Q$ (as it does not contain $pqr$). Hence, by the inductive assumption,  $Q\setminus C$  has a fully compatible positive semi-definite quasi-Cartan companion with a disjoint support companion basis.
Furthermore, since every triangle attached to $C$ is attached only along one vertex,  any two vertices in the intersection of $C$ and $Q\setminus C$ are not adjacent in $Q\setminus C$. By the inductive assumption, 
$C$ has a  fully compatible positive semi-definite quasi-Cartan companion with a disjoint support companion basis. By Lemma~\ref{disjoint}, this implies that the quiver $Q$ itself has  a  fully compatible positive semi-definite quasi-Cartan companion with  a disjoint support companion basis.

\medskip
\noindent
{\bf Case 2:}
Next, suppose that there is a triangle $T$ attached to $C$ along 2 vertices. Then the quiver $T \cup C$ will be one of the quivers shown in Fig.~\ref{2vert} (again, we identify configurations obtained by reversing all arrows).  Notice that each of these quivers has  a  fully compatible positive semi-definite quasi-Cartan companion with  a disjoint support companion basis (see Fig.~\ref{2vert}).
Furthermore, if no triangle is attached to two vertices of $T\cup C$ simultaneously, then the reasoning of Case 1 shows the statement for $Q$. 
If some triangle $T'$ is attached to the two open vertices of $T\cup C$, then the quiver  $T'\cup T\cup C$ also has
  fully compatible positive semi-definite quasi-Cartan companion with  a disjoint support companion basis (to see this one can identify $f:=f_1=f_2$ and assign the vector $f+f_0$ to the vertex of $T'$  which does not lie in $T\cup C$, see Fig.~\ref{2vert}). Then the same reasoning as in Case 1 shows the statement for $Q$.

\begin{figure}[!h]
\begin{center}

\epsfig{file=./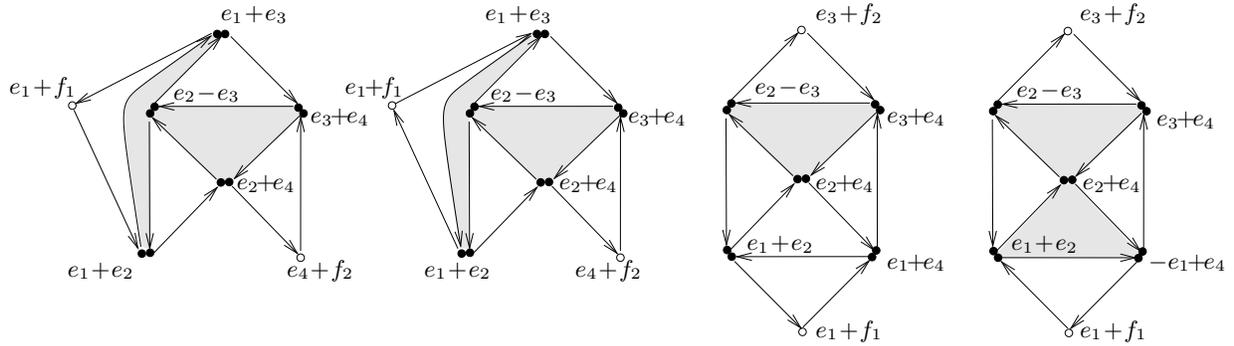,width=0.9\linewidth}
\put(-352,120){\scriptsize $e_1\!+\!e_3$}
\put(-318,81){\scriptsize $e_3\!\!+\!\!e_4$}
\put(-327,23){\scriptsize $e_4\!+\!f_2$}
\put(-432,92){\scriptsize $e_1\!+\!f_1$}
\put(-370,90){\scriptsize $e_2\!-\!e_3$}
\put(-410,23){\scriptsize $e_1\!+\!e_2$}
\put(-346,56){\scriptsize $e_2\!\!+\!\!e_4$}
\put(-305,92){\scriptsize $e_1\!\!+\!\!f_1$}
\put(-252,120){\scriptsize $e_1\!+\!e_3$}
\put(-275,23){\scriptsize $e_1\!+\!e_2$}
\put(-218,23){\scriptsize $e_4\!+\!f_2$}
\put(-224,56){\scriptsize $e_2\!\!+\!\!e_4$}
\put(-198,81){\scriptsize $e_3\!\!+\!\!e_4$}
\put(-250,90){\scriptsize $e_2\!-\!e_3$}
\put(-127,0){\scriptsize $e_1\!+\!f_1$}
\put(-127,120){\scriptsize $e_3\!+\!f_2$}
\put(-153,33){\scriptsize $e_1\!+\!e_2$}
\put(-100,26){\scriptsize $e_1\!\!+\!\!e_4$}
\put(-127,56){\scriptsize $e_2\!\!+\!\!e_4$}
\put(-100,81){\scriptsize $e_3\!\!+\!\!e_4$}
\put(-150,92){\scriptsize $e_2\!-\!e_3$}
\put(-27,0){\scriptsize $e_1\!+\!f_1$}
\put(-27,120){\scriptsize $e_3\!+\!f_2$}
\put(-53,33){\scriptsize $e_1\!+\!e_2$}
\put(-1,26){\scriptsize $-e_1\!\!+\!\!e_4$}
\put(-26,56){\scriptsize $e_2\!\!+\!\!e_4$}
\put(2,80){\scriptsize $e_3\!\!+\!\!e_4$}
\put(-51,91){\scriptsize $e_2\!-\!e_3$}

\caption{Triangle $T$ attached to $C$ by two vertices. }
\label{2vert}
\end{center}
\end{figure}

\medskip
\noindent
{\bf Case 3:} Finally, suppose that there is a triangle $T$ attached to $C$ along 3 vertices.
Then the quiver $T \cup C$ will be one of the quivers shown in Fig.~\ref{3vert}. The first of these quivers coincides with the exceptional quiver shown  in Fig.~\ref{killhope} and has no fully compatible quasi-Cartan companion. The second one has a fully compatible quasi-Cartan companion (see Fig~\ref{3vert} for the corresponding vectors). It has no open vertices, so it cannot be a part of any larger quiver from triangulation.

\begin{figure}[!h]
\begin{center}
\epsfig{file=./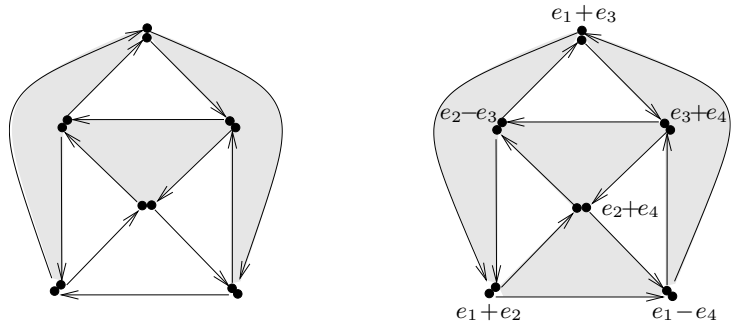,width=0.6\linewidth}
\put(-68,107){\scriptsize $e_1\!+\!e_3$}
\put(-104,-6){\scriptsize $e_1\!+\!e_2$}
\put(-30,-6){\scriptsize $e_1\!-\!e_4$}
\put(-49,33){\scriptsize $e_2\!\!+\!\!e_4$}
\put(-110,69){\scriptsize $e_2\!\!-\!\!e_3$}
\put(-23,69){\scriptsize $e_3\!\!+\!\!e_4$}
\caption{Triangle attached to $C$ by three vertices. }
\label{3vert}
\end{center}
\end{figure}

\end{proof}



\begin{proof}[Proof of Theorem~\ref{thm fully}:]

To complete the proof of Theorem~\ref{thm fully}, it is sufficient to show that there is a required quasi-Cartan companion for block decompositions containing blocks other than block of type II.
We will first show that one can add blocks of type I and IV and then that one can add blocks of types  IIIa, IIIb or V.

For the blocks of types I and IV, we can substitute such a block in the block decomposition by a block of type II (of course, this slightly changes the surface), we will obtain some different quiver $Q'$. After this substitution we will never obtain the quiver shown in Fig.~\ref{killhope} (as this quiver does not contain open vertices while the process of substitution does introduce such vertices). So, $Q'$ has a required quasi-Cartan companion with a disjoint support realisation. To get a quasi-Carton companion for $Q$, we just remove extra vertex for the case of the block of type I, and add an additional vector as in Fig.~\ref{bl_I_IV} for the case of the block of type IV.

Finally, to treat also the blocks of  types  IIIa, IIIb and V, let $Q_1$ be the union of such blocks.
Let $Q_2$ be the union of all other blocks in $Q$.
As each block of  type  IIIa, IIIb or V has a unique open vertex, the subquiver $Q_1\cap Q_2$ has no arrows.
Furthermore, as $Q_2$ is a quiver without blocks of type  IIIa, IIIb and V, it has a fully compatible positive
semi-definite quasi-Cartan companion with a disjoint support companion basis. In view of Lemma~\ref{bl_}, the quiver $Q_1$ also has a fully compatible positive semi-definite quasi-Cartan companion with a disjoint support companion basis.
So, by Lemma~\ref{disjoint} the quiver $Q$ itself  has a fully compatible positive semi-definite quasi-Cartan companion with a disjoint support companion basis, as required.

\end{proof}

\begin{figure}[!h]
\begin{center}
\epsfig{file=./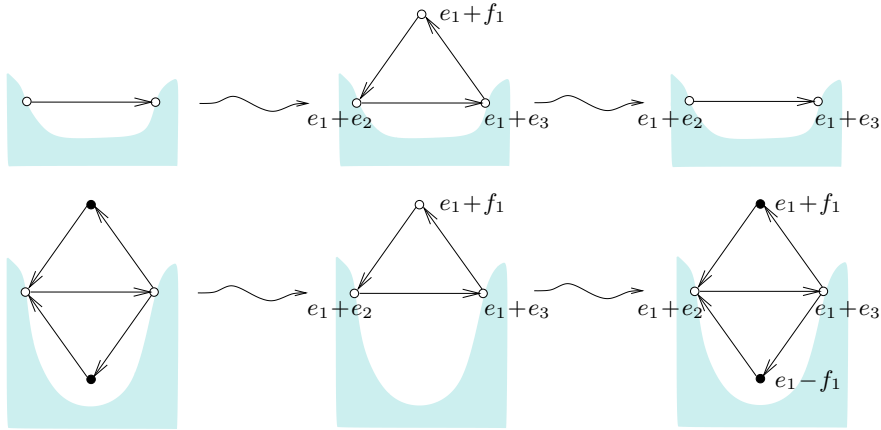,width=0.7\linewidth}
\put(-205,115){\scriptsize $e_1\!+\!e_2$}
\put(-138,115){\scriptsize $e_1\!+\!e_3$}
\put(-155,155){\scriptsize $e_1\!+\!f_1$}
\put(-80,115){\scriptsize $e_1\!+\!e_2$}
\put(-13,115){\scriptsize $e_1\!+\!e_3$}
\put(-205,43){\scriptsize $e_1\!+\!e_2$}
\put(-138,43){\scriptsize $e_1\!+\!e_3$}
\put(-155,83){\scriptsize $e_1\!+\!f_1$}
\put(-80,43){\scriptsize $e_1\!+\!e_2$}
\put(-13,43){\scriptsize $e_1\!+\!e_3$}
\put(-28,83){\scriptsize $e_1\!+\!f_1$}
\put(-28,16){\scriptsize $e_1\!-\!f_1$}

\caption{Constructing companions for quivers with blocks of type I and IV.} 
\label{bl_I_IV}
\end{center}
\end{figure}

\begin{remark}
A mutation of a fully compatible companion is not necessarily a fully compatible quasi-Cartan companion,
see Fig.~\ref{ex_puncture}.

\end{remark}

\begin{figure}[!h]
\begin{center}
\epsfig{file=./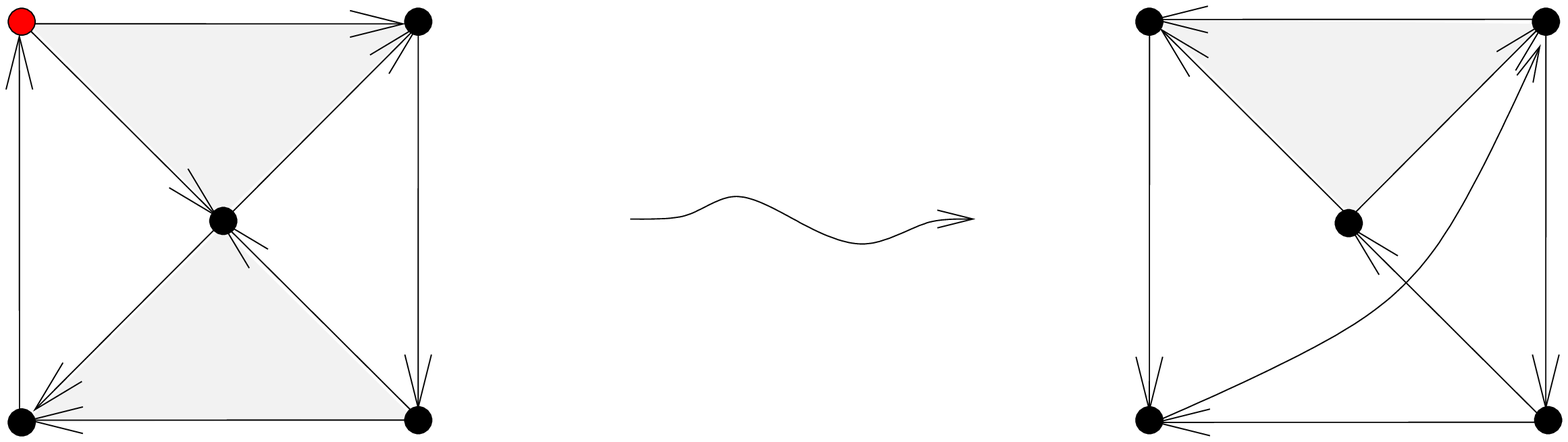,width=0.6\linewidth}
\put(-150,50){$\mu_1$}
\put(-270,78){$1$}
\put(-310,70){\small $e_3-e_1$}
\put(-310,0){\small $e_4-e_1$}
\put(-195,70){\small $e_2+e_3$}
\put(-195,0){\small $e_2+e_4$}
\put(-269,35){\small $e_1\!+\!e_2$}
\put(-110,70){\small $e_1-e_3$}
\put(-110,0){\small $e_4-e_3$}
\put(5,70){\small $e_2+e_3$}
\put(5,0){\small $e_2+e_4$}
\put(-68,30){\small $e_1\!+\!e_2$}
\caption{Mutation of a fully compatible quasi-Cartan companion is not always fully compatible: after the mutation there is an oriented triangle labeled with vectors $e_2+e_3$, $e_2+e_4$ and $e_4-e_3$.}
\label{ex_puncture}
\end{center}
\end{figure}

\section{Group from an (unpunctured) surface quiver}
\label{group}

The construction described in this section was initiated in~\cite{BM} for the case of quivers of finite type  and then extended in~\cite{FeTu}
for affine type quivers, quivers from unpunctured surfaces and exceptional mutation-finite quivers.

\subsection{Construction of the group}
\label{group-constr}

Here we present the construction for the case of unpunctured surface. 

Given a quiver $Q$ from an unpunctured surface, we construct a group $G=G(Q)$ as follows:
\begin{itemize}
\item the generators $s_1,\dots,s_n$ of $G$ correspond to the vertices of $Q$;
\item there are five types of relations:
\begin{itemize}
\item[(R1)] $s_i^2=e$ for all $i=1,\dots,n$;
\item[(R2)] $(s_is_j)^{m_{ij}}= e$  for all $i,j$ not joined by double arrow, where
$$
m_{i,j}=\begin{cases}
2,& \text{if $i,j,$ are not joined}; \\
3,& \text{if $i,j,$ are  joined by a single arrow};
\end{cases} 
$$  
\item[(R3)] (cycle relations) \\ $(s_1\ s_2s_3s_2)^2=e$ for every subquiver of $Q$ shown in Fig.~\ref{rel}(a) and \\
 $(s_1\ s_2s_3s_2)^3=e$ for every subquiver of $Q$ shown in Fig.~\ref{rel}(b)  respectively;
\item[(R4)]  ($\widetilde A_3$-relation) \\ $(s_1\ s_2s_3s_4s_3s_2)^2=e$ for every subquiver of $Q$ shown in Fig.~\ref{rel}(c);
\item[(R5)]  (handle relations) \\
 $(s_1\ s_2s_3s_4s_3s_2)^3=e$ for every subquiver of $Q$ shown in Fig.~\ref{rel}(d); \\
$(s_1\ s_2s_3s_4s_5s_4s_3s_2)^2=e$ for every subquiver of $Q$ shown in Fig.~\ref{rel}(e).

\end{itemize}

\end{itemize}

\begin{figure}[!h]
\begin{center}
\epsfig{file=./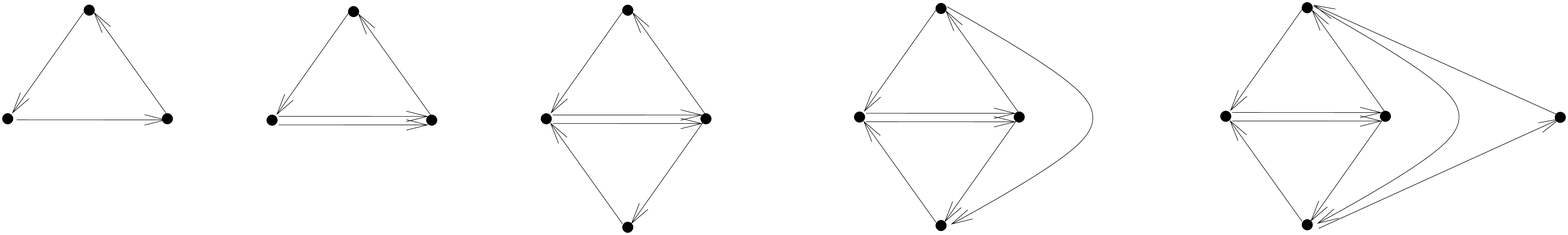,width=0.99\linewidth}
  \put(-458,25){\scriptsize $1$}
\put(-433,65){\scriptsize $2$}
\put(-400,25){\scriptsize $3$}
\put(-380,25){\scriptsize $1$}
\put(-358,65){\scriptsize $2$}
\put(-323,25){\scriptsize $3$}
\put(-300,25){\scriptsize $1$}
\put(-280,65){\scriptsize $2$}
\put(-245,25){\scriptsize $3$}
\put(-280,-3){\scriptsize $4$}
\put(-210,25){\scriptsize $1$}
\put(-190,65){\scriptsize $2$}
\put(-155,25){\scriptsize $3$}
\put(-190,-3){\scriptsize $4$}
\put(-105,25){\scriptsize $1$}
\put(-85,65){\scriptsize $2$}
\put(-50,25){\scriptsize $3$}
\put(-85,-3){\scriptsize $4$}
\put(1,25){\scriptsize $5$}
\put(-460,-43){\small (a)}
\put(-380,-43){\small (b)}
\put(-300,-43){\small (c)}
\put(-210,-43){\small (d)}
\put(-105,-43){\small (e)}
\put(-460,-23){\scriptsize  $(s_1\ s_2s_3s_2)^2=e$ }
\put(-380,-23){\scriptsize  $(s_1\ s_2s_3s_2)^3=e$}
\put(-300,-23){\scriptsize  $(s_1\ s_2s_3s_4s_3s_2)^2=e$ }
\put(-210,-23){\scriptsize  $(s_1\ s_2s_3s_4s_3s_2)^3=e$ }
\put(-105,-23){\scriptsize $(s_1\ s_2s_3s_4s_5s_4s_3s_2)^2=e$}
\put(-430,93){  Cycle relations: }
\put(-300,93){\normalsize  $\widetilde A_3$-relation: }
\put(-170,93){ Handle relations:}
\caption{Relations R3,R4,R5 for the group $G$.  }
\label{rel}
\end{center}
\end{figure}

\begin{theorem}[\cite{BM},\cite{FeTu}]
\label{G invar}
If $Q$ is a quiver arising from an unpunctured surface 
and $G=G(Q)$ 
is a group defined as above, then $G$
is invariant under the mutations of $Q$.

\end{theorem}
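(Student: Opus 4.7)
The plan is to reduce invariance under arbitrary sequences of mutations to invariance under a single mutation $\mu_k$, and then verify the latter by local analysis. Since $Q$ arises from a triangulation $T$ of an unpunctured surface $S$, the mutation $\mu_k$ is realized geometrically as the flip of the arc $e_k \in T$. This flip only alters $T$ inside the quadrilateral $Q_k$ formed by the two triangles sharing $e_k$, and consequently it only alters arrows of $Q$ that are incident to vertex $k$ or that join vertices corresponding to the other edges of $Q_k$.

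First I would construct a candidate isomorphism $\varphi : G(Q) \to G(\mu_k(Q))$ fixing every generator $s_i$ with $i \neq k$ and sending $s_k$ to a conjugate $w s_k w^{-1}$, where $w$ is an explicit word in the generators indexed by the non-flipped edges of $Q_k$. The geometric intuition is that the arc $e_k'$ obtained after the flip can be expressed as the image of $e_k$ under a composition of reflections in the other edges of $Q_k$, and $w$ is the corresponding product in $G$. The inverse homomorphism is given symmetrically, using the reverse flip.

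The bulk of the proof is then a verification that $\varphi$ descends from the free product of involutions to a homomorphism, i.e.\ that every relation R1--R5 of $G(Q)$ holds in $G(\mu_k(Q))$ after applying $\varphi$. Relations R1 are immediate. Relations R2 follow from examining, arrow by arrow, how mutation changes the adjacency between vertex $k$ and its neighbors, and checking that the braid relations for the conjugated generator $w s_k w^{-1}$ reproduce the R2 relations of $\mu_k(Q)$. Relations R3 (cycle), R4 ($\widetilde A_3$) and R5 (handle) require enumerating all local configurations of triangles around $Q_k$ which can produce such subquivers either in $Q$ or in $\mu_k(Q)$, and in each configuration verifying the corresponding identity in $G(\mu_k(Q))$ using the relations already imposed there. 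The finiteness of this case analysis is ensured by the block decomposition of quivers from unpunctured surfaces (Section~\ref{background}): only finitely many block adjacency patterns can occur in a neighborhood of $e_k$.

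The main obstacle will be the handle relations R5, since the associated subquivers have up to five vertices and so can involve a neighborhood that extends well beyond $Q_k$. A handle subquiver created by the flip need not have a visible counterpart in $Q$; to deal with this I expect to invoke the cycle and $\widetilde A_3$ relations, together with the explicit formula $\varphi(s_k) = w s_k w^{-1}$, to rewrite each new handle relation as a consequence of the relations already present in $G(Q)$. A cleaner alternative, which avoids much of the case analysis, would be to exhibit a topological model for $G(Q)$ defined directly from $S$ (and its marked points) and to argue that this model is independent of the triangulation; mutation invariance would then be a formal corollary. Either route establishes the single-mutation case, and invariance throughout the mutation class follows by induction on the length of the mutation sequence.
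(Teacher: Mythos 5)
Your overall architecture (reduce to a single mutation, write down an explicit isomorphism on generators, verify relations R1--R5 by a finite local case analysis over block configurations) is the right shape, and it matches how this theorem is actually proved in \cite{BM} and \cite{FeTu}. But the candidate isomorphism you propose is wrong, and the error is fatal rather than cosmetic. You cannot fix every generator $s_i$ with $i\ne k$: mutation at $k$ changes the number of arrows between pairs of vertices $i,j$ that are both adjacent to $k$ (with $i\to k\to j$), and hence changes the order of $s_is_j$ prescribed by relation (R2). Concretely, take $Q$ of type $A_3$ with arrows $1\to 2\to 3$ and mutate at $2$, obtaining an oriented $3$-cycle. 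In $G(Q)$ one has $(s_1s_3)^2=e$, since $1$ and $3$ are not joined; in $G(\mu_2(Q))\cong S_4$ the vertices $1$ and $3$ are joined by an arrow, the defining relation is $(t_1t_3)^3=e$, and $t_1t_3$ genuinely has order $3$. So no homomorphism can send $s_1\mapsto t_1$ and $s_3\mapsto t_3$, regardless of where $s_2$ goes. The accompanying geometric intuition is also reversed: under a flip it is the arcs crossing into $e_k$ (equivalently, the companion-basis vectors $v_i$ with $b_{ik}>0$) that get reflected in $v_k$, not $e_k$ that gets reflected in the others (cf.\ Remark~\ref{geometric realisation}).

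The correct map, recorded in the Remark immediately after Theorem~\ref{G invar} and used in \cite{BM,FeTu}, is $t_i=s_ks_is_k$ whenever $Q$ has an arrow from $i$ to $k$, and $t_i=s_i$ otherwise (in particular $t_k=s_k$). With this substitution the order-changing pairs $(i,j)$ above are handled correctly, because at least one of $t_i,t_j$ is conjugated by $s_k$. The remainder of the proof is then essentially as you describe: a case-by-case verification that each defining relation of $G(\mu_k(Q))$, rewritten through this substitution, is a consequence of the relations of $G(Q)$ (and vice versa), with the handle relations (R5) being the most delicate because the relevant subquivers extend beyond the flipped quadrilateral. Your ``cleaner alternative'' of a triangulation-independent topological model is appealing but is not carried out here and is not how the cited proofs proceed; as written it does not substitute for the case analysis.
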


\begin{remark}
If $Q$ is a quiver and $\mu_k(Q)$ is a mutation of $Q$ in the direction $k$, then the isomorphism of groups $G_1=G(Q)$ and $G_2=G(\mu_k(Q))$ 
can be described as follows. If $\{s_i\}$ and $\{t_i\}$ are the generators of $G(Q)$ and $G(\mu_k(Q))$  described above, then
$$
t_i=\begin{cases}
s_ks_is_k,& \text{if $Q$ contains an arrow from $i$ to $k$};\\
s_i,      & \text{ otherwise}.      
\end{cases}
$$

\end{remark}

\begin{remark}
Theorem~\ref{G invar} implies that the group $G$ does not depend on the choice of triangulation of the corresponding surface $S$, so one can say that $G=G(S)$ is the group assigned to the topological surface $S$. 

\end{remark}

\begin{remark}
Relations (R1),(R2) define a Coxeter group, the other relations turn $G$ into a quotient of a Coxeter group.
However, in the cases of finite and affine quivers, by choosing an acyclic representative $Q$ one can see that there are no non-Coxeter relations in $G$, so $G$ is a Coxeter group itself. Applying Theorem~\ref{G invar} we see that $G$ is a Coxeter group for any quiver of finite or affine type.  

\end{remark}

\subsection{Moving the marked points from one boundary component to another}

In Section~\ref{group-constr} we recalled the construction of a group for any quiver from unpunctured surface $S$.
It is natural to ask whether the group $G=G(S)$ uniquely defines the surface $S$.
The main result of this section indicates that it does not: one can move boundary marked points from one boundary component to another without changing the group.

\begin{theorem}
\label{boundaries}
Let $S_{g,b}$ be an unpunctured surface of genus $g$  with $b$ boundary components. Then the group $G(S_{g,b})$ does not depend on the distribution of the boundary marked points along the boundary components of the surface (depending only on $g,b$ an the total number of boundary marked points). 

\end{theorem}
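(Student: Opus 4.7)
The plan is to reduce the theorem to the elementary case of moving a single marked point from one boundary component to another, and then to establish that this elementary move preserves $G$ by exhibiting triangulations whose quivers differ only in a bounded local region, together with an explicit Tietze-equivalent presentation across that local modification. Any two distributions of $n$ total marked points across $b$ boundary components are connected by a sequence of moves of the form $(\ldots, k_i, \ldots, k_j, \ldots) \to (\ldots, k_i+1, \ldots, k_j-1, \ldots)$ with $k_j \geq 2$, so it suffices to treat this one case.

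For that case, I would choose an arc $\alpha$ in the surface from a point $q \in \partial_i$ to the marked point $p \in \partial_j$ that we wish to move. Using $\alpha$ as a scaffold, I would build a triangulation $T$ of the original surface and a triangulation $T'$ of the modified surface that agree outside a small neighborhood $N$ of $\alpha$; inside $N$, the two triangulations should be the natural triangulations of a small annular strip between $\partial_i$ and $\partial_j$ realizing the two marked-point counts. The associated quivers $Q(T)$ and $Q(T')$ then coincide outside a subquiver of affine type $\widetilde A$. The crucial input is that, for the annulus itself (the case $g = 0$, $b = 2$), $G$ is a Coxeter group of type $\widetilde A_{n-1}$, as noted in the remark after Theorem~\ref{G invar}, and in particular depends only on $n = k_i + k_j$. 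This provides the local group isomorphism on generators corresponding to vertices inside $N$, and the task is to show it extends consistently when $N$ is glued back into $S$.

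The main obstacle will be the case analysis needed to check that the non-Coxeter relations (R3)--(R5), especially the handle relations (R5), match up under the identification of generators. The relations that involve no vertex in $N$ are preserved automatically, and those entirely inside $N$ are handled by the annular case. The delicate point is relations that straddle $\partial N$: cycle, $\widetilde A_3$, and handle configurations from Figure~\ref{rel} that have some arrows inside $N$ and others outside. By arranging $\alpha$ and $N$ to be as topologically simple as possible (for instance forcing $N$ to be an embedded disk neighborhood of $\alpha$ disjoint from any handle of $S$), one minimizes the number of such straddling configurations, and in each remaining case the relation transforms into a relation on the modified side by direct substitution via the annular isomorphism. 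A finite check, organized according to which blocks of the block decomposition meet $\partial N$, should then complete the argument; I expect this case analysis to be the most technically demanding part of the proof.
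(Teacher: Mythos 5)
Your overall strategy---reduce to moving one marked point at a time, localize the change to a small subsurface, and match the presentations across the local modification---is the same strategy the paper uses. But there are two concrete gaps. First, the local model is misidentified. A subsurface supporting a move of a marked point from $\partial_j$ to $\partial_i$ must contain (pieces of) both boundary circles and an arc joining them, and the relevant local piece is a $3$-holed sphere $S_{0,3}$, not an annulus: a regular neighbourhood of $\partial_i\cup\alpha\cup\partial_j$ has a third boundary circle along which the rest of $S$ is attached. The fact you want to lean on---that $G$ of an annulus is the Coxeter group $\widetilde A_{n-1}$ and hence depends only on $n$---cannot supply the local isomorphism, because the group of the actual local piece is a proper quotient of a Coxeter group (it has cycle and $\widetilde A_3$-relations), and its independence of the distribution is precisely the nontrivial content. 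The paper proves this directly (Lemmas~\ref{l 13-22}--\ref{3}): for explicit triangulations of $S_{0,3}$ it exhibits the substitution $t_5=s_4s_5s_4$, $t_i=s_i$ otherwise, and checks relation by relation that the presentation of $G(S_{0,3};1,1,3)$ rewrites as that of $G(S_{0,3};1,2,2)$. The essential phenomenon, which ``direct substitution via the annular isomorphism'' glosses over, is that the \emph{types} of relations trade places under this substitution (e.g.\ the Coxeter relation $(s_5s_6)^2=e$ becomes the cycle relation $(t_4t_5t_4\,t_6)^2=e$ and vice versa); an abstract isomorphism of annular groups gives you no control over which generators go where, and hence no way to extend it compatibly across $\partial N$.

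Second, your scheme silently fails when $b=2$. The attach-the-rest-of-the-surface argument (the paper's Lemma~\ref{3}) glues the complement onto a boundary segment of the \emph{third} boundary component of the local $S_{0,3}$, away from the modified region; with only two boundary components there is no such segment, and cutting out a pair of pants containing both boundary circles of $S_{g,2}$ leaves a subsurface with an unmarked boundary circle, which cannot be triangulated. The paper handles $b=2$ by an entirely separate induction on genus (Lemma~\ref{2}): it writes $G(S_{g+1,2})$ as an amalgamated product of $G(S_{1,1})$ and the group of a genus-$g$ piece over the subgroup $\langle s_{\alpha_1},s_{\alpha_2}\rangle$, with the annulus ($g=0$, where $G\cong\widetilde A_{k+l}$) as the base case---this is the only place where the Coxeter fact about annuli actually enters. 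You would need to add both the explicit $S_{0,3}$ computation and a separate treatment of $b=2$ for the proposal to go through.
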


Denote by $G(S_{g,b};k_1,\dots,k_b)$ the group constructed from  $S_{g,b}$  with $k_1,\dots,k_b$ marked points on the boundary components, $k_i\ge 1$.

\begin{lemma}
\label{l 13-22}
$G(S_{0,3};1,1,3)\cong G(S_{0,3};1,2,2)$.

\end{lemma}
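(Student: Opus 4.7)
By Theorem~\ref{G invar}, we are free to pick any triangulation of either surface to compute the corresponding group. The plan is to choose triangulations $T$ of $S_{0,3}$ with boundary distribution $(1,1,3)$ and $T'$ of $S_{0,3}$ with distribution $(1,2,2)$ that agree on a large common "core" and differ only in a small neighbourhood where a single boundary marked point has been pushed from one component to a neighbouring one along a cutting arc. The two resulting quivers $Q$ and $Q'$ each have $8$ vertices (by the Euler-characteristic count $n - 6 + 6g + 3b = 8$), and the triangulations will be chosen so that there is an obvious bijection between their arcs identifying everything outside a small disk.

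Concretely, I would first cut $S_{0,3}$ along an arc $\gamma$ going from the unique marked point on $B_1$ to one of the marked points on $B_3$. Extend $\gamma$ to a full triangulation $T$ of $(1,1,3)$; the triangles touching $\gamma$ form a local "fan" that contains all the boundary marked points of $B_3$, while the rest of the triangulation (the triangles meeting $B_1$ and $B_2$) forms a fixed "core". Now modify only the fan by sliding the endpoint of $\gamma$ across the adjacent boundary segment of $B_3$ onto $B_2$: this produces a triangulation $T'$ of $(1,2,2)$ that differs from $T$ only in the local fan, while the arcs meeting the core are combinatorially identical. This reduces the problem to a purely local computation at the fan.

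Next, I would tabulate the two quivers and present the isomorphism $\psi: G(S_{0,3};1,1,3) \to G(S_{0,3};1,2,2)$ by sending each generator $s_i$ associated with an arc of $T$ to the generator $s_i'$ associated with the corresponding arc of $T'$ under the bijection above. Relations of type (R1) and (R2) hold automatically on the core, because the arrows there coincide; the only potentially new adjacencies live in the fan, and a direct check shows the fans in $T$ and $T'$ produce the same collection of $2$-, $3$-, and $4$-cycles up to relabelling. For the non-Coxeter relations (R3)--(R5), I would enumerate every subquiver of the type in Fig.~\ref{rel} that occurs in $Q$ or $Q'$; those lying entirely inside the core match under $\psi$, and those touching the fan match in pairs after the local flip, because a fan with $k$ exterior marked points always produces the same list of chordless cycles (each individual triangle is oriented and simply-laced, hence contributes an (R3a) relation, while the longer configurations of (R4) and (R5) come from arcs crossing between the fan and the core and are combinatorially preserved by the sliding move).

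The main obstacle is the bookkeeping at the fan: the sliding move changes which boundary each endpoint of an arc lies on, and consequently changes the orientations of arrows in several triangles that share $\gamma$. One has to verify that after the reorientation every oriented or non-oriented $3$-cycle present in $Q$ is matched by a cycle of the same type in $Q'$ under $\psi$, and that the $\widetilde A_3$ and handle configurations of Fig.~\ref{rel}(c)--(e) occurring near the fan also transport correctly. Once this local verification is carried out, the global isomorphism $\psi$ follows, giving $G(S_{0,3};1,1,3)\cong G(S_{0,3};1,2,2)$.
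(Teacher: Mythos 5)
Your overall strategy --- pick matching triangulations of the two surfaces that agree outside a small region and send generator to generator --- contains a genuine gap, and it is precisely at the ``local fan'' verification that you defer. A map $\psi$ that sends each generator $s_i$ to the generator $s_i'$ of the corresponding arc and identifies \emph{every} defining relation of one presentation with a relation of the same type in the other would force the two quivers to have the same commuting pairs, the same braid pairs, the same double arrows and the same chordless-cycle data; but the marked surfaces $(S_{0,3};1,1,3)$ and $(S_{0,3};1,2,2)$ are not homeomorphic, so their quivers lie in different mutation classes and no choice of triangulations makes them match in this way. Sliding a marked point from one boundary component to another is not a flip, and it genuinely changes the adjacency structure near the affected arcs: concretely, in the triangulations of Fig.~\ref{13-22} the arcs $5$ and $6$ are non-adjacent on the $(1,1,3)$ side (giving the Coxeter relation $(s_5s_6)^2=e$) but form an oriented triangle with arc $4$ on the $(1,2,2)$ side (giving the cycle relation $(t_4t_5t_4\,t_6)^2=e$). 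No relabelling of arcs repairs this mismatch of relation \emph{types}.

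The paper's proof gets around exactly this obstruction by \emph{not} using a generator-to-generator map: it keeps $t_i=s_i$ for $i\ne 5$ but sets $t_5=s_4s_5s_4$, i.e.\ it conjugates one generator (the same kind of substitution that appears under mutation in Theorem~\ref{G invar}). Under this twisted correspondence a Coxeter relation on one side is traded for a cycle relation on the other and vice versa ($(s_3\,s_4s_5s_4)^2=e$ becomes $(t_3t_5)^2=e$), and one checks that the two lists of defining relations are carried onto each other. If you want to salvage your plan, you must allow the images of the generators near the modified region to be conjugates of generators rather than generators themselves, and then redo the relation-by-relation check with that twist built in; as written, the purely local, label-preserving matching cannot succeed.
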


\begin{proof}
Let $S_1$ and $S_2$ be the surfaces with $(1,1,3)$ and $(1,2,2)$ boundary marked points respectively. Consider the triangulations of $S_1$ and $S_2$ as in Fig.~\ref{13-22}.
Let $s_i$ be the generators for $G(S_1)$ corresponding to the triangulation, and let $t_i$ be the set of generators of $G(S_1)$ satisfying
$$
t_i=
\begin{cases}
s_4s_5s_4, & \text{if $i=5$,}\\
s_i, & \text{otherwise.}
\end{cases}
$$ 
We will show that the defining relations for  $G(S_1)$ in terms of $t_i$ will rewrite exactly as defining relations for $G(S_2)$ in the triangulation shown in  
Fig.~\ref{13-22} with $t_i$ corresponding to the $i$--th arc. Indeed, all the relations (except for ones corresponding to Coxeter relations and cycle relations for arcs in the dark quadrilaterals) coincide, and  all  other relations can be checked directly.
For example, the Coxeter relation $(s_5s_6)^2=e$ will turn into the cycle relation $( t_4t_5t_4\ t_6)^2=e$, while the cycle relation   $(s_3\ s_4s_5s_4)^2=e$ turns into a Coxeter relation $ (t_3t_5)^2=e$.
Moreover, every defining relation for  $G(S_2)$ is  obtained from the defining relations for $S_1$ in this way, which implies that the groups coincide.

\end{proof}

\begin{figure}[!h]
\begin{center}
\epsfig{file=./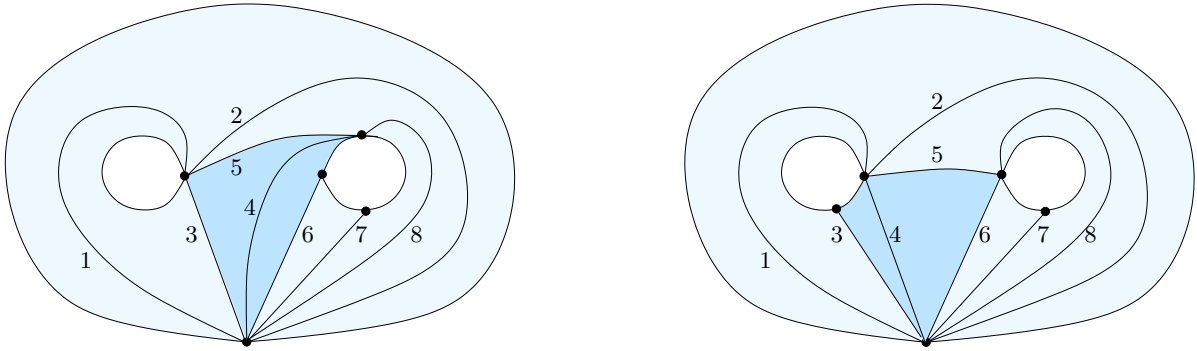,width=0.99\linewidth}
\put(-422,30){\scriptsize $1$}
\put(-382,40){\scriptsize $3$}
\put(-360,50){\scriptsize $4$}
\put(-365,65){\scriptsize $5$}
\put(-338,40){\scriptsize $6$}
\put(-318,40){\scriptsize $7$}
\put(-297,40){\scriptsize $8$}
\put(-365,85){\scriptsize $2$}
\put(-165,30){\scriptsize $1$}
\put(-138,40){\scriptsize $3$}
\put(-116,40){\scriptsize $4$}
\put(-100,70){\scriptsize $5$}
\put(-82,40){\scriptsize $6$}
\put(-60,40){\scriptsize $7$}
\put(-42,40){\scriptsize $8$}
\put(-100,90){\scriptsize $2$}

\caption{Moving a boundary marked point form one boundary component to another.}
\label{13-22}
\end{center}
\end{figure}

\begin{lemma}
\label{insert}
$G(S_{0,3};1,1,m)=G(S_{0,3};1,2,m-1)$ for any $m\ge 2$.

\end{lemma}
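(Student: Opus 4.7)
The plan is to mimic the proof of Lemma~\ref{l 13-22} by isolating the point-transfer move inside a small disk $D$ whose complement carries a triangulation common to both surfaces. Write $S_1 := S_{0,3;1,1,m}$ and $S_2 := S_{0,3;1,2,m-1}$. I would choose triangulations $\mathcal{T}_1$ of $S_1$ and $\mathcal{T}_2$ of $S_2$ that coincide on $S_{0,3}\setminus D$ and, inside $D$, restrict to the left and right pictures of Fig.~\ref{13-22} respectively. Concretely, $D$ absorbs one marked point from the large boundary component on each side, so $S_i\setminus D$ is the same polygonal region in both cases, triangulated by a common collection of $m-3$ fan arcs. Label arcs $1,\ldots,8$ inside $D$ as in Fig.~\ref{13-22} and arcs $9,\ldots,m+5$ as the shared fan arcs, and denote the corresponding generators of $G(S_1)$ and $G(S_2)$ by $\{s_i\}$ and $\{t_i\}$ respectively.

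Next, I would define $\varphi \colon G(S_1) \to G(S_2)$ on generators by $\varphi(s_5) = t_4 t_5 t_4$ and $\varphi(s_i) = t_i$ for all $i\ne 5$, in direct analogy with Lemma~\ref{l 13-22}. To show $\varphi$ extends to a group isomorphism, one verifies that every defining relation of $G(S_1)$ is sent to a consequence of the defining relations of $G(S_2)$, and symmetrically for $\varphi^{-1}$. Relations supported entirely on $\{s_1,\ldots,s_8\}$ are handled verbatim by the computation in the proof of Lemma~\ref{l 13-22}. Relations involving any fan generator $s_j$ with $j\ge 9$ transport trivially, because $\varphi$ fixes $t_j$ and also fixes $t_6, t_7, t_8$, while the subquivers induced by $\{t_j:j\ge 9\}\cup\{t_6,t_7,t_8\}$ in $\mathcal{T}_1$ and $\mathcal{T}_2$ are identical by construction.

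The main obstacle is confirming that no cycle, $\widetilde{A}_3$, or handle relation of type (R3)--(R5) has support crossing $\partial D$ in a way that links $s_5$ with a fan generator $s_j$, $j\ge 9$. For this it suffices to verify that arc $5$ of Fig.~\ref{13-22} lies in the interior of $D$ and is not contained in any triangle shared with the fan region. Inspection of the figure shows that arc $5$ is adjacent only to arcs $2,3,4,6$, all of which are interior to $D$, so the required geometric separation holds. Once this is established, every relation containing $s_5$ lives on $\{s_1,\ldots,s_8\}$ and is transferred by Lemma~\ref{l 13-22}; hence $\varphi$ is a well-defined group isomorphism and the lemma follows.
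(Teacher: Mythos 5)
Your proof is correct and takes essentially the same route as the paper's: both keep the configuration of Fig.~\ref{13-22} fixed, realize the extra marked points as a common fan of arcs inserted on the outer boundary component between arcs $7$ and $8$ (away from arc $5$), and observe that the relation check of Lemma~\ref{l 13-22} is unaffected. The only addition needed is the case $m=2$, which your fan construction (requiring $m\ge 3$) does not cover but which holds trivially since $(1,1,2)$ and $(1,2,1)$ describe the same surface.
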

\begin{proof}
To show the statement we will adjust slightly the proof of  Lemma~\ref{l 13-22}. Namely, we will insert 
one more arc between the arcs labeled $7$ and $8$ in both parts of Fig.~\ref{13-22} without changing anything else in the proof (see Fig.~\ref{13-22ins}, left). Repeating this several times we see that  
$G(S_{0,3},1,1,m)=G(S_{0,3},1,2,m-1)$ for all  $m\ge 3$. For $m=2$ the statement holds  trivially.

\end{proof}

\begin{figure}[!h]
\begin{center}
\epsfig{file=./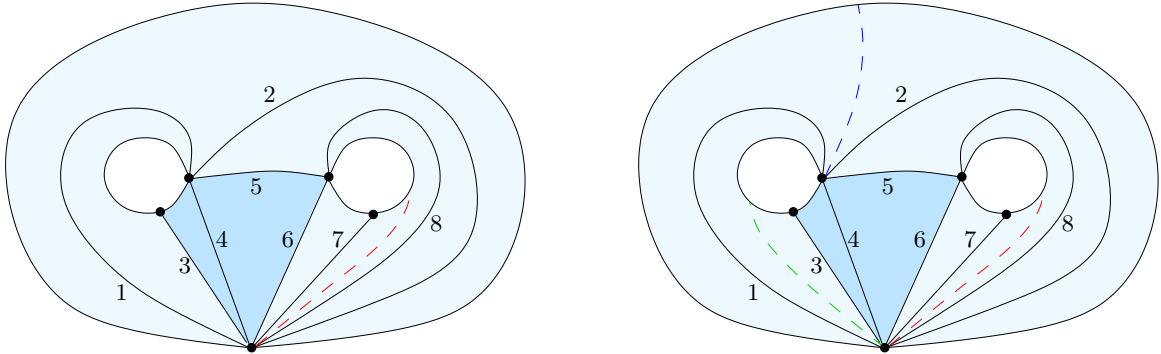,width=0.96\linewidth}
\put(-156,20){\scriptsize $1$}
\put(-132,30){\scriptsize $3$}
\put(-118,40){\scriptsize $4$}
\put(-105,60){\scriptsize $5$}
\put(-93,40){\scriptsize $6$}
\put(-74,40){\scriptsize $7$}
\put(-37,46){\scriptsize $8$}
\put(-100,95){\scriptsize $2$}
\put(-395,20){\scriptsize $1$}
\put(-371,30){\scriptsize $3$}
\put(-357,40){\scriptsize $4$}
\put(-344,60){\scriptsize $5$}
\put(-332,40){\scriptsize $6$}
\put(-313,40){\scriptsize $7$}
\put(-276,46){\scriptsize $8$}
\put(-339,95){\scriptsize $2$}

\caption{Inserting more boundary marked points to each of the three boundary components.}
\label{13-22ins}
\end{center}
\end{figure}

\begin{lemma}
If $k+l+m=k'+l'+m'$ then $G(S_{0,3};k,l,m)=G(S_{0,3};k',l',m')$.

\end{lemma}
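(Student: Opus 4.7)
The group $G(S_{0,3};k,l,m)$ is manifestly invariant under any permutation of the three boundary components (such a permutation relabels vertices of a block-decomposable quiver and preserves all defining relations), so it depends only on the multiset $\{k,l,m\}$. The plan is to establish the following \emph{sliding move}: for any $k \geq 2$ and $l, m \geq 1$,
\[
G(S_{0,3};k,l,m) \;=\; G(S_{0,3};k-1,l,m+1),
\]
and then iterate. Given an arbitrary triple $(k,l,m)$ with $k+l+m = N$, performing $k-1$ applications of the sliding move (with boundary $3$ playing the role of recipient) transforms it to $(1, l, N-l-1)$; after swapping boundaries $1$ and $2$ (permitted by symmetry), a further $l-1$ applications yield $(1, 1, N-2)$. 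Hence every such group equals $G(S_{0,3};1,1,N-2)$, which proves the lemma.

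To establish the sliding move, I would proceed exactly as in Lemma~\ref{l 13-22} and its refinement Lemma~\ref{insert}. Choose a triangulation $\mathcal{T}$ of $S_{0,3}$ with distribution $(k,l,m)$ and a triangulation $\mathcal{T}'$ with distribution $(k-1,l,m+1)$ that coincide outside a small disk containing the ``migrating'' arc, and inside this disk look precisely like the two configurations of Fig.~\ref{13-22}. The remaining arcs---accounting for the additional marked points on each of the three boundary components---are inserted in $\mathcal{T}$ and $\mathcal{T}'$ in parallel, outside the local region, exactly as in the proof of Lemma~\ref{insert}. Setting $t_i = s_i$ for every generator $i \neq 5$ and $t_5 = s_4 s_5 s_4$, verify that the defining relations of $G(S_{0,3};k,l,m)$ on $\{s_i\}$ translate to defining relations of $G(S_{0,3};k-1,l,m+1)$ on $\{t_i\}$: relations supported on generators outside the local disk are preserved verbatim, while the Coxeter/cycle exchanges inside the disk are the same local computations carried out in Lemma~\ref{l 13-22}---for instance, $(s_5 s_6)^2 = e$ becomes the cycle relation $(t_4 t_5 t_4\, t_6)^2 = e$, and $(s_3\, s_4 s_5 s_4)^2 = e$ becomes the Coxeter relation $(t_3 t_5)^2 = e$. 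Since $S_{0,3}$ has no handles, no relations of type (R5) arise; any $\widetilde A_3$-relations (R4) that might occur involve at most one modified generator and are handled by the same local identity. The bijection $s_i \leftrightarrow t_i$ thus descends to the required isomorphism.

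The main obstacle is the combinatorial design of the triangulations $\mathcal{T}$ and $\mathcal{T}'$ so that they differ only in the prescribed local region, with all additional boundary arcs inserted in a way that contributes only standard Coxeter relations disjoint from the local exchange. Once this setup is in place, the verification of the isomorphism is a direct extension of the arguments of Lemmas~\ref{l 13-22} and~\ref{insert}, with no new conceptual ingredient; the insertion step in particular was already carried out on the $m$-boundary in Lemma~\ref{insert}, and the same observation applies symmetrically to the $k$- and $l$-boundaries.
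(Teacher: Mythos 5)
Your proposal is correct and follows essentially the same route as the paper: both establish a one-point ``sliding move'' between two boundary components by extending the local exchange $t_5=s_4s_5s_4$ of Lemma~\ref{l 13-22}, with the extra marked points accommodated by inserting arcs in parallel outside the local region as in Lemma~\ref{insert} (the paper inserts them between the arcs labelled $1,2$ and $1,3$ and $7,8$), and then iterate. The only cosmetic difference is that you reduce everything to the normal form $(1,1,N-2)$ using the permutation symmetry, whereas the paper simply notes that the move lets one transfer a marked point between any two boundary components.
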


\begin{proof}
We will prove that $G(S_{0,3};k,l,m)=G(S_{0,3};k,l+1,m-1)$ for any $k,l\ge 1$, $m\ge 2$. For this we modify the reasoning in
the proof of  Lemma~\ref{l 13-22} once again: in Lemma~\ref{insert} we increased $m$ by inserting a new arc between arcs $7$ and $8$; similarly, 
we can increase number $k$ by inserting an arc between the arcs labelled $1$ and $2$, and  we can increase the number $l$ by inserting an arc between the arcs labelled $1$ and $3$ (see Fig.~\ref{13-22ins}, right). 

So, we can move a boundary marked point from any boundary component to any other boundary component.
\end{proof}

\begin{lemma}
\label{3}
If $k+l+m=k'+l'+m'$ and $b\ge 3$ then $G(S_{g,b};k,l,m)=G(S_{g,b};k',l',m')$.

\end{lemma}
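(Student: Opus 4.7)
The strategy is to reduce Lemma~\ref{3} to the $S_{0,3}$ result established above by a localization argument. Since any two distributions with the same total sum are connected by a sequence of elementary moves, each transferring one marked point between two boundary components, it suffices to prove: for $b\ge 3$, one can move a single marked point from one boundary component to another (using a third as auxiliary) without changing $G(S_{g,b})$. Iterating among the three chosen components yields the full statement of the lemma.

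To carry out a single move, fix three boundary components $B_1,B_2,B_3$ with $k,l,m$ marked points and all other marked-point counts on the remaining $b-3$ boundaries frozen. I would construct a triangulation $T$ of $S_{g,b}$ that contains, as a sub-triangulation $T_0\subset T$, a copy of the local configuration appearing in Fig.~\ref{13-22} (or its enlargement from Lemma~\ref{insert}) around $B_1\cup B_2\cup B_3$. The sub-triangulation $T_0$ is bounded in $S_{g,b}$ by a collection of arcs --- the ``open'' edges of the blocks forming $T_0$ --- which are shared with an arbitrary but fixed triangulation of the complementary region. The existence of such a $T$ is a standard fact: any surface with $b\ge 3$ admits a pair-of-pants decomposition in which one piece contains $B_1,B_2,B_3$, and one can triangulate the remainder compatibly.

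Now apply the same substitution as in Lemma~\ref{l 13-22}, namely $t_5=s_4s_5s_4$ and $t_i=s_i$ for all other generators (in particular for every generator outside $T_0$, and for the generators on the interface). The crucial point is that the modified generator $s_5$ corresponds to an arc lying strictly interior to $T_0$, so the interface arcs are fixed by the substitution. Consequently:
\begin{itemize}
\item Relations entirely inside $T_0$ transform precisely as in the proof of Lemma~\ref{l 13-22} (or Lemma~\ref{insert}), producing the defining relations of the local configuration with distribution $(k,l+1,m-1)$.
\item Relations entirely outside $T_0$ are unaffected, since their generators are untouched.
\item Relations of type (R2)--(R5) that cross the interface involve only generators fixed by the substitution (interface or outside), and are thus preserved verbatim.
\end{itemize}
The resulting presentation coincides with the one obtained from a triangulation of $S_{g,b}$ in which one marked point has been moved from $B_m$ to $B_l$, establishing the isomorphism of groups.

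The main obstacle is verifying the third bullet above: namely, that no cycle or handle relation of $G(S_{g,b})$ involves simultaneously an interior generator of $T_0$ (which is modified) and a generator lying outside $T_0$. This requires inspecting the shapes of the relations (R3)--(R5) and choosing $T_0$ so that the four or five arcs participating in any such relation either lie entirely inside $T_0$ or entirely in its complement-plus-interface. Thanks to the local character of the pictures in Fig.~\ref{rel} and the freedom in choosing the triangulation of the complement, this can always be arranged by inserting, if necessary, additional arcs separating the interior of $T_0$ from the rest --- exactly the same kind of ``insertion'' step used in Lemma~\ref{insert}.
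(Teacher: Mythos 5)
Your proposal is correct and is essentially the same argument as the paper's: the paper likewise reduces to the $S_{0,3}$ configuration of Fig.~\ref{13-22}, realizing $S_{g,b}$ by attaching triangulated pieces ($S_{1,1}$ or $S_{0,2}$) along the boundary segment between arcs $7$ and $8$, away from the shaded regions where the substitution $t_5=s_4s_5s_4$ acts, so that the verification of Lemma~\ref{l 13-22} carries over verbatim. Your interface-locality check is exactly the paper's observation that the attachment ``does not affect the shaded regions.''
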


\begin{proof}
Consider once more Fig.~\ref{13-22} and the proof of  Lemma~\ref{l 13-22}.
We can increase the genus or the number of boundary components of the surface $S$ by attaching a
triangulated surface $S_{1,1}$ (a torus with 2 boundary marked points on a unique boundary component) or $S_{0,2}$ (an annulus with 1 marked point on one boundary component and 2 marked points on the other).
We will attach this small surface  along one of its boundary segments to the boundary segment of $S$ lying in  Fig.~\ref{13-22} between arcs $7$ and $8$. As this does not affect the shaded regions, this will not affect the proof.

\end{proof}

It is left to consider the case when we cannot choose three boundary components like in  Lemma~\ref{l 13-22},
i.e. the case when we only have 2 boundary components (in case of $b=1$ there is nothing to prove).

\begin{lemma}
\label{2}
If $k+l=k'+l'$  then $G(S_{g,2};k,l)=G(S_{g,2};k',l')$.

\end{lemma}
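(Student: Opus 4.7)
My proof would follow the substitution template of Lemma~\ref{l 13-22}, with the missing third boundary component replaced by topology. I split according to the genus.

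\emph{Case $g=0$.} Here $S_{0,2}$ is an annulus, and the associated mutation class is the affine type $\widetilde{A}_{k-1,l-1}$, all of which lie in the single affine mutation type $\widetilde{A}_{k+l-1}$. Choosing an acyclic representative in the mutation class (an orientation of the Dynkin diagram $\widetilde{A}_{k+l-1}$) and invoking the Remark following Theorem~\ref{G invar}, no cycle or handle relations appear, so $G(S_{0,2};k,l)$ is exactly the affine Coxeter group of type $\widetilde{A}_{k+l-1}$. This group depends only on $k+l$, and Theorem~\ref{G invar} (mutation invariance) lets us transfer it back to any cyclic representative. Thus $G(S_{0,2};k,l) \cong G(S_{0,2};k',l')$ whenever $k+l = k'+l'$.

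\emph{Case $g\geq 1$.} It suffices to establish $G(S_{g,2};k,l) \cong G(S_{g,2};k-1,l+1)$ for $k\geq 2$, since iterating this move transforms any $(k,l)$ into any $(k',l')$ with $k+l = k'+l'$. I would replicate the argument of Lemma~\ref{l 13-22} with the following topological input: because $g\geq 1$, any pants decomposition of $S_{g,2}$ contains at least one pair of pants $P\subset S_{g,2}$ (the Euler characteristic is $-2g\leq -2$), and two of the three boundary curves of $P$ can be chosen to be interior simple closed curves lying along a handle. Choose a triangulation $T$ of $S_{g,2}$ whose restriction to $P$ reproduces Fig.~\ref{13-22} verbatim, where the arcs terminating on the ``third boundary'' of Fig.~\ref{13-22} instead terminate on an arc $\gamma$ cutting through the handle. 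Now the substitution $t_5 = s_4 s_5 s_4$ from Lemma~\ref{l 13-22} converts, locally in $P$, the defining Coxeter and cycle relations of $G(S_{g,2};k,l)$ into those of $G(S_{g,2};k-1,l+1)$ in a new triangulation $T'$ obtained by performing the marked-point move locally.

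The main obstacle is that arcs passing through the handle can participate in additional $\widetilde{A}_3$-relations (R4) and handle relations (R5) that had no analogue in Lemma~\ref{l 13-22}, where the ``third boundary'' was an actual boundary of $S_{0,3}$ and therefore carried no such relations. One must verify that each such extra relation is preserved by the substitution $t_5 = s_4 s_5 s_4$. I expect this to reduce to a finite case analysis: since the substitution acts as the identity on every generator $s_i$ with $i\neq 5$, and since every R4/R5 relation involving $s_5$ can be checked directly against the local geometry of $\gamma$ and the shaded quadrilaterals, the relations either become trivial reformulations of themselves or are exchanged in the same manner as the cycle relations of Lemma~\ref{l 13-22}. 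Once this verification is completed, the argument of Lemma~\ref{l 13-22} applies without further change, yielding the claim.
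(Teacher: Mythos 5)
Your base case $g=0$ agrees with the paper's. For $g\ge 1$, however, there is a genuine gap, and it is exactly the one you flag yourself: the verification that the $\widetilde A_3$-relations (R4) and the handle relations (R5) involving arcs near the handle survive the substitution $t_5=s_4s_5s_4$ is never carried out. Writing ``I expect this to reduce to a finite case analysis'' and ``once this verification is completed'' defers precisely the content that distinguishes the $b=2$ case from the $b\ge 3$ case already disposed of in Lemma~\ref{3}, where the extra topology could be attached away from the shaded quadrilaterals; here it cannot. There is also a problem with the setup itself: arcs of an ideal triangulation terminate at marked points, so they cannot ``terminate on an arc $\gamma$ cutting through the handle,'' and an interior curve of a pants decomposition is not an arc of the triangulation and carries no marked point that could play the role of the third boundary circle of $S_{0,3}$ in Fig.~\ref{13-22}. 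To make your local picture precise you would need an explicit triangulation of a neighbourhood of the handle, the resulting local quiver (which necessarily contains the configurations of Fig.~\ref{rel}(d),(e)), and a relation-by-relation check of how R4 and R5 transform under the substitution --- none of which is present.

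The paper avoids all of this by a different route: induction on the genus. To pass from genus $g$ to $g+1$ it cuts $S_{g,2}$ along an arc $\alpha$ joining the two boundary components and inserts a triangulated handle $S_{1,1}$ between the two copies $\alpha_1,\alpha_2$ of the cut, with the triangulation chosen so that $\alpha_1$ and $\alpha_2$ are not adjacent. Then $G(S_{g+1,2})$ is an amalgamated product of $G(S_{1,1})$ and $G(P)$, where $P$ is a surface of genus $g$, over the subgroup $\langle s_{\alpha_1},s_{\alpha_2}\mid (s_{\alpha_1}s_{\alpha_2})^2=e\rangle$. Since $G(S_{1,1})$ has nothing to vary and $G(P)$ is independent of the distribution of marked points by the inductive hypothesis, so is the amalgam. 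This decomposition is what lets the paper sidestep the R4/R5 bookkeeping entirely; if you want to complete your argument, adopting the amalgamation step is a much shorter path than pushing the substitution of Lemma~\ref{l 13-22} past a handle.
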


\begin{proof}
The proof is by induction on $g$. The base, $g=0$, is known: in this case we deal with an annulus, and the group $G$ is the affine Weyl group $\widetilde A_{k+l}$. 
Assume that the lemma is known for all surfaces of genus $g$.

To increase the genus of $S=S_{g,2}$ , we cut $S$ along any arc $\alpha$ connecting two boundary components and insert a handle $S_{1,1}$ 
(with two marked points on the boundary) between the sides $\alpha_1$ and $\alpha_2$ of the cut as in Fig.~\ref{torus}.
As a result, we obtain a surface $S'=S_{g+1,2}$. 
We then choose a triangulation of the handle so that the arcs $\alpha_1$ and $\alpha_2$ are not adjacent
(for this it is sufficient to include the arc $\beta$ separating $\alpha_1$ from $\alpha_2$ at both ends and going through the handle in between).

Cutting $S'$ along $\alpha_1$ and $\alpha_2$, we obtain two connected components: a torus $S_{1,1}$ and some other surface $P$ of genus $g$.
Let $G(P)$ be the corresponding group for this surface.
Then $G(S')$ is an amalgamated product of $G(P)$ and $G(S_{1,1})$ along the common subgroup 
$\langle s_{\alpha_1},s_{\alpha_2}\ | \ (s_{\alpha_1}s_{\alpha_2})^2=e\rangle$.
As neither  $G(P)$ nor $G(S_{1,1})$ depends on the distribution of the boundary marked points, we get the lemma.

\end{proof}

\begin{remark}
While considering block decompositions of $P$ and $S_{1,1}$, we glue additional two triangles along $\alpha_1$ and $\alpha_2$ to each of them for the arcs $\alpha_1$ and $\alpha_2$ to be interior ones (and thus to correspond to generators of the groups). When gluing surfaces together we remove these four triangles.
  \end{remark}

\begin{figure}[!h]
\begin{center}
\epsfig{file=./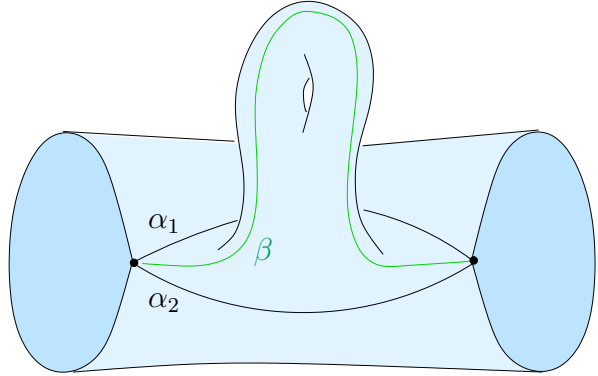,width=0.49\linewidth}
\put(-170,55){$\alpha_1$}
\put(-170,25){$\alpha_2$}
\put(-130,43){\color{ForestGreen} $\beta$}
\caption{Inserting an extra handle. }
\label{torus}
\end{center}
\end{figure}

Lemma~\ref{2} together with  Lemma~\ref{3} prove  Theorem~\ref{boundaries}.

\section{Extended affine Weyl group for a surface}
\label{eawg}
In this section we provide a construction of another group $W(S)$ from a bordered marked unpunctured surface $S$, and then explore its relations with $G(S)$ defined above.

\subsection{Extended affine Weyl group}
We remind the definition of extended affine Weyl groups following~\cite{MS,AS} (these groups are called ``toroidal Weyl groups'' in~\cite{MS}).

Let $V$ be a quadratic space with a quadratic form $\langle\cdot,\cdot\rangle$ of signature $(n_+,n_0)$, where $n_0$ is a dimension of the radical $V_0$. Choose a maximal positive-definite subspace $V_+$, i.e. $dim V_+=n_+$ and $V= V_+\oplus V_0$.

An {\it extended affine root system} $R$ is a set of roots (vectors) in $V$, such that $R$ is discrete, indecomposable, reduced and closed under reflections with respect to the hyperplanes orthogonal to the real roots in $R$ (for detailed definitions and properties we refer to~\cite{Sa1,AABGP}). 

Choose bases $\{v_1,\dots,v_{n_+}\}$ and $\{\delta_1,\dots,\delta_{n_0}\}$ in $V_+$ and $V_0$ respectively, and consider the space  $V\oplus V_0^*= V_+\oplus V_0 \oplus V_0^*$ with basis $\{v_1,\dots,v_{n_+},\delta_1,\dots,\delta_{n_0}, \delta_1^*, \dots, \delta_{n_0}^* \}$. We extend the quadratic form  $\langle\cdot,\cdot\rangle$ to $V\oplus V_0^*$ by 
$$
\langle v_i, \delta_j^*\rangle=0\,\forall i=1,\dots,n_+,\qquad\qquad
\langle \delta_i, \delta_j^*\rangle=
\begin{cases}
1,& \text{if $i=j$ };\\
0,& \text{otherwise}
\end{cases}  
$$
Then one can consider the action of the reflections in the real roots of $R$ in $V\oplus V_0^*$.
The {\em extended affine Weyl group} $W=W(R)$ is the group acting in $V\oplus V_0^*$ generated by reflections in real roots of $R$.

\subsection{Construction of a special triangulation.}
\label{triang-sp}
We now take an unpunctured surface $S$ with a particular choice of triangulation as described below. 
For this triangulation we consider the corresponding quiver $Q$ and construct a positive semi-definite fully compatible quasi-Cartan companion. The  reflections in the companion basis will generate an extended affine Weyl group of type $A_{n_+}^{[n_0]}$ for certain $n_+,n_0$.

An unpunctured surface $S$ contains the following features: boundary components (each with a number of boundary marked points) and handles. To construct the triangulation we do the following:

\begin{itemize}
\item[--] Choose any boundary component $b_0$ and a marked point $p$ on it.
\item[--] Consider three arcs (loops) $x,y,z$ as in Fig.~\ref{tr}, top:
\begin{itemize}
\item[-] all three of them have both ends at $p$, 
\item[-] $x$ separates the boundary component $b_0$; 
\item[-] $y$ separates all other boundary components; 
\item[-] $z$ separates all handles. 
\end{itemize}
\item[--] triangulate the region separated by $x$ as  in Fig.~\ref{tr}, bottom left. We can schematically show the quiver   corresponding quiver as in  Fig.~\ref{tr}, bottom right.
\item[--] Triangulate the regions separated by arcs $y$ and $z$ as in Fig.~\ref{tr3}. Use the triangulations of handles and of annuli  shown in the middle of  Fig.~\ref{tr3} (notice, that the triangulation of the annulus includes a  region triangulated as the domain separated by $x$, cf. Fig.~\ref{tr}, bottom left.)

\end{itemize}

\begin{figure}[!h]
\begin{center}
\epsfig{file=./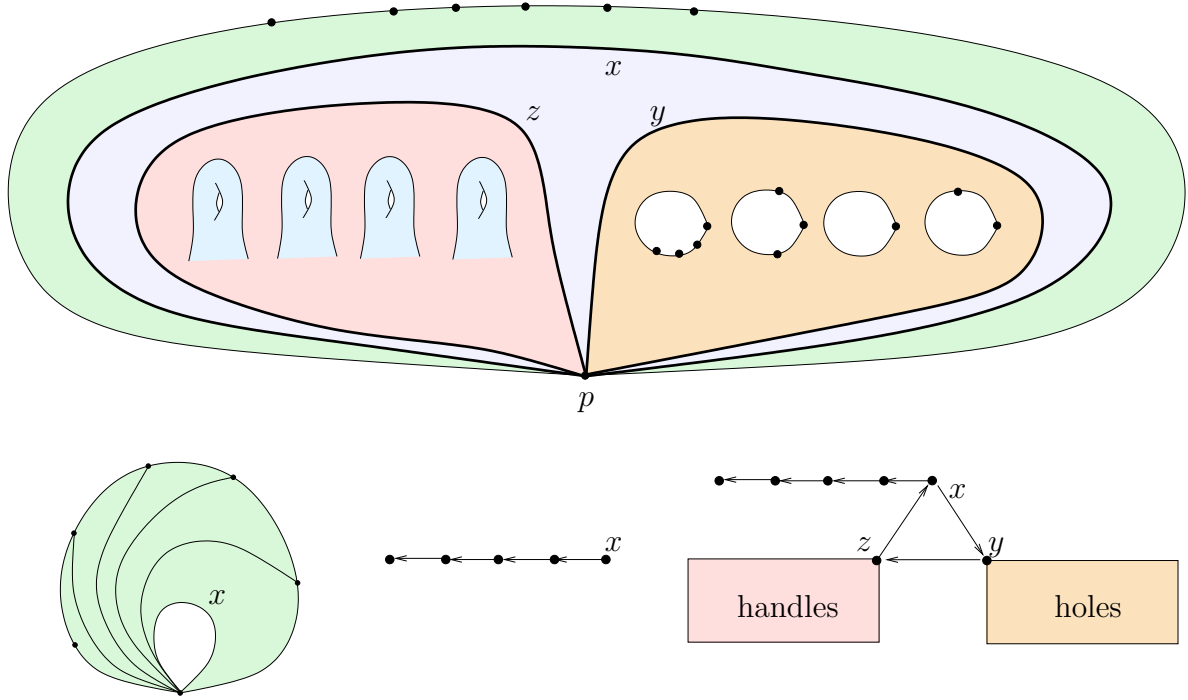,width=0.98\linewidth}
\put(-220,235){$x$}
\put(-250,218){$z$}
\put(-203,218){$y$}
\put(-90,75){$x$}
\put(-75,55){$y$}
\put(-230,110){$p$}
\put(-125,55){$z$}
\put(-370,35){$x$}
\put(-220,55){$x$}
\put(-170,30){handles}
\put(-50,30){holes}
\caption{Constructing the triangulation }
\label{tr}
\end{center}
\end{figure}

\begin{figure}[!h]
\begin{center}
\epsfig{file=./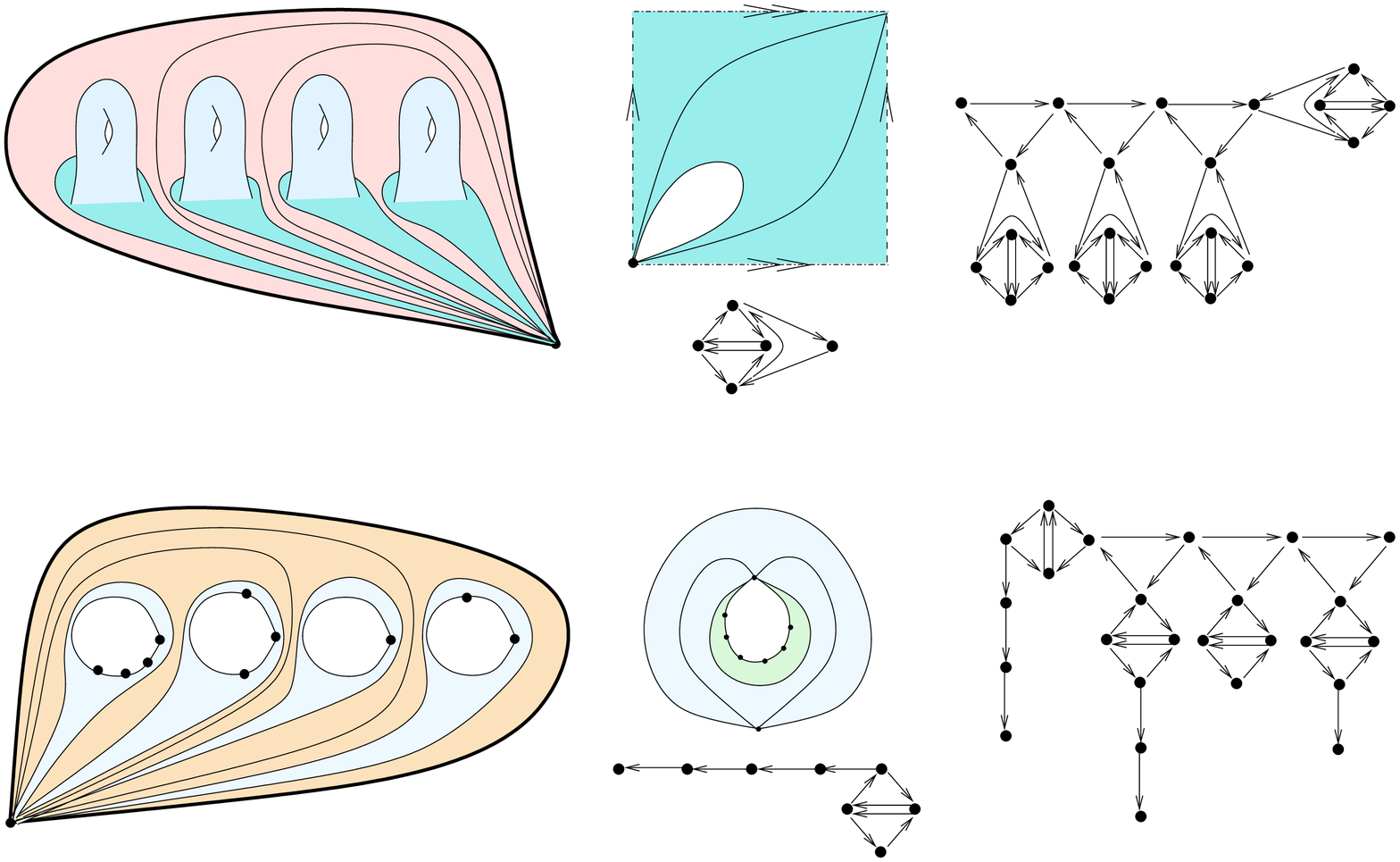,width=0.98\linewidth}
\put(-415,265){$z$} 
\put(-438,219){\scriptsize $z_1$} 
\put(-397,219){\scriptsize $z_2$} 
\put(-363,219){\scriptsize $z_3$} 
\put(-330,219){\scriptsize $z_4$} 
\put(-177,165){$z_i$} 
\put(-210,220){$z_i$}
\put(-145,245){$z$}
\put(-135,219){\scriptsize $z_1$} 
\put(-105,219){\scriptsize $z_2$} 
\put(-70,219){\scriptsize $z_3$} 
\put(-50,245){\scriptsize $z_4$} 
\put(-315,110){$y$} 
\put(-405,90){\scriptsize $y_4$} 
\put(-385,90){\scriptsize $y_3$} 
\put(-350,90){\scriptsize $y_2$} 
\put(-313,90){\scriptsize $y_1$} 
\put(-315,110){$y$} 
\put(-1,110){$y$} 
\put(-235,110){$y_i$} 
\put(-160,-3){$y_i$} 
\put(-100,107){\scriptsize $y_4$} 
\put(-77,80){\scriptsize $y_3$} 
\put(-47,80){\scriptsize $y_2$} 
\put(-13,80){\scriptsize $y_1$}  
\caption{Constructing the triangulation, cont.: regions with handles (top) and holes (bottom). }
\label{tr3}
\end{center}
\end{figure}

The quiver corresponding to the constructed triangulation  is shown in Fig.~\ref{tr-q}.

\begin{figure}[!h]
\begin{center}
\epsfig{file=./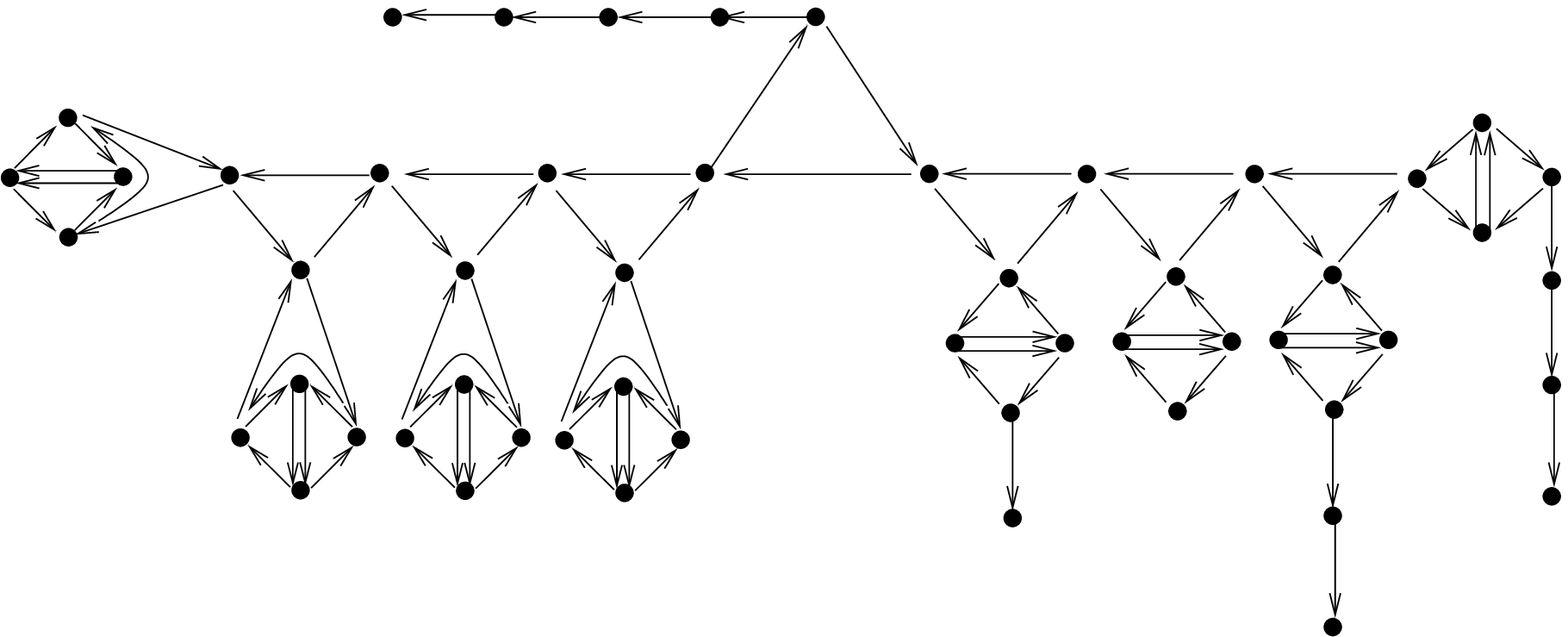,width=0.95\linewidth}
\put(-205,175){$x$}
\put(-175,135){$y$}
\put(-240,135){$z$}
\caption{The quiver }
\label{tr-q}
\end{center}
\end{figure}

\medskip
\noindent
{\bf Construction of the root system.}
Next, we construct a companion basis by assigning vectors to the nodes of $Q$. For this, we first consider a subquiver $Q'$ in $Q$ obtained by removing from $Q$ all nodes labelled by squares  in  Fig.~\ref{tr-q-vect}.
Notice that $Q'$ is a quiver of type $A_m$ for some $m$ (see e.g.~\cite{H}).
Label the nodes of $Q'$ by roots in the root system of type $A_m$ (more precisely, of type $A_{n-2g-b+1}$, where $n$ is the number of nodes in $Q$, $g$ is the genus of the surface and $b$ is the number of boundary components) providing a fully compatible quasi-Cartan companion for $Q'$. We denote the vector space spanned by the companion basis by $V_+$.

We now extend the quadratic form to $n$-dimensional vector space by adding $(2g+b-1)$-dimensional radical $V_0$. Having done that, we assign to the remaining nodes of $Q$ the vectors as in  Fig.~\ref{tr-q-vect}: these additional vectors are constructed from the vectors associated to the adjacent nodes and basis vectors of the radical. More precisely, for each boundary component we have one radical vector ($\delta_k$ in  Fig.~\ref{tr-q-vect}) and for each handle we have two radical vectors ($\delta_{ij}^{1,2}$ in  Fig.~\ref{tr-q-vect}). Let $u_1,\dots,u_n$ be the vectors obtained, denote  ${\bf u}=\{u_1,\dots,u_n \}$. These vectors define a positive semi-definite fully compatible quasi-Cartan companion of $Q$.

\medskip
Let $r_i=r_{u_i}$ be the reflections with respect to vectors $u_i$, denote by $W=W({\bf u},Q)$ the group generated by reflections $r_i=r_{u_i}$ acting on $V_+\oplus V_0$. We can also consider the action of $W$ on the $V_+\oplus V_0\oplus V_0^*$. By construction, $W$ is an extended affine Weyl group of type $A_{n_+}^{[n_0]}$, where the signature of the corresponding quadratic form is given by $(n_+,n_0)=(n-2g-b+1,2g+b-1)$.

\begin{figure}[!h]
\begin{center}
\epsfig{file=./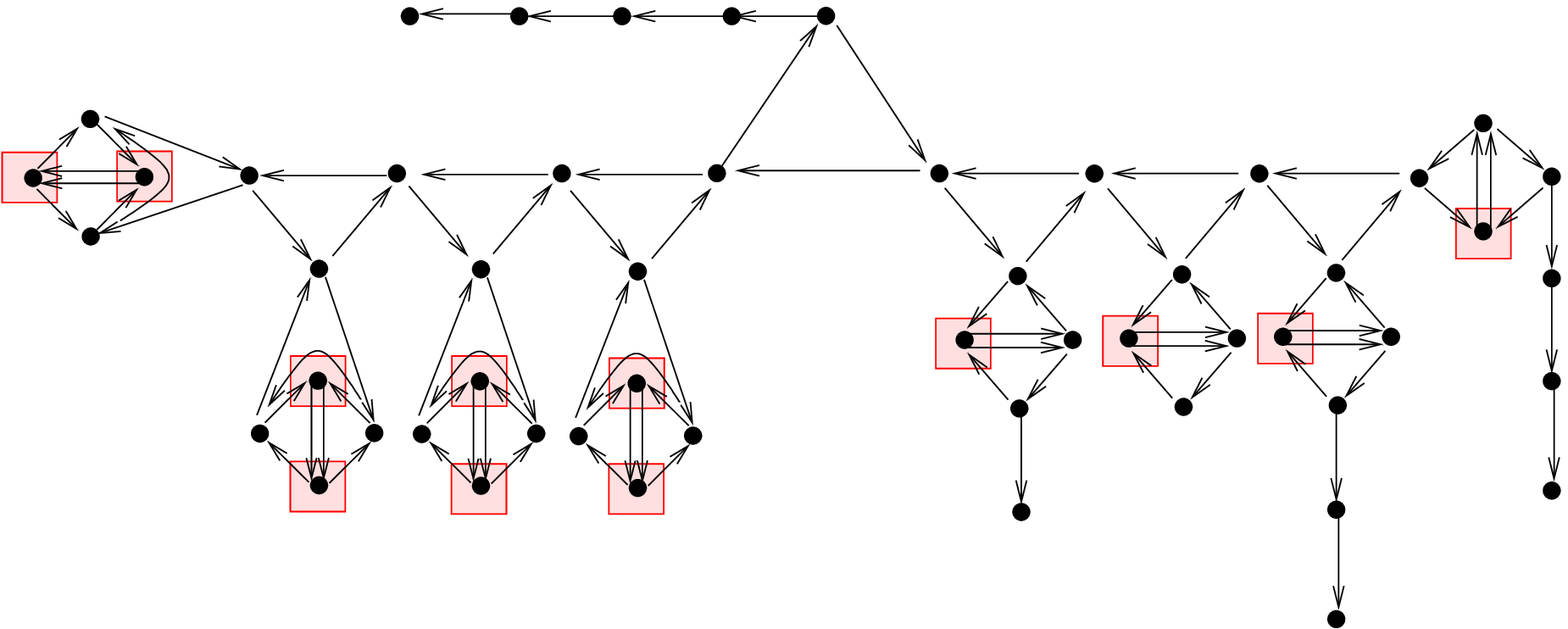,width=0.95\linewidth}
\put(-41,93){\color{blue} $v_k\!\!+\!\!\delta_k$}
\put(-27,147){\color{blue} $v_k$}
\put(-410,149){\color{blue} $v_i$}
\put(-410,98){\color{blue} $v_j$}
\put(-463,110){\color{blue} $v_i\!\!+\!\!v_j\!\!+\!\!\delta_{ij}^1$}
\put(-388,146){\color{blue} $v_i\!\!+\!\!v_j\!\!+\!\!\delta_{ij}^2$}
\put(-392,133){\color{blue} \large / }
\caption{Constructing the quasi-Cartan companion. }
\label{tr-q-vect}
\end{center}
\end{figure}


\subsection{Homomorphism of groups}

Given an unpunctured surface $S$, we triangulate it as in Section~\ref{triang-sp} and obtain a quiver $Q$. Then
we can construct two groups  from the same quiver $Q$:
the group $G=G(Q)$, generated by involutions $s_i$ with relations as in Section~\ref{group}, and the extended affine Weyl group  
$W=W({\bf u},Q)$ generated by reflections $r_i=r_{u_i}$.

\begin{theorem}
\label{homo}
The mapping  $s_i\mapsto r_i$  of the generators extends to a surjective homomorphism
 $\varphi: G\to W$.

\end{theorem}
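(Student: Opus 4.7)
The plan is the following. Since $G$ is presented by the generators $s_1,\dots,s_n$ and the five families of relations (R1)--(R5), the rule $s_i\mapsto r_i$ extends to a homomorphism $\varphi\colon G\to W$ if and only if the images $r_i$ satisfy these relations inside $W$. Surjectivity is then automatic, because the reflections $r_1,\dots,r_n$ are by definition a generating set of $W = W({\bf u}, Q)$. So the whole content of the theorem is the case-by-case verification of the relations, and my plan is to reduce each family to an inner-product computation in the companion basis constructed in Section~\ref{triang-sp}.

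The main tool is the standard conjugation identity $r_u r_v r_u = r_{r_u(v)}$, which lets me rewrite any palindromic middle $s_{i_1}\cdots s_{i_k}\cdots s_{i_1}$ as the single reflection $r_w$ in the iterated-reflection vector $w = r_{u_{i_1}}\cdots r_{u_{i_{k-1}}}(u_{i_k})$. Each of (R3)--(R5) thereby reduces to checking a dihedral relation $(r_{u_1}\cdot r_w)^m = e$, and since $\langle w,w\rangle = \langle u_{i_k}, u_{i_k}\rangle = 2$ the order of $r_{u_1}r_w$ is entirely controlled by $\langle u_1, w\rangle$---assuming, crucially, that when $|\langle u_1, w\rangle| = 2$ the vector $w$ coincides with $\pm u_1$ exactly and not merely modulo the radical. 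Relations (R1) and (R2) are immediate: $r_i^2 = e$; when vertices $i,j$ are unjoined, full compatibility gives $\langle u_i,u_j\rangle = 0$ and thus order $2$; when joined by a single arrow, $|\langle u_i,u_j\rangle| = 1$ gives order $3$.

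For (R3) the calculation is $\langle u_1, r_{u_2}(u_3)\rangle = a_{13}-a_{12}a_{23}$; by the sign rule of full compatibility this is $0$ precisely in the oriented-triangle case, yielding the cubic relation, and $\pm 2$ in the non-oriented case, in which one further checks that the construction of ${\bf u}$ in Section~\ref{triang-sp} makes $r_{u_2}(u_3)=\pm u_1$ on the nose. For (R4) and (R5), the same strategy produces a vector $w$ built by one or two further reflections, and a direct computation in the rank-$4$ (respectively rank-$4$ and rank-$5$) sublattice spanned by the relevant $u_i$ shows that $\langle u_1,w\rangle = 0$ for the squared relations and that $u_1,w$ generate an $A_2$-configuration for the cubic handle relation.

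The main obstacle is the radical bookkeeping that is absent from the positive-definite setting treated in~\cite{BM,FeTu}. In the extended-affine space $V\oplus V_0^*$, reflections in $u$ and in $u+\delta$ (with $\delta\in V_0$) act differently on $V_0^*$, so a would-be finite-order dihedral relation whose witness vectors agree only modulo the radical becomes an infinite-order relation. The specific triangulation chosen in Section~\ref{triang-sp} is designed exactly so that the radical generators $\delta_k$ and $\delta_{ij}^{1,2}$ (inserted at the nodes labelled in Fig.~\ref{tr-q-vect}) never appear as stray summands in the iterated reflections occurring in relations of types (R3)--(R5). I expect the verification of (R5), whose palindromic words have lengths $6$ and $8$, to be the longest step, to be carried out by enumerating the subquiver configurations of Fig.~\ref{rel}(d)--(e) inside Fig.~\ref{tr-q-vect} and computing the corresponding iterated reflections explicitly.
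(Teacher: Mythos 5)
Your overall strategy is the same as the paper's: surjectivity is automatic since the $r_i$ generate $W$ by definition, and the entire content is the verification that the $r_i$ satisfy (R1)--(R5). The paper does this by writing the five reflections as explicit $8\times 8$ matrices on the span of $\{e_1,\dots,e_4,\delta_1,\delta_2,\delta_1^*,\delta_2^*\}$ (given in its appendix) for (R3) and (R5), and by citing \cite[Theorem 1]{Sa3} for (R4). Your reorganization via $r_ur_vr_u=r_{r_u(v)}$ and Gram-matrix computations is a legitimate equivalent route, your check of (R5) agrees with the paper's matrices, and your caveat about radical components when $|\langle u_1,w\rangle|=2$ is exactly the right concern in the semi-definite setting.

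However, two of your asserted computations are wrong, and the second lands precisely in the degenerate case you flagged. First, the (R3) case analysis is crossed: $\langle u_1,r_{u_2}(u_3)\rangle=a_{13}-a_{12}a_{23}$ vanishes for the \emph{simply-laced} oriented triangle (full compatibility forces $a_{12}a_{23}=a_{13}$ when all entries are $\pm1$), which yields the \emph{squared} relation of Fig.~\ref{rel}(a); the \emph{cubic} relation of Fig.~\ref{rel}(b) comes from the oriented triangle containing a double arrow, where the inner product is $\pm1$; and non-oriented triangles carry no relation in (R3) at all. Second, and more seriously, the claim that $\langle u_1,w\rangle=0$ for the squared relation (R4) is impossible: for a chordless non-oriented $4$-cycle the compatibility/admissibility sign condition forces $a_{12}a_{23}a_{34}a_{41}>0$, hence $\langle u_1,r_{u_2}r_{u_3}(u_4)\rangle=a_{14}+a_{12}a_{23}a_{34}=2a_{14}=\pm2$. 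So (R4) is exactly the relation whose validity is \emph{not} determined by the Gram matrix: one must prove the exact identity $r_{u_2}r_{u_3}(u_4)=\pm u_1$ in $V\oplus V_0^*$, radical components included, and this identity genuinely depends on the chosen companion basis (for the standard simple system of an affine root system of type $\t A_3$ one gets $w=\delta-\alpha_1$ and the product $r_{u_1}r_w$ is an infinite-order transvection). This is why the paper delegates (R4) to \cite[Theorem 1]{Sa3} rather than to an inner-product computation. As written, your plan would ``verify'' (R4) via a computation whose outcome is false; that step must be replaced either by the citation or by an explicit check of the vector identity for the basis ${\bf u}$ of Section~\ref{triang-sp}.
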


\begin{proof}
As the generators of $G$ and $W$ are in bijection, we only need to show that the map $f$ takes each defining relation of $G$ to a relation which holds in the reflection group  $W$.
This is clearly the case for relations of types (R1) and (R2) by the construction of $W$. Relations (R4) follow from~\cite[Theorem 1]{Sa3}. 

For the remaining relations (R3) and (R5), the verification is straightforward: we write the matrices of the reflections $r_i$ explicitly and check the corresponding relations. We present the corresponding matrices in the appendix.

\end{proof}

\begin{conjecture}
\label{iso}
The map $\varphi$ in Theorem~\ref{homo} is an isomorphism, i.e. $G(Q)\cong W({\bf u},Q)$. 

\end{conjecture}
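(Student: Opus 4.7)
The plan is to prove injectivity of $\varphi$ (surjectivity is already part of Theorem~\ref{homo}) by induction on the topological complexity of the unpunctured surface $S$, combined with the amalgamated-product decomposition of $G$ that appears in the proof of Lemma~\ref{2}. The base cases are the surfaces whose associated quiver lies in the finite, affine, or elliptic mutation classes, together with the exceptional types of $E$; in these cases the conjecture is already confirmed (see Remark~\ref{iso-el} for the elliptic and exceptional cases, and standard Coxeter-group presentations for the others, in view of Theorem~\ref{G invar}).

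For the inductive step, cutting $S_{g+1,2}$ along a pair of arcs bounding a handle produces the decomposition
\[
G(S_{g+1,2}) \;\cong\; G(P) *_{H} G(S_{1,1})
\]
from the proof of Lemma~\ref{2}, with $H \cong (\mathbb{Z}/2)^2$ generated by the reflections at the cutting arcs $\alpha_1, \alpha_2$; analogous decompositions govern the addition of a boundary component or of a marked point (cf.\ Lemmas~\ref{insert} and~\ref{3}). To close the induction I would establish the parallel decomposition
\[
W(S_{g+1,2}) \;\cong\; W(P) *_{H'} W(S_{1,1}),
\]
where $H' = \varphi(H)$. The two reflections generating $H'$ commute because the companion vectors $u_{\alpha_1}, u_{\alpha_2}$ assigned in Section~\ref{triang-sp} can be arranged to be orthogonal, by selecting the triangulation so that $\alpha_1$ and $\alpha_2$ are not adjacent in $Q$ (as in the proof of Lemma~\ref{2}). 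Granting such a decomposition, the inductive hypothesis provides isomorphisms on the two factors that agree on $H \cong H'$, and the universal property of the amalgamated product yields the desired isomorphism for $G(S_{g+1,2})$.

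The principal obstacle is justifying the amalgamated-product structure of $W$. Unlike $G$, whose presentation makes such a decomposition transparent, the extended affine Weyl group couples its translation lattice (encoded by the $\delta_i^*$ directions) across any cut of the surface, so one must verify that no relation in $W$ crosses the cut beyond those already forced by $H'$. One route is to exploit the explicit presentations of extended affine Weyl groups of type $A^{[n_0]}$ from~\cite{Sa3,AS} and check by direct inspection that the generating reflections and relations split along the cut. A more geometric route is to construct a contractible $W$-complex by gluing the complexes of $W(P)$ and $W(S_{1,1})$ along the fixed locus of $H'$ and apply Bass--Serre theory to extract the amalgamated-product structure. Both approaches require careful bookkeeping of the radical vectors attached to the cutting arcs, which is the essential geometric content of the induction and is where I expect the real difficulty to lie.
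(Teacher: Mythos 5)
You should first note that the paper contains no proof of this statement: Conjecture~\ref{iso} is stated as an open conjecture, and the authors say explicitly in the introduction that they do not have a proof of it (which is precisely why Theorem~\ref{c-ind} is instead derived from Proposition~\ref{adm} via admissible companions). So there is nothing in the paper to compare your argument against, and your proposal must stand on its own. As you yourself acknowledge, it is a strategy with an unproved central step rather than a proof; the problem is that this step is not a matter of ``careful bookkeeping'' but is actually false.

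Concretely, the decomposition $W(S_{g+1,2})\cong W(P)*_{H'}W(S_{1,1})$ over the finite edge group $H'$ of order four cannot hold. A nontrivial amalgamated free product over a finite subgroup in which both vertex groups are infinite contains a nonabelian free subgroup (and has infinitely many ends), whereas $W({\bf u},Q)$ is an extended affine Weyl group of type $A_{n_+}^{[n_0]}$: by the structure theory in~\cite{Sa1,AS} it is an extension of the finite Weyl group of type $A_{n_+}$ by a finitely generated nilpotent group of translations, hence virtually nilpotent, amenable, and free-subgroup-free (and one-ended for the surfaces in question, where $n_0=2g+b-1\ge 2$). Both $W(P)$ and $W(S_{1,1})$ are infinite reflection groups, so the canonical surjection $W(P)*_{H'}W(S_{1,1})\to W(S_{g+1,2})$ has an enormous kernel, and no choice of triangulation or of radical vectors at the cutting arcs can repair this; the inductive step cannot close. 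Note that this observation also puts real pressure on the conjecture itself: if the decomposition $G(S')\cong G(P)*_H G(S_{1,1})$ asserted in the proof of Lemma~\ref{2} is a genuine amalgam with both vertex groups infinite and finite edge group, then $G$ contains free subgroups while $W$ does not, and $\varphi$ could not be injective --- so any viable attack on Conjecture~\ref{iso} must confront this tension rather than propagate the splitting of $G$ to $W$. Two further, smaller issues with the base of your induction: Remark~\ref{iso-el} concerns the exceptional elliptic quivers $E_6^{(1,1)},E_7^{(1,1)},E_8^{(1,1)}$, not surface quivers, so for surfaces the only verified cases are those of Remark~\ref{aff} (genus $0$ with at most two boundary components); the surfaces $S_{0,3}$ and $S_{1,1}$ from which your induction must start are themselves unverified instances of the conjecture.
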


\begin{remark}
  \label{aff}
  Conjecture~\ref{iso} holds for surfaces of genus $0$ with at most two boundary components: in this case both groups are either finite or affine Weyl groups~\cite{BM,FeTu}.  

\end{remark}

\begin{remark}
  \label{iso-el}
  Conjecture~\ref{iso} also holds for exceptional mutation-finite quivers $E_6^{(1,1)}$, $E_7^{(1,1)}$ ans $E_8^{(1,1)}$ in the following sense. The construction of a group in Section~\ref{group-constr} can also be applied to the mutation classes of the quivers listed above, see~\cite{FeTu}. The presentations of extended affine Weyl groups for elliptic root systems of these types are given in~\cite{Sa3}. Comparing the presentations, we see that the groups are isomorphic. 

\end{remark}

We will now state our main result: the group $W$ is invariant under mutations, i.e. one can apply mutations to the quasi-Cartan companions.

\begin{theorem}
\label{c-ind}
For any mutation sequence $\mu$, the vectors $\mu(\bf u)$ provide a positive semi-definite quasi-Cartan companion for $\mu(Q)$.   

\end{theorem}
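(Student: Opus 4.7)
The plan is as follows. The conclusion has two parts: positive semi-definiteness of the Gram matrix of $\mu({\bf u})$, and the fact that this matrix is actually a quasi-Cartan companion of $\mu(Q)$. The first part is immediate from Remark~\ref{geometric realisation}: a mutation $\mu_k$ replaces some $u_i$ by $u_i-\langle u_i,u_k\rangle u_k$ and negates $u_k$, both of which are isometries of the ambient form on $V_+\oplus V_0$. Hence the quadratic form is unchanged, so the positive semi-definiteness of the Gram matrix of $\bf u$ (built into the construction of Section~\ref{triang-sp}) is inherited by every $\mu({\bf u})$. The substantive content is the quasi-Cartan property $|\langle\mu(u_i),\mu(u_j)\rangle|=|b_{ij}(\mu(Q))|$, which is not automatic: as Figure~\ref{ex_puncture} shows, full compatibility need not survive even a single mutation, so one cannot simply iterate Lemma~\ref{l k-comp}.

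My plan is therefore to strengthen full compatibility to an \emph{admissibility} condition on a quasi-Cartan companion (in the sense of~\cite{Se}) and prove two statements:
\begin{itemize}
\item[(a)] the basis ${\bf u}$ constructed in Section~\ref{triang-sp} provides an admissible quasi-Cartan companion of the special triangulation quiver $Q$;
\item[(b)] admissibility is preserved under any single mutation $\mu_k$ of the current quiver and companion.
\end{itemize}
Since admissibility will be set up to be strictly stronger than full compatibility, (a) and (b) together yield the theorem by induction on the length of $\mu$: at each step the current companion is admissible, hence $k$-compatible in every direction, and Lemma~\ref{l k-comp} then guarantees that the next mutation produces a quasi-Cartan companion of the mutated quiver. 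Once this is established, Theorem~\ref{c-ind} is the special case recorded in Proposition~\ref{adm}.

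For (a), because the triangulation of Section~\ref{triang-sp} is explicit, the verification is block-local: one checks that each chordless cycle in $Q$—an oriented $3$-cycle inside a block, a non-oriented cycle coming from a handle or an additional boundary component, or the longer cycles through the central vertex in Figure~\ref{tr-q}—is signed correctly by the vectors of Figure~\ref{tr-q-vect}. The main obstacle, which is where the real work lies, is (b). To show stability of admissibility under $\mu_k$ I expect to classify the chordless cycles that can appear in $\mu_k(Q)$ and to verify the sign condition on each of them using the reflection formula of Remark~\ref{geometric realisation}. The case analysis is made tractable by the structural results of Section~\ref{subd}: the short list of oriented chordless cycles in mutation-finite quivers, together with the parity statement of Proposition~\ref{non-or} on how a vertex can attach to a non-oriented chordless cycle, restricts the local configurations one has to inspect. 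As an aside, if Conjecture~\ref{iso} were known, the theorem would also follow from the mutation invariance of $W$ via Theorem~\ref{G invar} and Theorem~\ref{homo}; but since that conjecture is open we pursue the admissibility argument above.
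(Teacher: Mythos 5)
Your plan coincides with the paper's own argument: the paper notes that the companion built in Section~\ref{triang-sp} is admissible (it is fully compatible and has no chordless cycles of length greater than $3$), reduces Theorem~\ref{c-ind} to showing that admissibility is preserved by a single mutation (Proposition~\ref{adm}), and proves that by classifying the chordless cycles of the mutated quiver using Proposition~\ref{non-or} and Seven's structural results. The only part you leave unexecuted --- the case analysis around the mutated vertex --- is exactly the content of Lemma~\ref{ell-inv} and the proof of Proposition~\ref{adm}, so your outline is sound and essentially identical to the paper's route.
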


Our proof of Theorem~\ref{c-ind} is based on the notion of an {\em admissible} quasi-Cartan companion introduced by Seven~\cite{Se}, we prove the theorem in Section~\ref{adm-sec}. At the same time, Theorem~\ref{c-ind} can be also considered as a corollary of Conjecture~\ref{iso} (if it holds).

\begin{prop}
\label{ind}
Conjecture~\ref{iso} implies Theorem~\ref{c-ind}.

\end{prop}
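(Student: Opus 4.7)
The plan is to transfer the assumed isomorphism $\varphi\colon G(Q)\to W$ along the mutation-invariance of $G$ to obtain, for any mutation sequence $\mu$, an isomorphism $\psi\colon G(\mu(Q))\to W$ sending the generator $t_i$ of $G(\mu(Q))$ to the reflection $r_{\mu(u_i)}$; the companion structure of $\mu(\mathbf{u})$ for $\mu(Q)$ is then read off from the defining relations of $G(\mu(Q))$. The existence of $\psi$ is established by induction on the length of $\mu$, with Conjecture~\ref{iso} as the base case. The inductive step is a direct comparison: the generator-mutation formula $t_i=s_k s_i s_k$ when there is an arrow $i\to k$ and $t_i=s_i$ otherwise (from the remark after Theorem~\ref{G invar}) matches the vector-mutation formulas of Remark~\ref{geometric realisation} via the identities $r_k r_i r_k = r_{r_k(u_i)}$ and $r_{-v}=r_v$.

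With $\psi$ in hand, the positive semi-definiteness of the Gram matrix of $\mu(\mathbf{u})$ is automatic, since the mutated vectors live in the same ambient positive semi-definite space as $\mathbf{u}$, and its diagonal entries equal $2$ because reflections are isometries. For the off-diagonal entries, I would translate the Coxeter defining relations (R2) of $G(\mu(Q))$ through $\psi$: if $i$ and $j$ are disconnected in $\mu(Q)$ then $(t_i t_j)^2=e$ yields $(r_{\mu(u_i)}r_{\mu(u_j)})^2=e$ in $W$, and since $\mu(u_i)\neq\pm\mu(u_j)$ by linear independence (Remark~\ref{basis}), this forces $\langle\mu(u_i),\mu(u_j)\rangle=0$; analogously, a single arrow gives $(r_{\mu(u_i)}r_{\mu(u_j)})^3=e$, and excluding order~$1$ yields $|\langle\mu(u_i),\mu(u_j)\rangle|=1$. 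These are exactly the required absolute values $|b'_{ij}|$ in $\mu(Q)$.

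The double-arrow case is the main obstacle, since $G(\mu(Q))$ imposes no Coxeter defining relation on such a pair. The cleanest route combines the root-system constraint $|\langle\mu(u_i),\mu(u_j)\rangle|\in\{0,1,2\}$ (the mutated vectors are real roots in an extended affine root system of type~$A$) with the following auxiliary construction: by Theorem~\ref{thm fin mut type}, $\mu(Q)$ carries an independent positive semi-definite companion basis $\mathbf{u}'$ of the explicit $\pm e_i\pm e_j$ form, satisfying $|\langle u'_i,u'_j\rangle|=|b'_{ij}|$ throughout, so in particular $|\langle u'_i,u'_j\rangle|=2$ for each double arrow. One would then verify, in analogy with Theorem~\ref{homo}, that $t_i\mapsto r_{u'_i}$ extends to a homomorphism $G(\mu(Q))\to W(\mathbf{u}',\mu(Q))$: the Coxeter relations (R1)--(R2) match by construction, and the cycle, $\widetilde A_3$, and handle relations (R3)--(R5) can be checked directly from the explicit form of $\mathbf{u}'$. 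Since $r_{u'_i}r_{u'_j}$ has infinite order in $W(\mathbf{u}',\mu(Q))$ for double arrows (the translation computation in the degenerate span), so does $t_it_j$ in $G(\mu(Q))$, and hence $r_{\mu(u_i)}r_{\mu(u_j)}$ in $W$ via $\psi$, forcing $|\langle\mu(u_i),\mu(u_j)\rangle|=2$, as required. The main technical difficulty is verifying the (R3)--(R5) relations for this auxiliary companion basis.
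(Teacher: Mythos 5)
Your main line of argument is the paper's own: the proof of Proposition~\ref{ind} in the paper is precisely an induction whose hypothesis is that $\mathbf{u}$ is a companion basis for $Q$ and that $s_i\mapsto r_{u_i}$ is an isomorphism $G(Q)\to W$, and whose step matches the generator-mutation formula $t_i=s_ks_is_k$ (Theorem~\ref{G invar} and the remark following it) against the basis-mutation formula of Remark~\ref{geometric realisation}, concluding that the order of $t_it_j$ in $G(\mu_k(Q))$ equals the order of $r_{u_i'}r_{u_j'}$ in $W$ and hence that the mutated Gram matrix is a quasi-Cartan companion of $\mu_k(Q)$. Your explicit translation of orders into inner products (order $2$ forces $\langle u_i',u_j'\rangle=0$, order $3$ forces $|\langle u_i',u_j'\rangle|=1$, using linear independence from Remark~\ref{basis} to exclude order $1$) is exactly the content of the paper's final sentence, just spelled out.

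The one place you go beyond the paper is the double-arrow case, and you are right that it is the delicate point: the presentation of $G(\mu(Q))$ imposes no Coxeter relation on such a pair, so the order argument does not by itself exclude $|\langle\mu(u_i),\mu(u_j)\rangle|\in\{0,1\}$; the paper's proof passes over this silently. However, your proposed repair contains a step that would fail as stated: the companion basis supplied by Theorem~\ref{thm fin mut type} is not fully compatible in general, and relation (R3)(a) holds for the reflections $r_{u_i'}$ on an oriented $3$-cycle precisely when the companion is compatible there (the relation $(r_1\,r_2r_3r_2)^2=e$ amounts to $a_{13}=a_{12}a_{23}$), so the homomorphism $G(\mu(Q))\to W(\mathbf{u}',\mu(Q))$ you rely on need not exist for that basis. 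Even replacing it by the fully compatible basis of Theorem~\ref{thm fully}, the relations (R4)--(R5) and the non-oriented-cycle constraints are not consequences of full compatibility; verifying them for an arbitrary quiver in the mutation class is essentially the admissibility analysis of Section~\ref{adm-sec} (Proposition~\ref{adm}), i.e.\ the paper's unconditional proof of Theorem~\ref{c-ind} --- at which point the detour through Conjecture~\ref{iso} buys nothing. So either leave the double-arrow case at the level of the paper's assertion (that the order of $t_it_j$ for a double arrow is not $1$, $2$ or $3$), flagging it as the point where a further argument about $G$ is needed, or abandon the auxiliary-basis construction; as designed it is both circular in spirit and not actually available from Theorem~\ref{thm fin mut type}.
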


\begin{proof}
We will proceed inductively to prove the following claim:\\

\noindent
{\bf Claim.}
{\it
Suppose there is a quiver $Q$ and a vector system ${\bf u} =\{u_i \}$ satisfying the following conditions:
\begin{itemize}
\item[-] $\bf u$ is a companion basis for $Q$;
\item[-] the reflections $\{r_i\}$ (where $r_i=r_{u_i}$) generate an extended affine Weyl group isomorphic to $G=G(Q)$ via the mapping $s_i\mapsto r_{u_i}$. 
\item[]
Then for any mutation $\mu_k$ the quiver $\mu_k(Q)$ and the vector system $\mu_k(\bf u)$ satisfy the same conditions. 
\end{itemize}
}

It is sufficient to show this inductive statement, as we can start with the quiver $Q$ and the corresponding vectors $\{u_i\}$ constructed in Section~\ref{triang-sp} and hence satisfying the assumption.

Let $Q'=\mu_k(Q)$. The group $G(Q')$ coincides with the group $G$ with generators given by 
$$t_i=\begin{cases}
s_ks_is_k & \text{ if $Q$ contains an arrow from $i$ to $k$ },\\
s_i &\text{ otherwise}.
\end{cases}
$$
Applying $\varphi$ (which is an isomorphism by the assumption of the proposition) to these new generators, we obtain the set of reflections in $W$ given by
$$
r_i'=\begin{cases}
r_kr_ir_k & \text{ if $Q$ contains an arrow from $i$ to $k$ },\\
r_i &\text{ otherwise}.
\end{cases}
$$
Notice that each of these new reflections $r_i'$ is a reflection with respect to a new vector $u_i'$ obtained from $u_i$ by a reflection with respect to $u_k$ (see Fig.~\ref{fig-ind}). According to Remark~\ref{geometric realisation}, this is exactly the action of the mutation $\mu_k$ on the vectors $\{u_i\}$.
In particular, the order of the element $t_it_j$ in $G$ coincides with the order of the element $r_i'r_j'$ in $W$,
and hence the vectors $\{ r_i'\}$ form a quasi-Cartan companion for $Q'$.

\end{proof}

\begin{figure}[!h]
\begin{center}
\epsfig{file=./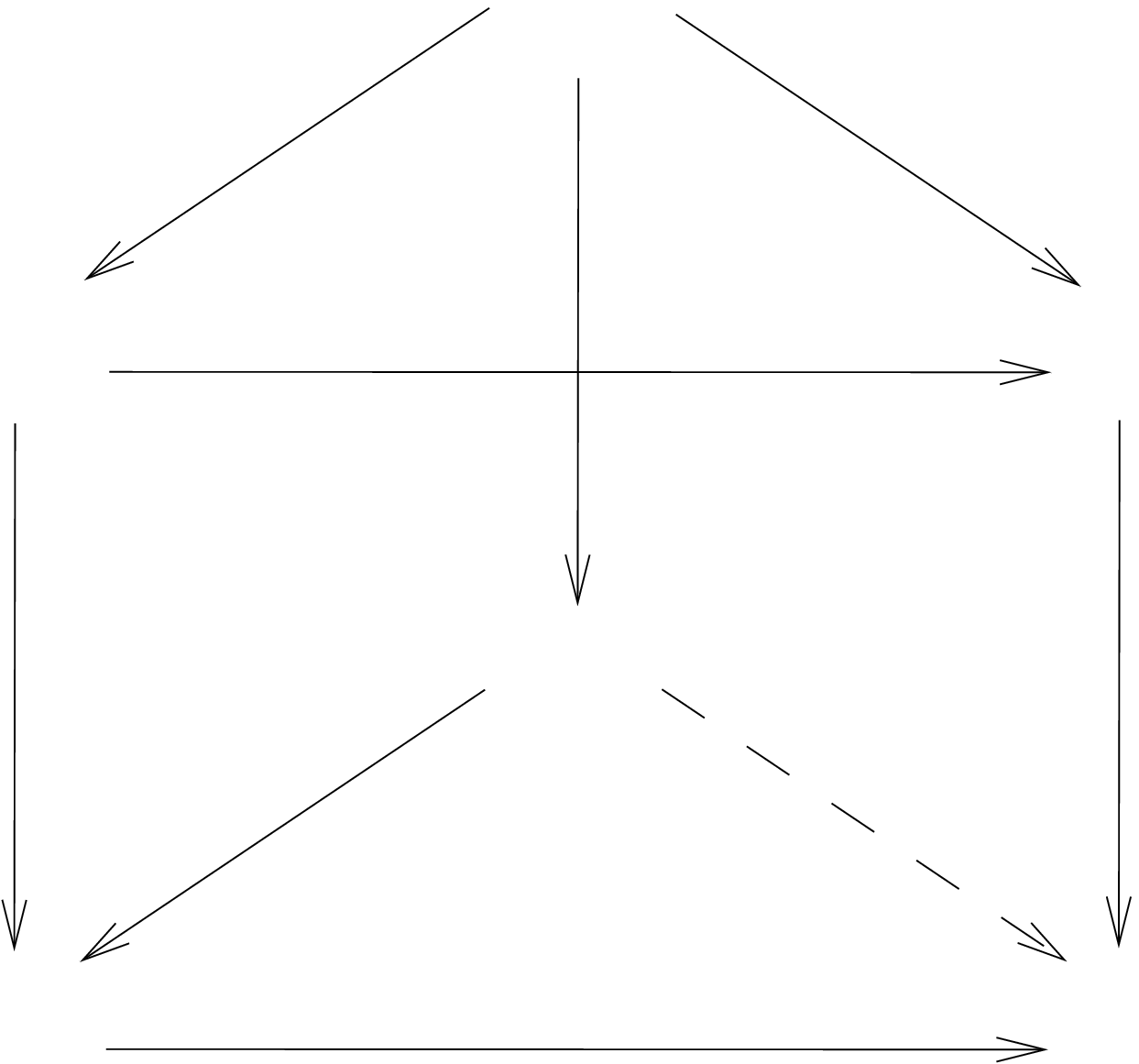,width=0.3\linewidth}
\put(-72,125){$Q$}
\put(-155,82){$G(Q)$}
\put(-240,82){$(s_1,\dots,s_n) =$}
\put(-5,82){$W$}
\put(20,82){$=(r_{u_1},\dots,r_{u_n})$}
\put(-74,43){$Q'$}
\put(-157,0){$G(Q')$}
\put(-240,0){$(t_1,\dots,t_n) =$}
\put(-5,0){$W'$}
\put(20,0){$= (r_{u_1'},\dots,r_{u_n'})$}
\put(-40,88){$\varphi$}
\put(-40,7){$\varphi$}
\put(-80,100){$\mu_k$}
\put(-148,55){$\mu_k$}
\caption{To Proposition~\ref{ind}. }
\label{fig-ind}
\end{center}
\end{figure}

\section{Admissible quasi-Cartan companions}
\label{adm-sec}

\subsection{Proof of Theorem~\ref{c-ind}}

In~\cite{Se}, Seven generalized the notion of a $k$-compatible quasi-Cartan companion of a quiver $Q$.

\begin{definition}[\cite{Se}]
A quasi-Cartan companion $A$ is {\it admissible } if the following holds: for every chordless cycle $i_1,\dots,i_k$ the cyclic product of the elements $-a_{i,i+1}$ is negative if the cycle is oriented, and positive otherwise. 
  \end{definition}

  One can easily see that restricting the admissibility condition to $3$-cycles leads to the definition of a fully compatible companion, and thus an admissible companion is always fully compatible. However, the converse may not be true.

Note that the quasi-Cartan companion constructed in Section~\ref{triang-sp} is fully compatible, positive semi-definite, and does not contain any cycles of length more than $3$, and thus is admissible. Therefore, to prove Theorem~\ref{c-ind}, it is sufficient to prove the following statement.

  \begin{prop}
    \label{adm}
    Let the quiver $Q_0$ and a vector system ${\bf u} =\{u_i \}$ be those constructed in Section~\ref{triang-sp}, and assume that $\mu$ is a sequence of mutations such that $\mu({\bf u})$ provides an admissible quasi-Cartan companion of $\mu(Q_0)$.
 Then for any mutation $\mu_k$ the vector system $\mu_k(\mu(\bf u))$ provides an admissible quasi-Cartan companion of $\mu_k\mu(Q_0)$. 

\end{prop}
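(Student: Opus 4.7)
\textbf{Proof proposal for Proposition~\ref{adm}.}
The plan is to prove the inductive step: assuming $A$, the Gram matrix of $\mu(\mathbf{u})$, is an admissible quasi-Cartan companion of $Q := \mu(Q_0)$, show that $\mu_k(A)$ is an admissible quasi-Cartan companion of $\mu_k(Q)$ for every vertex $k$. Iterating this step from the base case $(Q_0,\mathbf{u})$---which is admissible because Section~\ref{triang-sp} makes it fully compatible and positive semi-definite, while the quiver $Q_0$ in Fig.~\ref{tr-q} contains no chordless cycles of length exceeding three---yields the proposition.

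Since admissibility implies full compatibility, $A$ is in particular $k$-compatible, and Lemma~\ref{l k-comp} already guarantees that $\mu_k(A)$ is a quasi-Cartan companion of $\mu_k(Q)$. What remains is to check the cyclic product sign condition on every chordless cycle of $\mu_k(Q)$, and I would split this verification by cycle length. For chordless $3$-cycles in $\mu_k(Q)$, I would trace each such triangle back to a configuration in $Q$: triangles disjoint from $k$ are inherited with their signs, while triangles through $k$ correspond either to paths of length two through $k$ in $Q$ or to oriented triangles in $Q$ that get reoriented, and their signs can be computed directly from the mutation formula together with the geometric description in Remark~\ref{geometric realisation}. Combined with admissibility of $A$ on the relevant $3$-cycles (oriented or not) of $Q$, this yields full compatibility of $\mu_k(A)$ at \emph{every} vertex, not only at $k$.

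For chordless cycles of length $\geq 4$ in $\mu_k(Q)$, the essential inputs are the observation from Section~\ref{subd} that all such cycles are simply-laced and non-oriented, together with Proposition~\ref{non-or}: any vertex outside a non-oriented chordless cycle is joined to it by an even number of arrows. These two facts severely restrict how $\mu_k$ can interact with long cycles, and I would distinguish three situations: (i) the cycle is disjoint from $k$ and unaffected by $\mu_k$; (ii) $k$ is a vertex of the cycle in $Q$, so $\mu_k$ either shortens it by two through the triangle rule or leaves its length unchanged; (iii) $k$ is outside the cycle but, via the triangle rule, creates or removes some of its arrows. In situations (i)--(ii), the sign of the cyclic product on the cycle in $\mu_k(Q)$ is read off from the corresponding cycle (or shorter cycle plus triangle) in $Q$, where admissibility of $A$ supplies the sign; the parity condition of Proposition~\ref{non-or} is what guarantees that the accumulated sign changes cancel to produce the right parity.

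The main obstacle I anticipate is situation (iii), and more generally chordless cycles in $\mu_k(Q)$ that are genuinely new, not present as cycles in $Q$. Here the sign of the cyclic product has to be deduced from admissibility on a union of shorter cycles and $3$-cycles through $k$ in $Q$ that need not themselves be chordless in $Q$, and the bookkeeping is the delicate part. I would finish by an explicit sign computation exploiting Remark~\ref{geometric realisation}: the mutated vectors differ from the original ones by partial reflections in $v_k$, so the cyclic product of the $(-a_{ij})$ around the new cycle can be rewritten as a product of inner products of reflected vectors, and the parity constraint from Proposition~\ref{non-or} ensures that the contribution from $v_k$ appears an even number of times, reducing the verification to the known sign condition on cycles already present in $Q$. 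Because the block decomposition of Section~\ref{triang-sec} limits the local configurations through $k$ to a short finite list, the resulting case analysis, although not brief, is tractable.
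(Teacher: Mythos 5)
Your overall architecture (induction on the mutation sequence, reduction to checking the sign of the cyclic product on each chordless cycle of $\mu_k(Q)$, splitting by whether $k$ lies on the cycle, and invoking Proposition~\ref{non-or} to constrain how $k$ can attach to a long non-oriented cycle) matches the spirit of what the paper does, but your situation (iii) --- the genuinely new cycles, and more generally the case where $k$ is joined to a long non-oriented chordless cycle by four arrows --- is exactly where your argument has a gap rather than a proof. The claim that ``the parity constraint from Proposition~\ref{non-or} ensures that the contribution from $v_k$ appears an even number of times, reducing the verification to the known sign condition on cycles already present in $Q$'' is not established and does not suffice. Concretely: when $k$ attaches to $Q'_c$ by four arrows, one must (a) rule out the configuration in which $k$ lies on exactly one oriented triangle (the paper does this not by parity but by citing Seven's result that a mutation-finite quiver with at least two non-oriented chordless cycles must contain an oriented cycle, applied to a suitable subquiver), (b) exclude the two-oriented-triangles-sharing-an-arrow configuration (which corresponds to a self-folded triangle and so cannot occur on an unpunctured surface), and (c) perform explicit sign computations for the surviving configurations (the double-arrow ``wheel'' and the two-oriented-cycles picture). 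None of these steps follows from parity alone, and your proposal does not supply them.

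You also miss the structural device that makes the paper's case analysis finite and clean: for a chordless cycle $Q'_c$ in $\mu_k(Q)$, the paper passes to $\t Q'_c=Q'_c\cup\{k\}$ and observes that, topologically, $\t Q'_c$ is obtained from a triangulated polygon or annulus by gluing two boundary segments or attaching one triangle, hence is of finite type, of affine type, or arises from a $3$-holed sphere or a $1$-holed torus. Finite and affine types are then dispatched by citing Seven's theorem that mutation preserves admissibility there, and the full hands-on analysis (the one you are sketching for arbitrary surfaces) only has to be carried out for the two small exceptional surfaces, where it terminates in a short list of explicitly checkable configurations. Without some such reduction, your ``tractable case analysis'' over all unpunctured surfaces is an open-ended bookkeeping problem, and the one place you flag as delicate is precisely the place where the argument currently does not close.
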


{  
\begin{definition}
Given a quiver $Q$ and its quasi-Cartan companion $A$, let us call $A$ a {\em symmetric twin} of $Q$ if every mutation sequence $\mu$ takes $A$ to a quasi-Cartan companion of $\mu(Q)$. 
\end{definition}

Then Proposition~\ref{adm} implies the following corollary.

  \begin{cor}
\label{all-un}
Every quiver constructed from a triangulation of an unpunctured surface has a symmetric twin. The twin is unique up to simultaneous sign changes of rows and columns. 

\end{cor}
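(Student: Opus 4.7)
\medskip

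\noindent\textbf{Proof proposal.}
Existence is immediate from Proposition~\ref{adm}: the admissible quasi-Cartan companion $A$ produced there is, by definition, taken to a quasi-Cartan companion of $\mu(Q)$ by every mutation sequence $\mu$, which is precisely the symmetric twin condition. So the content of the corollary is the uniqueness statement.

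For uniqueness, let $A$ and $A'$ be two symmetric twins of a fixed quiver $Q$ coming from a triangulation of an unpunctured surface. Being quasi-Cartan companions of the same $Q$, they have identical absolute values of off-diagonal entries, so there is a function $\varepsilon$ on the arrows of $Q$ with values in $\{\pm 1\}$ satisfying $a'_{ij}=\varepsilon_{ij}\,a_{ij}$. The desired conclusion, $A'=DAD$ for a diagonal matrix $D=\mathrm{diag}(\eta_i)$ with $\eta_i\in\{\pm 1\}$, is precisely the assertion that $\varepsilon$ is an $\mathbb{F}_2$-coboundary on the underlying graph of $Q$. By a standard graph-theoretic argument this is equivalent to
$$
\prod_{(ij)\in C}\varepsilon_{ij}=+1 \qquad\text{for every cycle }C,
$$
and since chordless cycles generate the cycle space modulo $2$, it suffices to verify the identity on chordless cycles. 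Thus the problem reduces to showing that for every chordless cycle $C$ of $Q$ the sign of $\prod_{(ij)\in C}a_{ij}$ is determined by $Q$ and $C$ alone, independently of which symmetric twin is used.

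To achieve this, I would prove the intermediate statement that \emph{every} symmetric twin of $Q$ is in fact admissible in Seven's sense, whence the cyclic sign product on any chordless cycle is forced by its length and by whether it is oriented. For chordless $3$-cycles the property is immediate: the definition of a symmetric twin requires $\mu_k(A)$ to be a quasi-Cartan companion of $\mu_k(Q)$ for every $k$, which is exactly $k$-compatibility of $A$, and full compatibility is admissibility restricted to triangles. For chordless cycles of length $\ell\geq 4$, by the discussion in Section~\ref{subd} (in particular Proposition~\ref{non-or}) such cycles in surface quivers are necessarily non-oriented and of type $\widetilde A$. I would proceed by induction on $\ell$: a judicious mutation $\mu_k$ at an interior vertex $k$ of the cycle creates a chord and exposes a new $3$-cycle, and the mutation formulas for the entries of a quasi-Cartan companion together with admissibility of the new $3$-cycle pin down the sign of the cyclic product on the original $\ell$-cycle. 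This is the same kind of propagation argument that drives the proof of Proposition~\ref{adm} itself.

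The main obstacle is this inductive step for long chordless cycles: one must check that the explicit change-of-sign formulas under $\mu_k$ interact correctly with the surface-theoretic constraints on chordless cycles, so that admissibility of the cycle before mutation is forced by admissibility of the configuration after mutation. This mirrors the technical core of Proposition~\ref{adm} (and of Seven's independent proof referenced in its footnote) and can plausibly be imported with only minor modification. Once admissibility of both $A$ and $A'$ is established, the cyclic sign products on all chordless cycles agree, $\varepsilon$ is a coboundary, and the diagonal $\pm 1$ matrix $D$ realizing $A'=DAD$ exists, completing the uniqueness statement.
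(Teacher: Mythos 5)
Your overall architecture coincides with the paper's: existence is read off from Proposition~\ref{adm}, and uniqueness is obtained by showing that any symmetric twin must be admissible and then that admissibility pins down the companion up to simultaneous sign changes of rows and columns. For that last step the paper simply invokes Seven's uniqueness theorem for admissible companions (\cite[Theorem~2.11]{Se}); your $\mathbb{F}_2$-coboundary argument on the cycle space is a correct, self-contained substitute for it, and your observation that the twin condition forces $k$-compatibility at every vertex (hence admissibility on all chordless $3$-cycles) is sound.

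The genuine issue is the step you yourself flag as the main obstacle: admissibility on chordless cycles of length $\ell\ge 4$. Your proposed induction on $\ell$ inside the ambient quiver, via ``judicious mutations'' interacting with surface-theoretic constraints, is not carried out, and it re-imports the technical content of Proposition~\ref{adm} where none is needed. The paper's device here is much simpler: a mutation sequence supported on the vertices of a full subquiver $C$ commutes with restriction to $C$, so the restriction of a symmetric twin of $Q$ to any chordless cycle $C$ is itself a symmetric twin of $C$. A chordless cycle in a mutation-finite quiver is a quiver of finite type $D_k$ (oriented) or affine type $\widetilde A_{m,n}$ (non-oriented); up to sign changes at the vertices its quasi-Cartan companions fall into exactly two classes, distinguished by the cyclic sign product, and one checks directly (or cites~\cite{Se,S2}) that the non-admissible class already fails the twin property after a short mutation sequence --- exactly as in Fig.~\ref{ex-mut} for the oriented triangle. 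Replacing your unexecuted induction by this restriction argument closes the gap and yields the paper's proof.
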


\begin{proof}
By Proposition~\ref{adm}, the Gram matrix of the vector system ${\bf u}$ is a symmetric twin of $Q_0$, and thus every quiver mutation-equivalent to it also has a symmetric twin. To prove the uniqueness, notice that a symmetric twin must be admissible. Indeed, if a quasi-Cartan companion $A$ of $Q$ is not admissible, then there exist a cycle $C$ in $Q$ on which the admissibility condition fails. The cycle $C$ itself is a quiver of type $D_k$ or $\t A_{m,n}$. It is now easy to check that the restriction of $A$ onto $C$ is not a symmetric twin of $C$, which implies that $A$ is not a twin of $Q$. Finally, it is proved in~\cite{Se} that an admissible quasi-Cartan companion to a quiver, if exists, is unique up to simultaneous change of sign of rows and columns, which completes the proof.     
  
  \end{proof}

}

\begin{remark}
\label{all-ex}
  Corollary~\ref{all-un} also holds for all exceptional finite mutation classes except for $X_6$ and $X_7$. Indeed, for quivers of finite and affine types this follows from Remark~\ref{aff} (or directly from~\cite{S2}), and for elliptic quivers $E_6^{(1,1)}$, $E_7^{(1,1)}$, $E_8^{(1,1)}$ this follows from Remark~\ref{iso-el}.

  \end{remark}

Let us now prove Proposition~\ref{adm}. First, we use the properties of admissible companions to prove the proposition for a very restricted set of surfaces.

\begin{lemma}
\label{ell-inv}
Proposition~\ref{adm} holds for quivers $Q_0$ constructed from a surface $S$ that is either a genus $0$ surface with three boundary components, or a genus $1$ surface with one boundary component. 
  
\end{lemma}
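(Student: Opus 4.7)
The plan is to verify the mutation-invariance of admissibility for these two base surfaces by a direct combinatorial analysis of chordless cycles, exploiting the rigidity of the topology. Since admissibility implies full compatibility, Lemma~\ref{l k-comp} already ensures that $\mu_k(\mu({\bf u}))$ is a quasi-Cartan companion of $\mu_k\mu(Q_0)$; the only thing to verify is the cyclic sign condition on each chordless cycle of the mutated quiver.

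First I would classify the chordless cycles that can appear in any quiver mutation-equivalent to $Q_0$. By the observations in Section~\ref{subd}, for quivers from unpunctured surfaces such cycles are either oriented $3$-cycles (ordinary triangles or the $(1,1,2)$-cycle of type $\t A_{2,1}$ arising from a glued-block triangle) or non-oriented simply-laced cycles of type $\t A_{p,q}$. For the pair of pants, non-oriented cycles arise from loops enclosing pairs of boundary components, while for the one-holed torus they arise from loops going around the handle. In both cases the lattice of chordless cycles is limited: any two non-oriented chordless cycles meet in a controlled way, and the number of combinatorial types of local configurations remains bounded even as the mutation class is traversed.

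Next, for a fixed chordless cycle $C'$ in $\mu_k\mu(Q_0)$, I would split the verification into two cases. If $k\notin C'$, the arrows inside $C'$ are unchanged by the mutation, and the entries $a'_{ij}$ for $(i,j)\in C'$ differ from $a_{ij}$ only by sign flips prescribed by the mutation formula; one observes that these sign flips occur in pairs along $C'$ (one at each vertex of $C'$ that sees $k$ in a non-degenerate configuration), so the cyclic product is preserved. If $k\in C'$, one uses the mutation formula to express the contribution of the two edges of $C'$ at $k$ in terms of a single edge of a ``parent'' configuration in $\mu(Q_0)$; this rewrites the cyclic product on $C'$ in terms of cyclic products on one or two parent cycles in $\mu(Q_0)$ of length $|C'|\pm 1$, or on a cycle together with a triangle at $k$. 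Admissibility of $A=\mu({\bf u})$ applied to these parent configurations then yields the desired sign for $C'$.

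The main obstacle is controlling long non-oriented cycles of type $\t A_{p,q}$ through $k$ when $k$ additionally lies in one or more oriented triangles attached to the cycle: the mutation formula mixes the sign on the cycle with the signs on the triangle edges in a non-trivial way, and one must verify that the combinatorial identities linking the cyclic product on $C'$ to the cyclic products on the parent cycle(s) through $k$ balance precisely, leaving the orientation-dependent sign correct. For the pair of pants and the one-holed torus this bookkeeping is manageable because the topology constrains how many non-oriented chordless cycles pass through a single arc, and because all such configurations can be enumerated up to symmetry; this is the step where most of the work of the lemma concentrates.
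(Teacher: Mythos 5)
Your overall architecture (induct over the mutation sequence, then check the cyclic sign condition on each chordless cycle $C'$ of the mutated quiver, splitting on whether $k\in C'$) is the same as the paper's, but the case $k\notin C'$ --- which is where essentially all of the work lies --- rests on an assertion that is false as stated. You claim that when $k\notin C'$ the arrows inside $C'$ are unchanged by $\mu_k$ and that the entries $a'_{ij}$ differ from $a_{ij}$ only by sign flips that ``occur in pairs along $C'$'', so the cyclic product is preserved. Neither claim holds: whenever $k$ is joined to two vertices of $C'$ by arrows of the appropriate orientations, the mutation formula changes $b_{ij}$ and $a_{ij}$ for $i,j\in C'$ additively (creating or destroying chords and changing weights), not merely by sign, and there is no a priori pairing of sign changes around the cycle. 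This is precisely the case on which the paper spends most of its proof: it uses Seven's Proposition~\ref{non-or} (any vertex meets a non-oriented chordless cycle in an even number of arrows) together with the valence bound $\le 4$ for unpunctured-surface quivers to reduce to finitely many attachment patterns of $k$ to $C'$; some patterns are excluded by mutation-finiteness via \cite[Proposition~2.1]{Se1}, some reduce to finite or affine subquivers where preservation of admissibility is known from \cite{Se,S2}, and the remaining few (the torus-with-one-boundary quiver of Fig.~\ref{dread}, the double-arrow configuration of Fig.~\ref{wheel1}, and the two-oriented-triangle configurations of Fig.~\ref{2cycles}) require explicit computation. Your final paragraph concedes that this bookkeeping is where the work concentrates, but the proposal neither identifies these configurations nor supplies the computations, so the argument does not close.

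The case $k\in C'$ is also more delicate than your sketch suggests, but it is repairable by citation rather than by your ``parent cycle'' rewriting: since $\mu_k(C')$ is a full subquiver of $\mu(Q_0)$ of finite or affine type, and the restriction of an admissible companion to a full subquiver is admissible, one can invoke \cite[Corollary~1.8]{S2} (mutation preserves admissibility for finite and affine quivers) and conclude directly. Attempting to re-derive the sign identity by hand, as you propose, would amount to reproving that result.
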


\begin{proof}
We will proceed inductively: suppose that a quiver $Q=\mu(Q_0)$ for some sequence of mutations $\mu$, and $A=\mu(A^0)$ is its admissible quasi-Cartan companion, where $A^0$ is the admissible companion of $Q_0$ constructed as a Gram matrix of vectors ${\bf u}$. Choose any vertex $k$.
  
    To prove that the mutation $\mu_k(A)$ is admissible, we need to show that the admissibility condition holds for every chordless cycle $Q'_c$ in the quiver $Q'=\mu_k(Q)$. As $Q'$ is a quiver constructed from an unpunctured surface,   $Q'_c$ can be either oriented of length $3$, or non-oriented.

    Observe that $Q'_c$ is of affine or finite type. In particular, if the vertex $k$ belongs to $Q'_c$, then the quiver $\mu_k(Q'_c)$ is also of affine or finite type. Since  $\mu_k(Q'_c)$ is a subquiver of $Q$, the restriction of $A$ to it is also admissible, and thus its mutation restricted to $Q'_c$ is admissible by~\cite[Corollary 1.8]{S2}.
    
    Therefore, we can now assume that the vertex $k$ does not belong to  $Q'_c$, denote $\t Q'_c= Q'_c\cup\{k\}$. 

    Suppose first that $Q'_c$ contains exactly three vertices, and consider a subquiver $\mu_k(\t Q'_c)$ of $Q$. This subquiver contains four vertices and is mutation-finite. It is easy to see that this implies that either it is of finite or affine type, or it is a quiver shown in Fig.~\ref{dread} (it represents a triangulation of a genus $1$ surface with a unique marked point on its boundary). In the former case the restriction of $\mu_k(A)$ to $\t Q'_c$ is admissible by~\cite{Se}, and in the latter case we just need to check that every mutation of the quiver in Fig.~\ref{dread} leads to an admissible companion, which is a short and straightforward calculation (note that an admissible companion is unique up to simultaneous change of signs of rows and columns~\cite[Theorem 2.11]{Se}, so we need to choose one admissible companion, e.g. the one shown in Fig.~\ref{dread}, and perform four mutations).   

    \begin{figure}
      \begin{center}
\epsfig{file=./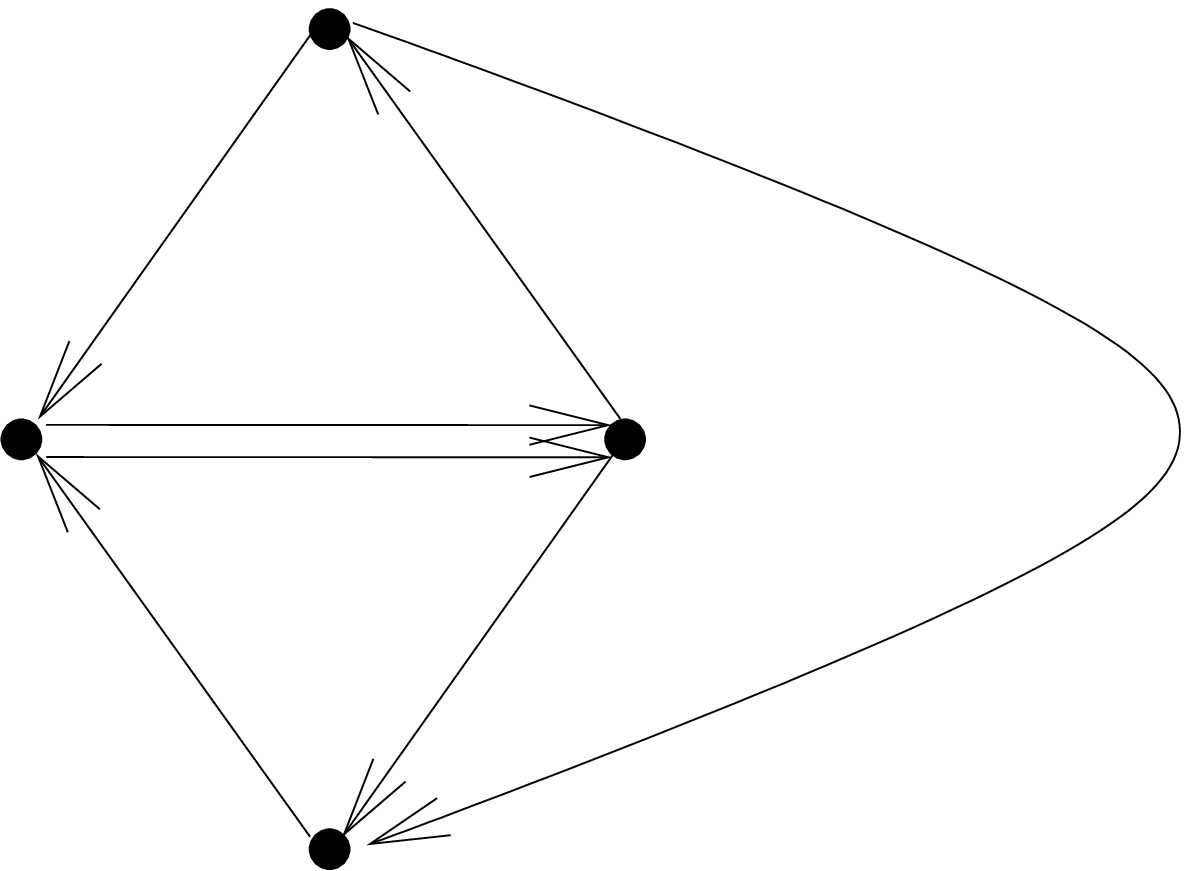,width=0.24\linewidth}
\put(-97,85){\small $e_1-e_2$}
\put(-150,38){\small $e_1-e_3$}
\put(-45,38){\small $e_1-e_3$}
\put(-97,-10){\small $e_2-e_3$}
\end{center}
      \caption{The unique quiver corresponding to triangulations of a torus with one boundary component, and its admissible quasi-Cartan companion}
\label{dread}
    \end{figure}

    Now suppose that  $Q'_c$ contains more than three vertices, in particular, it is non-oriented. By Prop.~\ref{non-or}, any vertex is connected to $Q'_C$ by an even number of arrows. Combining this with the fact that a valence of a vertex in a quiver originating from an unpunctured triangulated surface does not exceed $4$~\cite{FST}, we see that $k$ is connected to $Q'_c$ either by $2$ or by $4$ arrows.

First, suppose that $k$ is connected to $Q'_c$ by $2$ arrows. This cannot be a double arrow, otherwise  $\t Q'_c$ would be mutation-infinite.  If $k$ is connected to non-neighboring vertices of $Q'_c$, then $\t Q'_c$ contains $3$ cycles of length at least $4$ (and thus non-oriented). This contradicts~\cite[Proposition~2.1]{Se1}, according to which a mutation-finite quiver with at least two non-oriented cycles must contain an oriented cycle. Thus, $k$ is connected to two neighboring vertices of $Q'_c$ and forms with them an oriented cycle of length $3$. Then $\mu_k(\t Q'_c)$ is an affine subquiver of $Q$, and thus the restriction of $\mu_k(A)$ to $\t Q'_c$ is admissible by~\cite{Se}.

   \begin{figure}[!h]
      \begin{center}
        \epsfig{file=./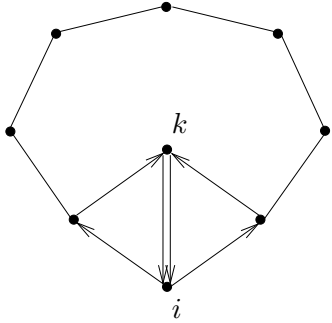,width=0.27\linewidth}
\put(-60,60){\small $k$}
\put(-60,-10){\small $i$}
      \end{center}
      \caption{A quiver $\t Q'_c$ for $k$ being incident to a double arrow. ``Non-oriented'' arrows can be oriented in any way}
\label{wheel1}
    \end{figure}

Suppose now that $k$ is connected to $Q'_c$ by $4$ arrows. If $k$ is connected to some vertex (say, $i$) by a double arrow, then $k$ and $i$ must form oriented cycles with both neighbors of $i$ in $Q'_c$, otherwise the subquiver formed by four vertices $k$, $i$ and two neighbors of $i$ is mutation-infinite. Therefore, $\t Q'_c$ has the form shown in Fig.~\ref{wheel1}. A short  explicit calculation shows that if the restriction of $A$ to $\mu_k(\t Q'_c)$ is admissible, then the restriction of $\mu_k(A)$ to $\t Q'_c$ is also admissible.

So, we can now assume that $k$ is connected to $4$ distinct vertices of  $Q'_c$.   Let us look at possible structure of the quiver  $\t Q'_c$.

By~\cite[Proposition~2.1]{Se1} mentioned above, $k$ must belong to at least one oriented cycle (which is of length $3$ since there are no punctures). Suppose first that $k$ belongs to one oriented cycle only. Then  $\t Q'_c$ contains $3$ non-oriented cycles, see Fig.~\ref{1cycle}. Removing any of the two vertices  of the oriented triangle belonging to $\t Q'_c$ we obtain a subquiver with at least two non-oriented cycles and no oriented cycles. By~\cite[Proposition~2.1]{Se1}, this subquiver is mutation-infinite.

 \begin{figure}[!h]
      \begin{center}
        \epsfig{file=./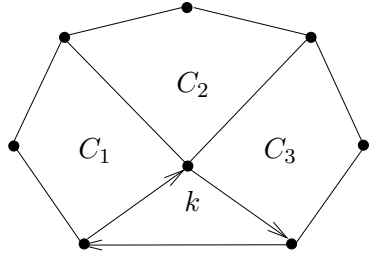,width=0.30\linewidth}
\put(-70,15){\small $k$}
\put(-110,35){\small $C_1$}
\put(-73,60){\small $C_2$}
\put(-40,35){\small $C_3$}
      \end{center}
      \caption{A quiver $\t Q'_c$ for $k$ belonging to exactly one oriented cycle. Cycles $C_i$ are non-oriented and can be of any lengths}
\label{1cycle}
    \end{figure}

Therefore, we can now assume that $k$ belongs to $2$ oriented cycles. If these cycles have a common arrow, then they form a subquiver on $4$ vertices composed of two oriented triangles sharing an edge, which corresponds to a self-folded triangle and thus can never show up in a quiver of unpunctured surface (see~\cite{Gu1,Gu2}). Thus, $\t Q'_c$ consists of two oriented cycles and two non-oriented cycles ``between'' them, see Fig.~\ref{2cycles}. Mutating this quiver in $k$, we obtain one of the two quivers shown in Fig.~\ref{2cycles}. Now an easy computation shows that an admissible companion to the latter mutates to an admissible companion to the former, which completes the proof.

  \begin{figure}[!h]
      \begin{center}
        \epsfig{file=./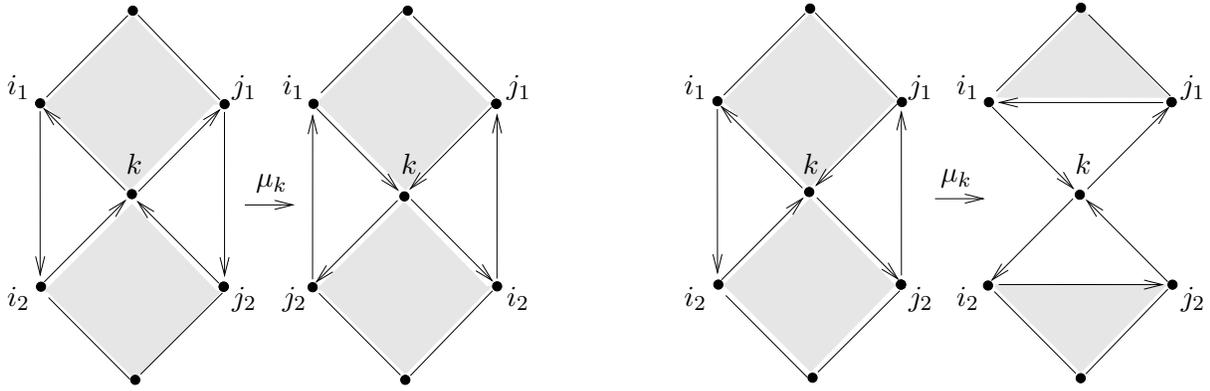,width=0.95\linewidth}
\put(-398,80){\small $k$}
\put(-443,110){\small $i_1$}
\put(-443,30){\small $i_2$}
\put(-358,110){\small $j_1$}
\put(-358,30){\small $j_2$}
\put(-349,75){\small $\mu_k$}
\put(-294,80){\small $k$}
\put(-339,110){\small $i_1$}
\put(-339,30){\small $j_2$}
\put(-254,110){\small $j_1$}
\put(-254,30){\small $i_2$}
\put(-142,80){\small $k$}
\put(-187,110){\small $i_1$}
\put(-187,30){\small $i_2$}
\put(-102,110){\small $j_1$}
\put(-102,30){\small $j_2$}
\put(-90,77){\small $\mu_k$}
\put(-39,80){\small $k$}
\put(-84,110){\small $i_1$}
\put(-84,30){\small $i_2$}
\put(1,110){\small $j_1$}
\put(1,30){\small $j_2$}

      \end{center}
      \caption{Quivers $\t Q'_c$ for $k$ belonging to two oriented cycles and their mutations. Shaded cycles  are non-oriented and can be of any lengths}
\label{2cycles}
    \end{figure}

    \end{proof}

We can now complete the proof of Prop.~\ref{adm} (and thus of Theorem~\ref{c-ind}).

 \begin{proof}[Proof of Proposition~\ref{adm}]   
  
   Let $Q$ be a quiver constructed from a triangulation of an unpunctured surface, and let $A$ be its admissible quasi-Cartan companion. We need to prove that the companion $\mu_k(A)$ of the quiver $Q'=\mu_k(Q)$ is also admissible. As in the proof of Lemma~\ref{ell-inv}, we need  to prove that the admissibility condition holds for every chordless cycle $Q'_c$ in $Q'$.

  Since $Q'_c$ is a cycle, it is a quiver of affine type $\t A_{m-1}$ or finite type $A_m$ (type $D_m$ for $m>3$ is excluded as there are no punctures), where $m$ is the number of vertices in  $Q'_c$. In the former case it corresponds to a triangulated annulus, and in the latter case to a triangulated polygon.  We now consider a subquiver $\t Q'_c= Q'_c\cup\{k\}$ of $Q'$ and determine how can the corresponding to $\t Q'_c$ surface look like.

  Observe that $Q'_c$ is a subquiver of $\t Q'_c$ obtained by removing one vertex. In the language of surfaces, the operation of removing one vertex from a quiver is equivalent to cutting the surface along the corresponding edge of a triangulation. Thus, the surface for $\t Q'_c$ can be obtained from the surface for $Q'_c$ by gluing two segments of the boundary (without creating any punctures), or by attaching to the surface for $Q'_c$ a single triangle along one boundary edge.

  By gluing two segments of boundary of a polygon or by attaching a triangle to it we can obtain either an annulus or a polygon again (closed sphere is excluded), so $\t Q'_c$ is again of affine or finite type. By gluing two segments of boundary of an annulus we can obtain either a $3$-holed sphere (if the segments belong to the same boundary component) or a torus with one boundary component (if the segments belong to distinct components). Similarly, by attaching a triangle to an annulus along one edge we can obtain an annulus only. Thus $\t Q'_c$ is either of affine type, or of one of the types covered by Lemma~\ref{ell-inv}. 

  We now observe that the restriction $A_c$ of $A$ to the subquiver $\mu_k(\t Q'_c)$ of $Q$ is an admissible quasi-Cartan companion. Since the quiver $\mu_k(\t Q'_c)$ is either of finite type, or of affine type, or of one of the types covered by Lemma~\ref{ell-inv}, we deduce that $\mu_k(A_c)$ is also an admissible companion of  $\t Q'_c$ either by~\cite{Se} or by Lemma~\ref{ell-inv}, which completes the proof.

\end{proof}

  \subsection{Constructing an admissible quasi-Cartan companion by a triangulation}
  \label{qCC-c}
  Given an arbitrary triangulation $T$ of an unpunctured surface (or, equivalently, its quiver $Q$), Proposition~\ref{adm} provides a way to construct an admissible quasi-Cartan companion by $T$ without making use of any mutations.

  Let $S$ be a surface of genus $g$ with $b$ boundary components, and let $T$ be any its triangulation. As before, we denote by $n$ the number of interior edges of $T$ (it depends on the number of marked points), we may assume $n\ge 3$.  Cut along some edges of $T$ to obtain a topological disk. Euler characteristics considerations imply that the number of cuts is equal to $2g+b-1$.

  As in Section~\ref{semi-def}, index all the triangles of $T$ (there are $n-2g-b+2$ of them), and consider a Euclidean vector space of dimension $n-2g-b+2$ with orthonormal basis $e_i$. Assign to every edge of $T$ on the disk the vector $e_i+e_j$ if the edge belongs to triangles $i$ and $j$.

  Now, consider any other edge of $T$, assume it belongs to triangles $i$ and $j$. If we glue the disk along this edge only, we obtain an annulus. Therefore, this edge belongs in $Q$ either to an oriented triangle of length $3$ (if triangles $i$ and $j$ have a common edge in the disk), or to non-oriented cycle (otherwise). In the former case the admissibility condition implies that the inner product of the corresponding vector with the two other in the cycle must be positive, and thus we can take a vector $e_i+e_j$ without loss of generality. In the latter case the admissibility condition implies that we must take either $e_i+e_j$ (if the length of the cycle is even) or $e_i-e_j$  (if the length of the cycle is odd).

  Therefore, we have assigned a vector to every edge of a triangulation $T$ and obtained a semi-positive quasi-Cartan companion $A$.

  \begin{cor}
    \label{alg}
    $A$ is an admissible quasi-Cartan companion of $Q$.

    \end{cor}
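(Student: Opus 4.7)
The plan is to check that $A$ is a positive semi-definite quasi-Cartan companion of $Q$ and then establish the admissibility condition on every chordless cycle. The first point is immediate: by construction each assigned vector has the form $\pm e_i\pm e_j$ with $\{e_k\}$ an orthonormal basis of a Euclidean space, so $a_{ii}=2$, $|a_{ij}|\in\{0,1\}$ matches $|b_{ij}|$, and the Gram matrix of a family of Euclidean vectors is positive semi-definite. Since $S$ is unpunctured, every chordless cycle of $Q$ is either an oriented $3$-cycle coming from a triangle $T_\ell$ of $T$, or a non-oriented cycle of length $m\ge 4$ coming from an essential embedded annulus in $S$. For an oriented $3$-cycle, write the vectors on the three edges of $T_\ell$ as $v_k=\epsilon_k e_\ell+\eta_k e_{j_k}$ with $\epsilon_k,\eta_k\in\{\pm 1\}$; the three ``other'' triangles $T_{j_k}$ are pairwise distinct (no self-folded configuration arises on an unpunctured surface), so $a_{pq}=\epsilon_p\epsilon_q$ and the cyclic product $a_{12}a_{23}a_{31}=(\epsilon_1\epsilon_2\epsilon_3)^2=+1$ holds unconditionally; hence $\prod(-a_{pq})=-1<0$, as required.

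For a non-oriented cycle $C$ of length $m$, where the $k$-th edge-vector has the form $v_k=\alpha_k e_{i_{k-1}}+\beta_k e_{i_k}$ with $\alpha_k,\beta_k\in\{\pm 1\}$, a telescoping calculation yields $\prod_k\langle v_k,v_{k+1}\rangle=\prod_k\alpha_k\beta_k$. According to the three cases of the construction, $\alpha_k\beta_k=+1$ when $e_k$ is either a disk edge or a cut edge whose one-cut-edge cycle is even, and $\alpha_k\beta_k=-1$ when $e_k$ is a cut edge with odd one-cut-edge cycle; the cyclic product therefore equals $(-1)^{r(C)}$, where $r(C)$ counts odd cut edges on $C$. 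Admissibility then reduces to the parity identity $r(C)\equiv m\pmod 2$.

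This parity identity is the main obstacle. I would approach it by combining Proposition~\ref{adm} with the uniqueness of admissible quasi-Cartan companions up to simultaneous sign changes of rows and columns (proved by Seven). Proposition~\ref{adm} supplies an admissible companion $A^*$ of $Q$, and since simultaneous sign changes preserve every cyclic product, admissibility is a property of the orbit of $A$ under such changes. By design, $A$ satisfies the admissibility condition on every one-cut-edge non-oriented cycle (the sign on each cut edge was chosen precisely to enforce it) and, as shown above, on every oriented $3$-cycle. If these two families of cycles generate the cycle space of the underlying graph of $Q$ modulo coboundaries, then $A$ and $A^*$ agree on every chordless cycle's cyclic product, so $A$ lies in the admissible orbit. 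The remaining generation claim should follow from a direct topological argument expressing the core loop of an arbitrary essential annulus $A_C\subset S$ as a $\mathbf{Z}/2$-combination of one-cut-edge loops, from which the parity identity $r(C)\equiv m\pmod 2$ drops out by linearity.
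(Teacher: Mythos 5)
Your proposal is correct and ultimately takes the same route as the paper: the published proof likewise rests on the observation that the sign choices in the construction of Section~\ref{qCC-c} are forced by the admissibility condition (so that any admissible companion must agree with $A$ up to simultaneous sign changes of rows and columns), combined with the existence of an admissible companion supplied by Proposition~\ref{adm}. The one step you flag as outstanding --- that the oriented $3$-cycles together with the one-cut-edge core cycles determine the sign class on the whole cycle space --- is exactly what the paper compresses into the assertion that ``the choice of signs in the construction was unique (up to the change of the sign of any of the vectors)'', so your write-up is, if anything, more explicit than the source about where the real content of the argument lies.
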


    \begin{proof}
 The moduli of inner products of constructed vectors match the moduli of the entries of the matrix $B$, and the choice of signs in the construction was unique (up to the change of the sign of any of the vectors) to satisfy the admissibility condition. Therefore, if $A$ is not admissible, then $Q$ has no admissible companions. However, Proposition~\ref{adm} implies that there exists an admissible quasi-Cartan companion of $Q$, which completes the proof.  

      \end{proof}

      \begin{remark}
        \label{a2}
Note that although the vectors we constructed have the form $e_i\pm e_j$, all of them belong to a finite root system of type $A_{n-2g-b+1}$ spanned by the vectors corresponding to the interior edges of the triangulation of the disk.
        \end{remark}

\subsection{Admissible companions and reflection groups}

Finally, we would like to mention a geometric interpretation of the admissibility condition. 

\begin{prop}
  \label{lin-ind}
  Let $Q$ be a quiver of affine type $\tilde A_{p,q}$, let $A$ be a quasi-Cartan companion with companion basis ${\bf u}=\{u_1,\dots,u_{p+q}\}$. For any subquiver $Q'$ of $Q$ denote by $W(A,Q')$ the group generated by reflections in vectors of $\bf u$ assigned to vertices of $Q'$. Then $A$ is admissible if and only if for any $Q'\subset Q$ of affine or finite type the group $W(A,Q')$ is isomorphic to an affine (respectively, finite) Weyl group. 

\end{prop}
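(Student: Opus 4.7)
My plan is to control the group $W(A,Q')$ via the signature of $A|_{Q'}$ and to exploit a gauge-invariance principle. Since $r_u = r_{-u}$, the group $W(A,Q')$ depends only on the gauge class of the companion basis, i.e., on the equivalence class of $A$ under simultaneous sign changes of row $i$ and column $i$; such changes conjugate $A$ by a diagonal $\pm 1$ matrix and therefore preserve its signature. On any chordless cycle $C$, a single sign flip of $u_i$ changes precisely the two cycle edges incident to $i$, so the product $\prod_i \sgn(a_{i,i+1})$ along the cycle is the unique gauge invariant. I would first observe that admissibility is precisely the condition that this invariant take its \emph{standard} value, equivalently that after gauging $A|_C$ can be brought to the usual Cartan-type matrix of the corresponding Dynkin diagram.

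Next, I would classify the chordless cycles of $Q$: by Proposition~\ref{subd of aff} and the absence of punctures, every such cycle is either (a) a non-oriented simply-laced cycle of length $m$ (type $\t A_{k,m-k}$, affine), (b) a simply-laced oriented $3$-cycle (mutation class $A_3$, finite), or (c) an oriented $3$-cycle with one double arrow (type $\t A_{2,1}$, affine); any non-cyclic subquiver is a disjoint union of type-$A$ trees. For the forward direction, I would take $A$ admissible, gauge $A|_{Q'}$ on each subquiver to the standard Cartan matrix of the appropriate Dynkin type, and conclude from the standard theory of Coxeter systems that the resulting form is positive definite (trees and case~(b)) or positive semi-definite of corank one (cases~(a) and~(c)), with $W(A,Q')$ the associated finite or affine Weyl group acting on $V \oplus V_0^*$.

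For the converse, I would argue by contraposition. Assuming $A$ is not admissible, I exhibit a chordless cycle $C$ where the signature of $A|_C$ switches relative to the admissible case, forcing $W(A,C)$ to be of the wrong type. In case~(a), gauging puts $A|_C$ in the form with exactly one cycle off-diagonal equal to $+1$ and the remaining $m-1$ equal to $-1$; the explicit model $u_i = e_i - e_{i+1}$ for $i < m$ together with $u_m = e_1 + e_m$ in $\mathbf{R}^m$ realizes this Gram matrix via linearly independent vectors in Euclidean space (the transition matrix has determinant $\pm 2$), so $A|_C$ is positive definite and $W(A,C)$ is a finite subgroup of $O(m)$, rather than the infinite affine Weyl group $W(\t A_{m-1})$. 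In cases~(b) and~(c), short $3 \times 3$ determinant computations give $\det A|_C = 0$ (positive semi-definite of corank one) and $\det A|_C < 0$ (indefinite) respectively, so $W(A,C)$ is either an infinite group where a finite one is required, or has no realization as any affine Weyl group.

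The main obstacle will be the uniform positive-definiteness claim in case~(a) for all $m \ge 3$. The explicit Euclidean model above settles this in one line and avoids any inductive or eigenvalue analysis; the other two cases reduce to small $3 \times 3$ determinants.
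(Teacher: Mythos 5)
Your converse direction is essentially the paper's own argument and is correct: you reduce to the three kinds of chordless cycles, pin down the non-admissible restriction up to gauge, and read off that the group has the wrong type (finite of type $D_m$ where an affine group is required, affine $\widetilde{A}_2$ where a finite one is required, and a hyperbolic $(\pi/3,\pi/3,0)$ triangle group in the $(1,1,2)$ case); the explicit $e_i-e_{i+1}$, $e_1+e_m$ model is a clean way to get positive definiteness in case (a). Two small points to add there: you need the linear independence built into the definition of a companion basis so that in case (b) the radical vector $u_1+u_2+u_3$ is nonzero and the group is genuinely infinite, and in case (c) you should say why the group is not \emph{abstractly} isomorphic to an affine Weyl group (e.g.\ the $(\pi/3,\pi/3,0)$ group is not virtually abelian).

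The genuine gap is in the forward direction. First, a finite- or affine-type subquiver $Q'$ is in general neither a tree nor a single chordless cycle, so your case list does not establish positive (semi-)definiteness of $A|_{Q'}$ for arbitrary $Q'$. Second, and more seriously, for any $Q'$ containing an oriented $3$-cycle the gauging claim fails: on such a cycle admissibility forces the product of the three signs to be $+$, while the all-negative (Dynkin Cartan) assignment has product $-$, and this product is exactly your gauge invariant; so $A|_{Q'}$ is \emph{not} gauge-equivalent to a Cartan matrix of Dynkin type. Even with the correct positive-definite Gram matrix (all off-diagonal entries $+1$ on the oriented triangle, determinant $4$), ``standard theory of Coxeter systems'' does not identify the group: the three reflections pairwise satisfy braid relations, so the associated Coxeter group is the \emph{infinite} affine group $\widetilde{A}_2$, whereas the actual group is $S_4=W(A_3)$ only on account of an extra non-Coxeter relation; the same issue occurs for the admissible $(1,1,2)$ triangle. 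To close the gap you must either (i) prove that admissibility on all chordless cycles forces $A|_{Q'}$ to be positive (semi-)definite for every finite (affine) subquiver and then invoke the lattice argument you already use in the converse (reflections in norm-$2$ vectors preserving the lattice $\sum\mathbb{Z}u_i$ and a positive definite, resp.\ corank-one semi-definite, form generate a finite, resp.\ affine, Weyl group), or (ii) follow the paper's route, which sidesteps all of this: by~\cite{Se} the admissible companion of an affine quiver is unique up to gauge and is transported by mutation, and by Remark~\ref{geometric realisation} mutation of the companion basis conjugates the generating reflections and hence preserves the group, so $W(A,Q)$ is identified with the group of the acyclic affine Dynkin representative and one then restricts to subquivers.
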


\begin{proof}
  Recall that, according to~\cite{Se}, any quiver of affine type has a unique admissible quasi-Cartan companion up to equivalence (corresponding to changing the signs of some of the vectors in the companion basis), and any mutation of an admissible companion of an affine quiver results in an admissible companion again. Thus, if $A$ is admissible, then it can be obtained by mutations from an admissible companion of an affine Dynkin diagram for which the isomorphism between the corresponding reflection group $W(A,Q)$ and $\tilde A_{p+q-1}$ is obvious. We then can use the claim from the proof of Proposition~\ref{ind} to show that $W(A,Q)$ is also isomorphic to $\tilde A_{p+q-1}$. Restricting this reasoning to any affine subquiver, we obtain ``only if'' statement.

  Conversely, assume that $A$ is not admissible. This implies that there is a chordless cycle for which the admissibility condition does not hold. There are three types of chordless cycles in quivers of type $\tilde A$: oriented cycles of length $3$ with weights $(1,1,1)$ or $(1,1,2)$, and non-oriented cycles with all weights equal to one. If the admissibility condition is broken for a non-oriented cycle $Q'$, then $W(A,Q')$ is a finite Weyl group of type $D$; if it  is broken for a cycle of type $(1,1,2)$, then the corresponding reflection group is isomorphic to a group generated by reflections in sides of a hyperbolic triangle with angles $(\pi/3,\pi/3,0)$. As both of these types of cycles are affine subquivers themselves, we see that in both cases we have an affine subquiver $Q'$ with the group $W(A,Q')$ not being an affine Weyl group. Finally, if the admissibility condition is broken for an oriented $3$-cycle with weights $(1,1,1)$, then the corresponding reflection group is isomorphic to an affine Weyl group $\tilde A_2$. As the cycle itself is a quiver of finite type $A_3$, we again come to a contradiction.

\end{proof} 

Combining  Propositions~\ref{lin-ind} and~\ref{adm} we obtain the following result.

\begin{cor}
\label{aff-adm}
Let the quiver $Q_0$ and a vector system ${\bf u} =\{u_i \}$ be those constructed in Section~\ref{triang-sp}. 
Then for any sequence of mutations $\mu$ and any subquiver $Q'\subset \mu(Q_0)$ of affine or finite type the group $W(\mu_k(A),Q')$ is isomorphic to an affine (respectively, finite) Weyl group.
    
\end{cor}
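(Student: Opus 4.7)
The plan is a short combination of Proposition~\ref{adm} and Proposition~\ref{lin-ind}. By Proposition~\ref{adm}, for any mutation sequence $\mu$ the matrix $\mu(A)$, where $A$ is the Gram matrix of the vector system $\mathbf{u}$ from Section~\ref{triang-sp}, is an admissible quasi-Cartan companion of $\mu(Q_0)$. I would then fix any induced subquiver $Q' \subset \mu(Q_0)$ of affine or finite type, and first note that $Q'$ must be of type $A_n$ or $\tilde A_{p,q}$: since $Q_0$ comes from an unpunctured surface, $D_n$ and $\tilde D_n$ subquivers would force a puncture, and no exceptional subquivers appear. I would also observe that the restriction of $\mu(A)$ to the vertices of $Q'$ is an admissible quasi-Cartan companion of $Q'$ in its own right, because every chordless cycle in an induced subquiver is chordless in the ambient quiver, so the cyclic sign condition is inherited verbatim from the ambient admissibility of $\mu(A)$.

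For $Q'$ of affine type $\tilde A_{p,q}$, I would apply Proposition~\ref{lin-ind} with the ambient affine quiver being $Q'$ itself (taking the subquiver there to be all of $Q'$) to conclude that $W(\mu(A), Q')$ is isomorphic to the affine Weyl group of type $\tilde A_{p+q-1}$. For $Q'$ of finite type $A_n$, admissibility is vacuous since $A_n$ has no cycles; the restricted form is positive definite, and after sign changes of the $u_i$ (which leave the generated reflection group unchanged) the companion becomes the standard Cartan matrix of $A_n$, whose reflection group is the finite Weyl group. I would handle the $A_n$ case directly rather than trying to embed it into a larger $\tilde A$-subquiver inside $\mu(Q_0)$, since such an extension need not exist (e.g.\ when $\mu(Q_0)$ is itself a disk quiver of type $A_n$, no affine cycle is available at all).

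The only slightly delicate step is the inheritance of admissibility by induced subquivers, but this is essentially a definitional observation: chordlessness of a cycle is preserved on passing to an induced subquiver, and the cyclic sign condition is local to each chordless cycle. All the real content has been invested in the proofs of Proposition~\ref{adm} and Proposition~\ref{lin-ind}, and the corollary is obtained by assembling them, so I do not expect a serious obstacle here.
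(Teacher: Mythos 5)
Your overall strategy -- restrict the admissible companion $\mu(A)$ supplied by Proposition~\ref{adm} to $Q'$, note that admissibility passes to induced subquivers because chordless cycles and the cyclic sign condition are inherited verbatim, and then invoke Proposition~\ref{lin-ind} -- is exactly the combination the paper has in mind, and your treatment of the affine case (applying Proposition~\ref{lin-ind} with the ambient quiver equal to $Q'$ itself) is correct, as is the observation that only types $A_n$ and $\widetilde A_{p,q}$ can occur for unpunctured surfaces.

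The finite-type case, however, contains a genuine error. A quiver in the mutation class of $A_n$ need not be acyclic: oriented $3$-cycles occur already for triangulations of a polygon with an internal triangle, so ``admissibility is vacuous since $A_n$ has no cycles'' is false for the subquivers $Q'$ you actually encounter. Worse, for an oriented triangle the admissibility condition forces $a_{12}a_{23}a_{31}>0$, and this is precisely the obstruction to making all off-diagonal entries negative by sign changes of the $u_i$ (multiplying the three sign conditions $\varepsilon_i\varepsilon_j a_{ij}<0$ gives $a_{12}a_{23}a_{31}<0$). So the restricted companion cannot in general be brought to the standard Cartan matrix of $A_n$ by sign changes alone, and your argument breaks down exactly on those $Q'$ containing oriented triangles. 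The repair is the same device used in the proof of Proposition~\ref{lin-ind} for the affine case: mutate $Q'$ to an acyclic representative; by~\cite{Se,S2} the admissible companion of a finite-type quiver is the mutation of the (sign-equivalent) Cartan matrix along that sequence, and each companion mutation replaces generators $r_i$ by $r_kr_ir_k$ or $r_i$, hence does not change the generated reflection group, which is therefore the finite Weyl group of type $A_n$. With that substitution the proof is complete.
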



\section*{Appendix: Calculations in the proof of Theorem~\ref{homo}}

We present below the calculations for the relations of types (R3) and (R5), which are required to prove Theorem~\ref{c-ind}.

We index the vertices as in Fig.~\ref{rel}(e), and assign the following vectors to the vertices:
$$u_1=e_2-e_3+\delta_1,\quad u_2=e_1-e_3,\quad u_3=e_2-e_3+\delta_2,\quad u_4=e_1-e_2,\quad u_5=e_1-e_4.$$
Here we assume that the vector space $V_+$ with basis $\{v_1,\dots,v_{n_+}\}$ is embedded in the vector space $V'_+$ of dimension $n_++1$ with orthonormal basis $\{e_i\}$ (and thus the whole space $V\oplus V_0^*$ is embedded in the space with basis $\{e_1,\dots,e_{n_++1},\delta_1,\dots,\delta_{n_0},\delta_1^*,\dots,\delta_{n_0}^*\}$), and all roots lying in $V_+$ are of the form $e_i-e_j$ (see Section~\ref{eawg} for the notation and further details). The action of the group $W$ on $V=V_+\oplus V_0$ can be naturally extended to an action on  $V'=V'_+\oplus V_0$. 

It is easy to see that reflections $s_1,\dots,s_5$ in the vectors $u_1,\dots,u_5$ may only act non-trivially on the $8$-dimensional subspace of $V'$ spanned by vectors $\{e_1,e_2,e_3,e_4,\delta_1,\delta_2,\delta_1^*,\delta_2^*\}$. Therefore, to verify the relations (R3) and (R5) we may write down the matrices of the reflections in the basis above and calculate the required products. The matrices of the reflections in this basis have the following form:

$$
s_1 = \begin{pmatrix}
  1& 0& 0& 0& 0& 0& 0& 0\\
  0& 0& 1& 0& 0& 0& -1& 0\\
  0& 1& 0& 0& 0& 0& 1& 0\\
  0& 0& 0& 1& 0& 0& 0& 0\\
  0& -1& 1& 0& 1& 0& -1& 0\\
  0& 0& 0& 0& 0& 1& 0& 0\\
  0& 0& 0& 0& 0& 0& 1& 0\\
  0& 0& 0& 0& 0& 0& 0& 1
\end{pmatrix}\qquad\qquad
s_2 = \begin{pmatrix}
  0& 0& 1& 0& 0& 0& 0& 0\\
  0& 1& 0& 0& 0& 0& 0& 0\\
  1& 0& 0& 0& 0& 0& 0& 0\\
  0& 0& 0& 1& 0& 0& 0& 0\\
  0& 0& 0& 0& 1& 0& 0& 0\\
  0& 0& 0& 0& 0& 1& 0& 0\\
  0& 0& 0& 0& 0& 0& 1& 0\\
  0& 0& 0& 0& 0& 0& 0& 1
\end{pmatrix}
$$
$$
s_3 = \begin{pmatrix}
  1& 0& 0& 0& 0& 0& 0& 0\\
  0& 0& 1& 0& 0& 0& 0& -1\\
  0& 1& 0& 0& 0& 0& 0& 1\\
  0& 0& 0& 1& 0& 0& 0& 0\\
  0& 0& 0& 0& 1& 0& 0& 0\\
  0& -1& 1& 0& 0& 1& 0& -1\\
  0& 0& 0& 0& 0& 0& 1& 0\\
  0& 0& 0& 0& 0& 0& 0& 1 
\end{pmatrix}
\qquad\qquad
s_4 = \begin{pmatrix}
  0& 1& 0& 0& 0& 0& 0& 0\\
  1& 0& 0& 0& 0& 0& 0& 0\\
  0& 0& 1& 0& 0& 0& 0& 0\\
  0& 0& 0& 1& 0& 0& 0& 0\\
  0& 0& 0& 0& 1& 0& 0& 0\\
  0& 0& 0& 0& 0& 1& 0& 0\\
  0& 0& 0& 0& 0& 0& 1& 0\\
  0& 0& 0& 0& 0& 0& 0& 1
\end{pmatrix}
$$
$$
s_5 = \begin{pmatrix}
  0& 0& 0& 1& 0& 0& 0& 0\\
  0& 1& 0& 0& 0& 0& 0& 0\\
  0& 0& 1& 0& 0& 0& 0& 0\\
  1& 0& 0& 0& 0& 0& 0& 0\\
  0& 0& 0& 0& 1& 0& 0& 0\\
  0& 0& 0& 0& 0& 1& 0& 0\\
  0& 0& 0& 0& 0& 0& 1& 0\\
  0& 0& 0& 0& 0& 0& 0& 1
\end{pmatrix}
$$

 Now a direct computation shows that both  $(s_1\ s_2s_3s_4s_3s_2)^3$ and $(s_1\ s_2s_3s_4s_5s_4s_3s_2)^2$ are identity matrices, which verifies relations (R5). Finally, to verify the relation (R3), we compute $(s_2\ s_4s_5s_4)^2$ which also turns out to be an identity matrix, as required.  

Note that the relations (R1) and (R2) can also be checked straightforwardly using the matrices provided above, and to verify (R4) we need to assign a different vector to the vertex $v_4$, say, $e_2-e_4$ (cf. Fig.~\ref{rel}(c)).

\end{document}